\documentclass{article}
\usepackage[affil-it]{authblk}
\usepackage{pdfsync}
\usepackage{amsmath,amssymb}
\usepackage{enumerate}
\usepackage{xspace}
\usepackage{pgfplots}
\usetikzlibrary{math}

\usepackage{boxedminipage}
\usepackage{algorithmicx}
\usepackage[ruled]{algorithm}
\usepackage{listings}
\usepackage{tabularx}
\usepackage{fullpage}
\usepackage[colorlinks,citecolor=cyan,linkcolor=magenta]{hyperref}
\usepackage{amsmath,amsthm,amssymb,amsfonts, bm, bbm, stmaryrd} 
\usepackage{graphics} 
\usepackage{subcaption} 
\usepackage{enumitem}
\usepackage{tikz-cd}
\usepackage{url}

\usepackage{mathtools}
\usepackage{tabularx}
\usepackage{array}
\usepackage{booktabs}
\usepackage{IEEEtrantools} 
\usepackage{mathabx}
\pdfstringdefDisableCommands{%
  \def\({}%
  \def\){}%
}

\usepackage{xcolor}

\usepackage[backend=biber,style=numeric,giveninits, maxbibnames=9]{biblatex} 
\usepackage{setspace}
\renewbibmacro{in:}{%
  \ifentrytype{article}{}{\printtext{\bibstring{in}\intitlepunct}}}
\addbibresource{l4_paper.bib}
\addbibresource{1side.bib}

\numberwithin{equation}{section}


\makeatletter
\def\BState{\State\hskip-\ALG@thistlm}
\makeatother

\graphicspath{ {images/} }

\usepackage{l4_macro}



\usepackage{mathrsfs}
\provideboolean{usemathrsfs}
\setboolean{usemathrsfs}{true}

\usepackage{hyperref}

\title{Duality based error control for the Signorini problem}

\author{Ben S. Ashby}
\affil{School of Mathematical \& Computer Sciences,\\ Heriot-Watt University, Edinburgh, UK.}

\author{Tristan Pryer}
\affil{Department of Mathematical Sciences, \\ University of Bath, Bath, UK.}

\date{}

\begin{document}

\maketitle

\begin{abstract}
  In this paper we study the a posteriori bounds for a conforming
  piecewise linear finite element approximation of the Signorini
  problem.  We prove new rigorous a posteriori estimates of residual
  type in $\leb{p}$, for $p \in (4,\infty)$ in two spatial
    dimensions. This new analysis treats
  the positive and negative parts of the discretisation error
  separately, requiring a novel sign- and bound-preserving
  interpolant, which is shown to have optimal approximation
  properties.  The estimates rely on the sharp dual stability results
  on the problem in $\sob{2}{(4 - \varepsilon)\slash 3}$
  for any $\varepsilon \ll 1$.  We summarise extensive
  numerical experiments aimed at testing the robustness of the
  estimator to validate the theory.
\end{abstract}

\section{Introduction}

{
Consider the open and convex set $\W \subset \reals^2$ with a
polygonal boundary $\partial \W$ and let $\vec n$ denote the outward
pointing normal to $\partial\W$. This paper focuses on a variational
inequality that emerges from the study of the following partial
differential equation (PDE):
\begin{equation}
  - \Delta u + u
  =
  f
  \text{ in } \W,  \label{eq:diff_eq}
\end{equation}
accompanied by the boundary conditions
\begin{equation}
  u
  \geq
  0, \quad
  \nabla u \cdot \vec n
  \geq
  0, \quad
  u
  \nabla u \cdot \vec n
  =
  0
  \text{ on } \partial \W. \label{eq:constraints}
\end{equation}
The specified conditions dictate that, for any segment $I \subset
\partial \W$ where $u>0$, it follows that $\nabla u \cdot \vec n = 0$
over $I$. This problem, along with its variants, finds applications in
diverse areas such as elasto-plasticity, fluid flow in porous media
\cite{brezzi1976finite}, and finance \cite{zhang2007adaptive}. Its
transformation into a variational inequality has facilitated extensive
analysis, notably in \cite{lions1967variational}, which established
the existence and uniqueness of solutions across a broad class of
problems.}

{Significant progress has been made in deriving a priori error
estimates for finite element approximations to variational
inequalities, achieving optimal order in energy norms
\cite{brezzi1977error, falk1974error,
  MoscoStrang:1974,scarpini1977error}, with notable advancements also
reported in \cite{hild2012improved} and \cite{nitsche1977convergence}
for estimates in $\leb{\infty}(\W)$. Despite these developments,
results in norms other than the energy norm remain relatively
rare. For instance, \cite{mosco1977error} offers an a priori bound in
$\leb{2}(\W)$, albeit in a single spatial dimension. The literature
also documents bounds on trace norms along the Signorini boundary
\cite{steinbach_trace}, with more recent studies, such as
\cite{christof1754finite}, extending bounds to several low-order
global norms.}

{The theory of a posteriori error estimation for elliptic variational
inequalities is significantly less well-developed than that for
elliptic boundary value problems. Nonetheless, there have been several
works on the Signorini problem as well as the related problem where
the obstacle is interior to the domain rather than on the boundary. In
\cite{hoppe1994adaptive}, hierarchical estimates are used under the
assumption that quadratic finite elements give better approximation
than linears. Error estimates for variational inequalities with
nontrivial boundary data are given in \cite{krause2015efficient},
estimates for an alternative form using Lagrange multipliers are given
in \cite{braess2005posteriori, weiss2010posteriori}, with results for
the discontinuous Galerkin method given in \cite{walloth2019reliable}.}

Rigorous a posteriori error bounds of residual type were derived for
the variational inequality of the first kind for the Signorini problem
in \cite{ChenObstacle2000} for piecewise linear finite elements. Here,
the authors give an upper bound in $\sobh{1}(\W)$. Several other works
have considered a posteriori control in energy norms, see
\cite{hild2005posteriori}.  Pointwise error control was established in
\cite{NochettoObstacle2003}. Recent regularity results for the problem
in which constraints only apply at the boundary
\cite{christof1754finite} show that better rates can be achieved in
this case, paving the way for a posteriori error estimates in lower
order norms.

We remark that error estimates have been derived for this problem
using the dual-weighted residual methodology (see for example
\cite{suttmeier2008numerical}) to estimate the error in a target
quantity.  In particular, estimates follow for the
$\leb{2}(\W)$-error, however the necessary stability of the dual
problem enters as an assumption.

A priori and a posteriori error estimates in low order norms are
established using duality arguments, and the resulting error bounds in
for example $\leb{2}(\W)$ are standard in the literature for boundary
value problems. The Aubin-Nitsche duality argument allows a priori and
a posteriori bounds in non-energy norms, see
\cite{ainsworth2011posteriori} and the references therein.
Application of this methodology has proved to be a challenge in the
case of variational inequalities since the dual problem can lack the
necessary regularity. Indeed, given smooth data, solutions of the
Signorini problem are smooth away from the boundary, but suffer from
singular points at the boundaries of the contact set, that is, the set
of points at which the boundary changes type. In this work, using the
ideas of an a priori argument recently given in
\cite{christof1754finite}, the approach taken in \cite{mosco1977error}
in which different dual problems are considered for the positive and
negative parts of the error is adapted for the a posteriori setting
and combined with a two-sided approximation that takes the place of
the usual Lagrange interpolant. The novelty in this work is that we
are able to prove rigorous a posteriori error control in
$\leb{p}(\W)$, for $p \in (4,\infty)$.  The results in
\cite{christof1754finite} suggest that the exponent $4$ is a critical
value in the sense that piecewise linear finite element approximations
achieve optimal a priori orders of convergence in
$\sob{1}{4-\varepsilon}(\W)$ for any $\varepsilon \ll 1$ but become
increasingly suboptimal in $\sob{1}{p}(\W)$ for $p>4$. Increasingly
suboptimal convergence is also observed in $\leb{p}(\W)$ for $p>4$. We
observe the same critical exponent for our a posteriori bounds,
however the assumptions we require to derive them are weaker than the
a priori setting. Furthermore, we are able to regain optimal
convergence rates through adaptive refinement techniques.

{The main aim of this paper to understand the theoretical
  framework in which duality arguments can be applied to obtain a
  posteriori error bounds in non-energy norms. To that end, we give a
  brief discussion of the underlying assumptions, and those used
  elsewhere in the literature. We defer technical discussions to the
  main body of the article, but give a brief overview here to put our
  results in context.
\begin{enumerate}
\item 
  A regularity assumption on the structure of the contact set of the
  solution to the Signorini problem (known as
  \hyperref[struct_of_contact_set]{Condition $(\mathbf{A})$}) is
  required to prove regularity of the dual problem, and indeed such an
  assumption is commonplace in the literature
  \cite{belgacem1999extension,belhachmi2003quadratic,
    ben2000numerical, christof1754finite,
    haslinger1996numerical,hueber2005optimal}.
  \hyperref[struct_of_contact_set]{Condition $(\mathbf{A})$} excludes
  the possibility of a fractal contact set. Technical details will be
  given in Remark \ref{struct_of_contact_set}, but we note that this
  assumption has been proven to hold in some cases, with the general
  case still an open problem. All original results in this article
  depend upon \hyperref[struct_of_contact_set]{Condition
    $(\mathbf{A})$}, and it alone is sufficient to prove $h$-robust a
  posteriori error control in $\leb{p}(\W)$ for $p \in (4,\infty)$ on
  the positive part of the error (Theorem \ref{thm:l4:pos_bound}).
\item 
  We utilise a dual problem to derive a bound for the negative part of
  the error that depends upon the \emph{discrete} solution. We make an
  assumption on the structure of its contact set, denoted
  \hyperref[def:condition_A_h]{Condition $(\mathbf a_h)$}, which, to
  our knowledge, cannot be proven a priori, but can be verified a
  posteriori once the discrete solution is obtained. Using this, we
  have a posteriori error control on the negative part of the error
  for any discrete solution which satisfies
  \hyperref[def:condition_A_h]{Condition $(\mathbf a_h)$} (Theorem
  \ref{thm:l4:neg_bound}). The bound has an $h$-dependent regularity
  constant which we are able to compute a posteriori (see Remark
  \ref{rem:constant_in_regularity_bound}), and in numerical
  experiments remains small.
\item 
  A strictly stronger assumption, known as
  \hyperref[def:condition_A_h_full]{Condition $(\mathbf{A}_h)$} was
  used in \cite{christof1754finite} to obtain a priori error bounds in
  the $\leb{4}(\W)$ norm using duality methods, implies that the
  $h$-dependent constant mentioned above is uniformly bounded as $h
  \to 0$. Under this stronger assumption, the bound for the negative
  part of the error would therefore be $h$-robust.
\end{enumerate}
}


The rest of the paper is set out as follows: In \S \ref{sec:setup} we
introduce notation, formalise the model problem and describe the
finite element approximation. In \S \ref{sec:apost} we state the key
estimates and main results. In \S \ref{sec:bilateral_approx} we
discuss bound-preserving interpolation from above and below. We
introduce a new interpolant which is shown to preserve bilateral
bounds have optimal approximation properties. The performance of our
error estimates are then tested numerically in \S \ref{sec:num}.

\section{Problem setup}
\label{sec:setup}

This section contains a summary of the problem at hand, as well as a
brief review of the current state of the art for regularity of the
variational inequality and its finite element approximation.  We note
that the article \cite{christof1754finite} considers the problem on
the unit square.  In this work, to make use of their results, we will
do the same, but remark that arguments can be extended to convex
polygonal domains.

In what follows, $\W\subseteq \reals^2$ is assumed to be the unit
square. Let $\w$ be a measurable subset of the domain $\W$, and let
$\leb{p}(\w)$ denote the Lebesgue spaces of $p$-th power Lebesgue
integrable functions over $\w$ with corresponding norm
$\Norm{\cdot}_{\leb{p}(\w)}$. We denote $\ltwop{\cdot}{\cdot}_\w$ as
the $\leb{2}$ inner product over $\w$ and use the convention of dropping
the subscript if $\w=\W$. Finally, we let $\sob{k}{p}(\w)$ be the
Sobolev space whose first $k$ weak derivatives are $\leb{p}(\w)$ and
let $\sobh{k}(\w):=\sob{k}{2}(\w)$ with corresponding norms
$\Norm{\cdot}_{\sob{k}{p}(\w)}$ and $\Norm{\cdot}_{\sobh{k}(\w)}$
respectively. For a real-valued function $g$, we define $g_+ =
\max(g,0)$ and $g_- = \min(g,0)$.

We will also henceforth consistently use the notation that $p$ and
  $q$ denote H\"older conjugates satisfying
  \begin{equation}
    \frac 1p + \frac 1q = 1,
  \end{equation}
  with $p \in (4,\infty)$ and $q \in (1, \frac 43)$.

We define the convex set of test and trial functions
\begin{equation}
  \cK = \{v \in H^1(\W) : v \geq 0 \text{ on } \partial \W \} \subset \sobh1(\W)
\end{equation}
that is appropriate to define the weak form of
(\ref{eq:diff_eq})--(\ref{eq:constraints}). It reads:
let $f \in \leb{s}(\W)$, for $s\geq 2$ and find $u
\in \cK$ such that

\begin{equation} \label{eq:weak_form}
  a(u, v-u)
  :=
  \ltwop{\nabla u}{\nabla v - \nabla u}
  +
  \ltwop{u}{v-u}
  \geq
  \ltwop{f}{v - u}
  =: l(v-u)
  \quad
  \forall v \in \cK.
\end{equation}

This leads us to state some well known properties of the problem
(\ref{eq:diff_eq})--(\ref{eq:constraints}) as well as some new
regularity results which require stronger assumptions on the problem data.

\begin{proposition}[Regularity of the primal problem \label{prop:primal_regularity}
{\cite[ Theorem 3.2.3.1]{Grisvard:1985}}]
  Given $f \in \leb{2}(\W)$ there exists a unique solution $u \in
  H^2(\W)$ to problem \eqref{eq:weak_form}. Moreover
  \begin{equation}
    \Norm{u}_{\sobh{2}(\W)} \leq C \Norm{f}_{\leb{2}(\W)}.
  \end{equation}
\end{proposition}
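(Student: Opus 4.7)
The plan is to split the proof into two stages: first, establish well-posedness in $\mathcal{K} \subset \sobh{1}(\W)$ via the Lions--Stampacchia theorem applied to the variational inequality \eqref{eq:weak_form}, and second, upgrade the regularity to $\sobh{2}(\W)$ by a penalisation argument combined with a Grisvard-style tangential difference quotient estimate that exploits the convexity of the unit square.

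For the first stage I would observe that the bilinear form $a(\cdot,\cdot)$ is continuous and coercive on $\sobh{1}(\W)$ (indeed $a(v,v) = \Norm{v}_{\sobh{1}(\W)}^2$), $l$ is a continuous linear functional on $\sobh{1}(\W)$ since $f \in \leb{2}(\W)$, and $\cK$ is a nonempty, closed, convex subset of $\sobh{1}(\W)$. The Lions--Stampacchia theorem then furnishes a unique $u \in \cK$ satisfying \eqref{eq:weak_form}, and testing with $v = 0$ and $v = 2u$ (both admissible in $\cK$) yields $\Norm{u}_{\sobh{1}(\W)} \lesssim \Norm{f}_{\leb{2}(\W)}$.

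For the $\sobh{2}(\W)$ bound I would introduce the Moreau--Yosida style penalisation: pick a smooth, monotone $\beta_\varepsilon \colon \reals \to \reals$ approximating $-\tfrac{1}{\varepsilon} s_-$, and solve the semilinear Robin problem
\begin{equation*}
  -\Delta u_\varepsilon + u_\varepsilon = f \text{ in } \W, \qquad
  \nabla u_\varepsilon \cdot \vec n + \beta_\varepsilon(u_\varepsilon) = 0 \text{ on } \partial \W.
\end{equation*}
Standard monotone-operator theory gives a unique $u_\varepsilon \in \sobh{1}(\W)$, and a straightforward energy estimate combined with the monotonicity of $\beta_\varepsilon$ shows $u_\varepsilon \to u$ strongly in $\sobh{1}(\W)$ and that $\ltwop{\beta_\varepsilon(u_\varepsilon)}{v}_{\partial\W}$ converges to the correct Signorini action. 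Uniform $\sobh{2}$ control of $u_\varepsilon$ is obtained on each open edge of $\partial\W$ by testing with tangential second difference quotients; because $\beta_\varepsilon$ is monotone, the boundary term has the right sign, and one extracts
\begin{equation*}
  \Norm{\partial_\tau \nabla u_\varepsilon}_{\leb{2}(\W)} \lesssim \Norm{f}_{\leb{2}(\W)}.
\end{equation*}
Together with $-\Delta u_\varepsilon = f - u_\varepsilon \in \leb{2}(\W)$ this controls all second derivatives.

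The main obstacle is the passage to full $\sobh{2}(\W)$ regularity across the corners of the square, where tangential difference quotients are not globally available and where Signorini solutions are known to develop singular behaviour. Here I would invoke Grisvard's identity for convex polygonal domains,
\begin{equation*}
  \Norm{D^2 w}_{\leb{2}(\W)}^2
  = \Norm{\Delta w}_{\leb{2}(\W)}^2
  + 2 \int_{\partial \W} \kappa \, |\nabla_\tau w|^2 \, \mathrm{d}s
  - 2 \int_{\partial \W} \partial_\tau w \, \partial_\tau(\nabla w \cdot \vec n) \, \mathrm{d}s,
\end{equation*}
valid for sufficiently smooth $w$, noting that on a convex polygon the curvature term is nonnegative (as a measure on corners) and that the remaining boundary integral can be controlled using the Signorini structure and the Robin relation for $u_\varepsilon$. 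This yields a uniform bound $\Norm{u_\varepsilon}_{\sobh{2}(\W)} \lesssim \Norm{f}_{\leb{2}(\W)}$; weak compactness and lower semi-continuity then pass the estimate to $u$. The delicate point throughout is ensuring that the penalty-generated boundary terms are signed correctly so that convexity of $\W$ absorbs the corner contributions, which is precisely the content of Theorem 3.2.3.1 in \cite{Grisvard:1985}.
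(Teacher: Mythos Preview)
The paper does not supply a proof of this proposition: it is stated with a direct citation to \cite[Theorem 3.2.3.1]{Grisvard:1985} and used as a black box throughout. There is therefore nothing in the paper to compare your argument against at the level of detail you have written.

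That said, your outline is a faithful sketch of the classical route to this result and is essentially the strategy Grisvard employs: Lions--Stampacchia for well-posedness in $\cK$, a monotone penalisation $\beta_\varepsilon$ to replace the unilateral constraint by a nonlinear Robin condition, and then the convex-polygon second-order identity to obtain a uniform $\sobh{2}$ bound on $u_\varepsilon$. The key observation you identify --- that the boundary term $\int_{\partial\W} \partial_\tau u_\varepsilon \, \partial_\tau(\nabla u_\varepsilon \cdot \vec n)\,\mathrm{d}s$ becomes $-\int_{\partial\W} \beta_\varepsilon'(u_\varepsilon)\,|\partial_\tau u_\varepsilon|^2\,\mathrm{d}s \leq 0$ via the Robin relation and monotonicity of $\beta_\varepsilon$ --- is exactly what makes the estimate close without any loss at the free boundary, and the convexity of $\W$ ensures the corner contributions have the right sign. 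One minor caveat: the tangential difference-quotient step you describe on each open edge is not really needed once you have the Grisvard identity, since the latter already controls all second derivatives directly from $\Norm{\Delta u_\varepsilon}_{\leb{2}(\W)}$; the two mechanisms are somewhat redundant in your sketch, but this does not affect correctness.
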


\begin{remark}
  There are variants of the Signorini problem for which this
  regularity result cannot be applied.  For example, if the boundary
  also contains portions where Neumann and\slash or Dirichlet
  conditions are applied, then the boundary conditions can not be
  expressed in the correct form to apply Proposition
  \ref{prop:primal_regularity}.  In this case we are not guaranteed $u
  \in \sobh{2}(\W)$.  Such problems do arise in practical
  applications.  Indeed they are studied in
  \cite{Blum2000,ashby2021adaptive}.
\end{remark}

\begin{definition}[Contact Set]
  Let $u$ be the unique solution of problem \eqref{eq:weak_form}. Since
  we have $u \in H^2(\W)$ from Proposition \ref{prop:primal_regularity},
  $u$ has a continuous representative on $\overline{\Omega}$ which
  we may use to define the contact set

  \[
  \cA
  :=
  \{ x \in \partial \Omega \mid u(\vec x) = 0\},
  \]
  with $\interior \cA$ the relative interior of this
  set.
\end{definition}

\begin{remark}[{\hyperref[struct_of_contact_set]{Condition $(\mathbf{A})$}}]\label{struct_of_contact_set}

  Stronger regularity for the solution of
    \eqref{eq:weak_form} follows from an assumption referred to as
    \hyperref[struct_of_contact_set]{Condition $(\mathbf{A})$} in \cite{christof1754finite}, namely that
    the contact set, $\cA$, consists of finitely many connected
    components and isolated points, and in particular the set of
    critical points (i.e. points contained in the boundary of the
    contact set) does not contain accumulation points.

 This means that the relative interior, $\interior \cA$, is a union of
 open intervals in $\partial \W$.  This is a common assumption in
 works relating to problems with a Signorini boundary, such as \cite{
   belgacem1999extension,belhachmi2003quadratic, ben2000numerical,
   christof1754finite, haslinger1996numerical,hueber2005optimal}, with
 a related assumption on the free boundary in a problem with an
 interior obstacle given in \cite{brezzi1977error,brezzi1978error}. We
 also mention \cite{drouet2015optimal}, where a priori bounds in
 $\sobh{1}(\W)$ are proved without this assumption, however their
 methods are not applicable to the analysis in this work.

 {Optimal error estimates in certain non-energy norms can be
   obtained for the Signorini problem \emph{if} an appropriate dual
   problem can be defined which has sufficient
   regularity. \hyperref[struct_of_contact_set]{Condition
     $(\mathbf{A})$} is a sufficient condition under which such a dual
   problem can be defined. It is not required for existence and
   uniqueness of the continuous problem, but is rather a regularity
   assumption. If \hyperref[struct_of_contact_set]{Condition
     $(\mathbf{A})$} is not satisfied, optimal a priori estimates in
   the energy norm will still hold \cite{drouet2015optimal}, but
   bounds in low order norms may be rendered suboptimal.}

 In order to violate this assumption, the solution to the Signorini
 problem would have to have infinitely many critical points.  The
 boundary of the contact set would therefore contain accumulation
 points, meaning that approaching such a point, the solution along the
 boundary would switch from contact to non-contact infinitely many
 times and over arbitrarily small distances, see Figure
 \ref{fig:condA}.

  We finally remark that this condition was recently shown to hold
    in \cite{apel2020regularity} in polygonal domains in some cases,
    although the verification in the general setting remains a
    challenging open problem.
\end{remark}

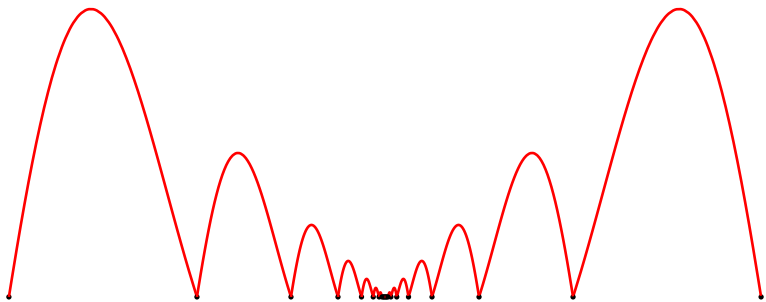
\begin{figure}[h!]
  \centering
  \begin{tikzpicture}[scale=10, line width=0.5pt]
    \tikzmath{
      real \xpos, \xposNext;
      int \n;
      \n = 10;
    }

    \foreach \i in {1,2,...,\n} {
      \tikzmath{
        \xpos = 1/(2^\i);
        \xposNext = 1/(2^(\i+1));
      }
      \fill[black] (-\xpos,0) circle (0.1pt);
      \fill[black] (\xpos,0) circle (0.1pt);
      \draw[domain=-\xpos:-\xposNext, smooth, red, line width=1pt] plot (\x, {abs(\x)*abs(sin(360*(\x+\xpos)/(2*\xposNext)))});
      \draw[domain=\xposNext:\xpos, smooth, red, line width=1pt] plot (\x, {abs(\x)*abs(sin(360*(\x-\xpos)/(2*\xposNext)))});
    }

  \end{tikzpicture}
  \caption{\label{fig:condA} An illustration to show how the trace of a function which violates \hyperref[struct_of_contact_set]{Condition $(\mathbf{A})$} might behave.}
\end{figure}

\begin{proposition}[Improved Regularity
    of the Primal Problem {\cite[Theorem 2.3]{christof1754finite}}]
  \label{prop:better_primal_regularity}
  Suppose that $u$ solves \eqref{eq:weak_form}, and
    that \hyperref[struct_of_contact_set]{Condition $(\mathbf{A})$} holds. If $f \in \leb{s}(\W)$ for $ 2 < s
  < 4$, then $u \in \sob 2s (\W)$. If $f\in \leb{s}(\W)$
    for $ 4 < s < \infty$ then $u$ admits a decomposition $u = u_R +
    u_S$ with $u_R \in \sob{2}{s}(\W)$ and $u_S \in
    \sobh{5/2-\epsilon}(\W)$.
\end{proposition}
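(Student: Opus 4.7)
The plan is to exploit the structure of the contact set under \hyperref[struct_of_contact_set]{Condition $(\mathbf{A})$}, namely that there are only finitely many critical points on $\partial \W$ at which the boundary type switches between Dirichlet ($u = 0$) and Neumann ($\nabla u \cdot \vec n = 0$). Away from those critical points (and from corners of $\W$), standard interior and boundary regularity for $-\Delta u + u = f$ with either pure Dirichlet or pure Neumann data on a smooth portion of the boundary immediately delivers $\sob{2}{s}$ bounds, so a partition-of-unity argument reduces everything to understanding $u$ locally near each critical point.

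First I would localise and flatten near a single critical point on a smooth edge of $\W$, reducing to a model problem on a half-disc with Dirichlet data on one radial segment and Neumann data on the other. The Kondrat'ev--Grisvard theory of mixed Dirichlet--Neumann boundary value problems then produces an asymptotic expansion of the solution in terms of angular eigenfunctions; solving $\phi'' + \lambda \phi = 0$ on $(0,\pi)$ with $\phi'(0) = 0$ and $\phi(\pi) = 0$ gives $\sqrt{\lambda} = k + \tfrac{1}{2}$, $k \geq 0$, and the candidate singular functions $r^{k+1/2}\cos((k+1/2)\theta)$.

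The key observation is that the Signorini complementarity conditions rule out the leading $k = 0$ term. If $c$ denotes the coefficient of $r^{1/2}\cos(\theta/2)$ in the local expansion, then on the non-contact side one has $u \sim c\, r^{1/2}$ and $u \geq 0$ forces $c \geq 0$, while on the contact side a direct computation gives $\nabla u \cdot \vec n \sim -(c/2)\, r^{-1/2}$, and the constraint $\nabla u \cdot \vec n \geq 0$ forces $c \leq 0$. Hence $c = 0$, and the leading admissible singularity is $r^{3/2}\cos(3\theta/2)$. The stated regularity now follows by direct Sobolev book-keeping: for $2 < s < 4$ every remaining singular function $r^{k+1/2}$, $k \geq 1$, satisfies $\nabla^2\bigl(r^{k+1/2}\bigr) \in \leb{s}_{\mathrm{loc}}$ near the origin, giving $u \in \sob{2}{s}(\W)$ globally; for $s > 4$ the $r^{3/2}$ term no longer lies in $\sob{2}{s}$, so defining $u_S$ as the sum over the finitely many critical points of smoothly cut-off copies of $r^{3/2}\cos(3\theta/2)$ with their extracted coefficients yields $u_S \in \sobh{5/2 - \varepsilon}(\W)$ (since $r^{3/2}$ is locally in $H^t$ for every $t < 5/2$), while $u_R := u - u_S$ retains only the milder singularities $r^{5/2}, r^{7/2},\ldots$ and so lies in $\sob{2}{s}(\W)$.

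The main obstacle is rigorously justifying the extraction of the $r^{1/2}\cos(\theta/2)$ coefficient and the two-sided sign argument: one must pair $u$ against a dual singular function on small arcs around each critical point and show that, in the limit, the variational inequality (not merely the linear mixed boundary conditions) controls the sign from both sides simultaneously — this is precisely where Signorini structure beyond the ambient mixed BVP enters and where \hyperref[struct_of_contact_set]{Condition $(\mathbf{A})$} is used, since accumulation of critical points would destroy the localisation. A secondary technical point is the case of a critical point coinciding with a corner of the polygonal $\W$, where the transition exponents interact with the corner exponent $\pi/\omega$; convexity of $\W$ gives $\omega < \pi$, which keeps the corner contribution at least as regular as $H^2$ and hence compatible with the decomposition above.
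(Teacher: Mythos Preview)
The paper does not actually prove this proposition; it is stated with a citation to \cite[Theorem 2.3]{christof1754finite} and no proof is given in the present article. Your sketch is therefore not competing against a proof in the paper but rather reconstructing the argument from the cited source.

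That said, your outline is correct and is essentially the strategy used in the literature (and consistent with the Grisvard-type machinery the paper itself invokes later in the proof of Proposition~\ref{pro:reg}). The three ingredients you identify --- localisation by partition of unity near the finitely many critical points guaranteed by \hyperref[struct_of_contact_set]{Condition $(\mathbf{A})$}, the Kondrat'ev--Grisvard expansion for the mixed Dirichlet--Neumann problem on a half-disc yielding singular exponents $k+\tfrac12$, and the two-sided sign argument from the Signorini complementarity conditions that kills the $r^{1/2}$ coefficient --- are exactly the right ones, and your Sobolev bookkeeping for the resulting $r^{3/2}$ singularity is accurate. Your identification of the main technical obstacle (rigorously extracting the stress intensity factor and justifying the sign argument from the variational \emph{inequality} rather than a linear boundary condition) is also apt; this is handled in \cite{christof1754finite} and \cite{apel2020regularity} via trace and extension arguments combined with the known $H^2$-regularity from Proposition~\ref{prop:primal_regularity}, which already places $u$ above the threshold needed for the coefficient extraction to make sense.

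One small point: your treatment of the corner case is slightly optimistic. On the unit square the corner angle is $\omega = \pi/2$, and a Dirichlet--Neumann transition \emph{at} a corner would produce a different set of exponents than the flat-boundary case; the cited works handle this by showing (or assuming via \hyperref[struct_of_contact_set]{Condition $(\mathbf{A})$} combined with the $H^2$-regularity) that critical points do not sit at corners, rather than by a direct exponent comparison.
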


\subsection*{Finite element discretisation}

We consider $\T{}$ to be a conforming triangulation of $\W$, namely,
$\T{}$ is a finite family of sets such that
\begin{enumerate}
\item $K\in\T{}$ implies $K$ is an open simplex or box,
\item for any $K,J\in\T{}$ we have that $\closure K\cap\closure J$ is a full
  lower-dimensional simplex (i.e., it is either $\emptyset$, a vertex,
  an edge or the whole of $\closure K$ and $\closure J$) of both
  $\closure K$ and $\closure J$ and
\item $\bigcup_{K\in\T{}}\closure K=\closure\W$.
\end{enumerate}
We will make the assumption that the triangulation is shape-regular. Further, we
define $h: \W\to \reals$ to be the {piecewise constant} \emph{meshsize
function} of $\T{}$ given by
\begin{equation}
  h(\vec{x}):=\max_{\closure K\ni \vec{x}}\text{diam} (K).
\end{equation}

We let $\E{}$ be the skeleton (set of common interfaces) of the
triangulation $\T{}$ and say $e\in\E$ if $e$ is on the interior of
$\W$ and $e\in \partial\W$ if $e$ lies on the boundary $\partial \W$
{and set $h_e$ to be the diameter of $e$.}  We let $\mathcal R^1(K)$
denote a space of piecewise linear (respectively bilinear) polynomials
over a triangle (respectively quadrilateral), that is, $\mathcal
R^1(K) \in \{\mathbb P^1(K)$, $\mathbb Q^1(K)\}$, and introduce the
\emph{finite element space}
\begin{equation}
  \fes := \{\phi \in \sobh1(\W) : \phi|_K \in\mathcal R^1(K)\}
\end{equation}
to be the usual space of continuous piecewise polynomial
functions. 
We define the \emph{element patch} of $K \in \T{}$ as 
\begin{equation*}
  \widehat K 
  :=
  \{J \in \T{} | \overline J \cap \overline K \neq \emptyset\}.
\end{equation*}
We also define a discrete analogue of $\cK$ as
\begin{equation}\label{def:k_h}
  \cK_h := \{v \in \fes : v \geq 0 \text{ on } \partial \W \}
  =
  \cK \cap \fes
  \subset \sobh1(\W)
  .
\end{equation}

We are now in a position to state the finite element approximation of
(\ref{eq:diff_eq})--(\ref{eq:constraints}) which is to find $U\in \cK_h$
such that
\begin{equation} \label{eq:discrete_form}
  a(U, \Phi - U)
  \geq
  l(\Phi - U) \quad \forall \Phi
  \in \cK_h,
\end{equation}
Note that we restrict our attention to either piecewise linear finite
element spaces on triangular meshes or piecewise bilinear spaces on
quadrilateral meshes.  For the sake of this work, we do not
  extend to higher degree polynomial approximation and consider only
  lowest order.

 \begin{proposition}[The discrete problem is well posed  {\cite[Thm 2.1]{kinderlehrer2000introduction}}]
   For $f\in\leb{2}(\W)$ and all $h >0$ there exists a
   unique solution $U$ of \eqref{eq:discrete_form}.
 \end{proposition}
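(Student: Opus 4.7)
The plan is to invoke the classical Lions--Stampacchia theorem on the closed convex set $\cK_h \subset \fes$. First I would check the hypotheses: since $\fes$ is a finite-dimensional subspace of $\sobh{1}(\W)$, inheriting the $\sobh{1}$-norm makes it a Hilbert space; the set $\cK_h = \cK \cap \fes$ is the intersection of the closed convex set $\cK \subset \sobh{1}(\W)$ with the closed subspace $\fes$, hence itself closed and convex, and it is nonempty because the zero function belongs to $\cK_h$.

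Next I would verify the properties of $a$ and $l$ required by the Lions--Stampacchia framework. Continuity of $a$ follows by Cauchy--Schwarz from its definition, and coercivity is immediate because $a(v,v) = \Norm{\nabla v}_{\leb{2}(\W)}^2 + \Norm{v}_{\leb{2}(\W)}^2 = \Norm{v}_{\sobh{1}(\W)}^2$, so $a$ is $\sobh{1}$-coercive with constant one. For the right-hand side, $l(v) = \ltwop{f}{v}$ is continuous on $\sobh{1}(\W)$ whenever $f \in \leb{2}(\W)$ (indeed for any $f \in \leb{s}(\W)$ with $s \geq 2$ via H\"older's inequality, since $\W$ is bounded in two dimensions).

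With these ingredients, the Lions--Stampacchia theorem (equivalently, \cite[Thm 2.1]{kinderlehrer2000introduction}) yields a unique $U \in \cK_h$ satisfying \eqref{eq:discrete_form}. Alternatively, since $a$ is symmetric, the variational inequality is the first-order optimality condition for the strictly convex quadratic energy
\begin{equation*}
  J(v) := \tfrac12 a(v,v) - l(v)
\end{equation*}
minimised over $\cK_h$; in the finite-dimensional space $\fes$ this minimisation problem has a unique minimiser by standard arguments (continuity, coercivity, and strict convexity of $J$ together with closedness and convexity of $\cK_h$), giving an independent route to the same conclusion.

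There is no real obstacle here: the result is a direct application of a classical abstract theorem, and the only thing to be a little careful about is that $\cK_h = \cK \cap \fes$ is indeed the correct discrete constraint set (so that admissibility of the discrete problem reduces to inheriting the constraint from the continuous one), which is exactly how $\cK_h$ was defined in \eqref{def:k_h}.
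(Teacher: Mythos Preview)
Your proposal is correct and is exactly the standard argument behind the cited result: the paper itself gives no proof for this proposition but simply attributes it to \cite[Thm~2.1]{kinderlehrer2000introduction}, which is precisely the Lions--Stampacchia theorem you invoke. The hypothesis checks you outline (nonempty closed convex $\cK_h$, continuity and coercivity of $a$, continuity of $l$) are the right ones, so there is nothing to add.
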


 Under slightly stronger assumptions, namely that $f$
   is essentially bounded and a technical assumption on the discrete
   solution $U$, which we detail below, one can prove a priori estimates in $\leb{4}(\W)$
   \cite{christof1754finite}. We note that this result is included for
   completeness, and that we do not make these additional
   assumptions.
   \begin{definition}[Discrete contact Set]
    The discrete contact set is defined analogously to the contact
    set. With $U$ the solution to \eqref{eq:discrete_form}, we define
    \[
    \cA_U
    :=
    \{ x \in \partial \Omega \mid U(\vec x) = 0\}.
    \]
  \end{definition}

   \begin{definition}[{\hyperref[def:condition_A_h_full]{Condition $(\mathbf{A}_h)$}} \cite{christof1754finite}]\label{def:condition_A_h_full}
    We say that \hyperref[def:condition_A_h_full]{Condition $(\mathbf{A}_h)$} holds if there exist points $d_i$ lying on $\partial \W$ and numbers $\delta_i >0$ such that the intersections of the open balls $B_{\delta_i}(d_i)$ and the boundary have non-zero distance from each other, and from the corners of the domain, such that the following holds for all sufficiently small $h > 0$.
    \begin{enumerate}
      \item
      The sets $B_{\delta_i}(d_i) \cap \partial \W$ cover the boundary of the discrete contact set, and such that each $B_{\delta_i}(d_i) \cap \partial \W$ contains precisely one element of the boundary of the discrete contact set.
      \item
      Every connected component of $\cA_U$ has a non-empty interior.
    \end{enumerate}
    \end{definition}

   We now summarise the range of a priori results and the main ideas used to derive them
   in \cite{christof1754finite} to illustrate the interplay of primal and dual regularity.
   
   \begin{remark}[A priori error bounds {\cite[Propositions 5.5 and 5.7]{christof1754finite}}]
     \label{rem:apriori}
     Let $u$ solve \eqref{eq:weak_form} for some $f \in \leb{p}(\W)$ with
   $p\in (4, \infty)$. Let $R : \sobh 1(\W) \to \fes$ denote the Ritz
   projector and let $U \in \fes$ be the solution of
   \eqref{eq:discrete_form}. Then for all $\varepsilon \in (0,1)$ there
   exists $C>0$ such that the superconvergence result
   \begin{equation}\label{eq:superclose}
     \Norm{R(u) - U}_{\sobh1(\W)} \leq C h^{\tfrac 32 - \tfrac 2p -\varepsilon} 
   \end{equation}
   holds.
   The authors additionally assume that $f \in \leb{\infty}(\W)$.
   With this assumption, using inverse estimates and choosing arbitrarily large $p$, the result \eqref{eq:superclose} provides optimal control of $R(u) - U$ in
   the $\sob{1}{s}(\W)$ norm for $s<4$, i.e.,
   \begin{equation}\label{eq:superclose_2}
     \Norm{R(u) - U}_{\sob{1}{s}(\W)} \leq C h.
   \end{equation}
   Using an error bound in $\sob{1}{\infty}(\W)$, which the authors also prove, a bound in $\sob{1}{4}(\W)$ is obtained from equation \eqref{eq:superclose_2} of optimal order up to arbitrarily small $\varepsilon$, namely
   \begin{equation}
     \Norm{R(u) - U}_{\sob{1}{4}(\W)} \leq C h^{1-\varepsilon}.
   \end{equation}
   This result is combined with classical results on approximability of the Ritz projector to obtain
   \begin{equation}\label{eq:apriori_1_4}
     \Norm{u - U}_{\sob{1}{4}(\W)} \leq C h^{1-\varepsilon}.
   \end{equation}
   The a priori result \eqref{eq:apriori_1_4} is a key ingredient in an Aubin-Nitsche duality
   argument which uses a H\"older inequality to bound in the strongest norm possible for an appropriately defined dual solution, which does not have $\sobh{2}(\W)$ regularity. 
   The bound \eqref{eq:apriori_1_4} is able to compensate for this and deliver optimal rates (up to an arbitrarily small positive number $\varepsilon$):
    \begin{equation}
      \Norm{(u-U)_+}_{\leb{4}(\Omega)}
      \leq
      Ch^{2 - \varepsilon}.
    \end{equation}
    If we additionally assume that \hyperref[def:condition_A_h_full]{Condition $(\mathbf{A}_h)$} holds, then we also have
    \begin{equation}
      \Norm{(u-U)_-}_{\leb{4}(\Omega)}
      \leq
      Ch^{2 - \varepsilon},
    \end{equation}
    and therefore achieve full a priori error control in $\leb{4}(\W)$.  

     Note that convergence rates in $\sob{1}{s}(\W)$ for $s>4$ are suboptimal, and that while the argument above can be modified to include stronger norms at a suboptimal rate, this suboptimality carries through to the $\leb{p}(\W)$ estimate, i.e.,
    \begin{equation}
      \label{eq:subopt}
      \Norm{u-U}_{\leb{p}(\Omega)}
      \leq
      Ch^{\tfrac 32 + \tfrac 2p - \varepsilon}.
    \end{equation}
     
   \end{remark}

\section{A posteriori error control}
\label{sec:apost}

In this section we state and prove the main result of this work, a
rigorous a posteriori bound in $\leb{s}(\W)$ for $s > 4$
for the problem \eqref{eq:diff_eq}.  To begin we
state some technical results.

\begin{proposition}[Trace theorem in $\leb{s}$]\label{lem:trace}
    Let $v\in \sob1s(K)$ with $s\in (1,\infty)$.  Then, there exists a
    constant $C_{\text{tr}}$ depending only on $s$ and $K$ such that
    \begin{equation}
        \Norm{v}_{\leb{s}(\partial K)}
        \leq
        C_{\text{tr}} \qp{h^{-\tfrac 1 s}\Norm{v}_{\leb s(K)}
            +
            h^{1-\tfrac 1 s} \Norm{\nabla v}_{\leb s(K)}
        }.
    \end{equation}
\end{proposition}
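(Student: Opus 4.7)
The plan is to prove this by a standard scaling argument from a reference element, so that the $h$-dependent factors arise naturally from the change of variables. First I would fix a reference element $\widehat K$ (either the unit simplex or unit square, depending on the case) and invoke the classical trace theorem in $\leb{s}$ on $\widehat K$, namely that there is a constant $C$ depending only on $s$ and $\widehat K$ with
\begin{equation}
\Norm{\hat v}_{\leb{s}(\partial \widehat K)}
\leq
C\qp{\Norm{\hat v}_{\leb{s}(\widehat K)} + \Norm{\hat\nabla \hat v}_{\leb{s}(\widehat K)}}
\end{equation}
for every $\hat v \in \sob{1}{s}(\widehat K)$. This is a well-known consequence of the continuity of the trace operator $\sob{1}{s}(\widehat K) \to \sob{1-1/s}{s}(\partial \widehat K) \hookrightarrow \leb{s}(\partial \widehat K)$.

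Next I would let $F_K : \widehat K \to K$ be an affine diffeomorphism and set $\hat v = v \circ F_K$. Under shape regularity, $|\det DF_K| \simeq h^2$ and $|DF_K|, |DF_K^{-1}| \simeq h, h^{-1}$ respectively, with constants depending only on the shape-regularity parameter. A direct change of variables then gives
\begin{equation}
\Norm{\hat v}_{\leb{s}(\widehat K)} \simeq h^{-2/s}\Norm{v}_{\leb{s}(K)},
\qquad
\Norm{\hat v}_{\leb{s}(\partial \widehat K)} \simeq h^{-1/s}\Norm{v}_{\leb{s}(\partial K)},
\end{equation}
while the chain rule $\hat\nabla \hat v = (DF_K)^\top (\nabla v)\circ F_K$ together with the same determinant computation yields
\begin{equation}
\Norm{\hat\nabla \hat v}_{\leb{s}(\widehat K)} \simeq h^{1-2/s}\Norm{\nabla v}_{\leb{s}(K)}.
\end{equation}

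Substituting these equivalences into the reference trace inequality and multiplying through by $h^{1/s}$ gives the claimed estimate with $C_{\text{tr}}$ depending only on $s$, $\widehat K$ and the shape-regularity constant. The only mild obstacle is keeping the bookkeeping on the exponents of $h$ consistent, and verifying that the reference trace inequality holds for both the triangular and quadrilateral reference elements in the range $s \in (1,\infty)$; both cases are covered by the standard Sobolev trace embedding, so no new idea is required.
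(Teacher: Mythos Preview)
Your proof is correct and is the standard scaling argument for this kind of scaled trace inequality. The paper does not actually prove this proposition; it states it as a known technical tool without argument, so there is nothing to compare against beyond noting that your reference-element-plus-scaling approach is exactly the canonical one.
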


\begin{definition}[Jump operator]
    We define jump operators as $\jump{v} = {{{v}|_{K_1} \vec n_{K_1} +
            {v}|_{K_2}} \vec n_{K_2}}$, $\jump{\vec v} = {{\vec{v}|_{K_1}}}
    \cdot \vec n_{K_1} + {{\vec{v}|_{K_2}}} \cdot \vec n_{K_2}$.
\end{definition}

\begin{lemma}[Interpolation estimate]\label{lem:interpolation}
    Let $\cI$ denote the piecewise linear Lagrange interpolator over
    $\T{}$. The following estimate holds for all elements $K \in \T{}$,
    faces $e\in\E$ and all functions $v \in \sob{2}{s}(K)$ with  $s> 1$.
    \begin{align*}
        \max\qp{
            h^{-2}\Norm{v - \cI v}_{\leb{s}(K)}
            ,
            h^{-2+1/s}\Norm{v - \cI v}_{\leb{s}(e)}
        }
        &\leq C \Norm{v}_{\sob{2}{s}(\widehat K)}.
    \end{align*}
\end{lemma}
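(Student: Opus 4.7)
The plan is to use the classical Bramble-Hilbert plus scaling argument, combined with the trace inequality of Proposition \ref{lem:trace} to pass from elements to their faces. Fix a reference element $\hat K$ (the reference unit triangle for $\mathbb{P}^1$, the reference unit square for $\mathbb{Q}^1$) and let $F_K : \hat K \to K$ be the affine (respectively bilinear) map onto a physical element $K$. By shape regularity the Jacobian satisfies $|\det DF_K| \sim h_K^2$, $\Norm{DF_K} \leq C h_K$, and $\Norm{DF_K^{-1}} \leq C h_K^{-1}$. A preliminary observation is that for $s > 1$ the two-dimensional embedding $\sob{2}{s}(\hat K) \hookrightarrow C^0(\overline{\hat K})$ ensures that the nodal Lagrange interpolant $\hat \cI$ is well defined and continuous on $\sob{2}{s}(\hat K)$.

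First, I establish the bound on the reference element. The operator $I - \hat \cI$ is bounded from $\sob{2}{s}(\hat K)$ into $\sob{1}{s}(\hat K)$ (and hence into $\leb{s}(\hat K)$), and annihilates every polynomial of degree at most one. The Bramble-Hilbert lemma therefore produces a constant $C$ depending only on $s$ and $\hat K$ such that
\begin{equation*}
  \Norm{\hat v - \hat \cI \hat v}_{\leb{s}(\hat K)} + \Norm{\nabla(\hat v - \hat \cI \hat v)}_{\leb{s}(\hat K)} \leq C \Norm{\nabla^2 \hat v}_{\leb{s}(\hat K)}.
\end{equation*}

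Second, I pull back to $K$ via $v = \hat v \circ F_K^{-1}$ using the standard scalings $\Norm{w}_{\leb{s}(K)} \sim h_K^{2/s} \Norm{\hat w}_{\leb{s}(\hat K)}$, $\Norm{\nabla w}_{\leb{s}(K)} \sim h_K^{2/s - 1} \Norm{\hat\nabla \hat w}_{\leb{s}(\hat K)}$, and $\Norm{\hat\nabla^2 \hat v}_{\leb{s}(\hat K)} \sim h_K^{2 - 2/s} \Norm{\nabla^2 v}_{\leb{s}(K)}$. Combining these with the reference-element estimate and enlarging $K$ to the patch $\widehat K$ on the right-hand side delivers
\begin{equation*}
  \Norm{v - \cI v}_{\leb{s}(K)} \leq C h_K^2 \Norm{v}_{\sob{2}{s}(\widehat K)}, \qquad \Norm{\nabla(v - \cI v)}_{\leb{s}(K)} \leq C h_K \Norm{v}_{\sob{2}{s}(\widehat K)},
\end{equation*}
which is the volume component of the claim.

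Third, for the face estimate I apply Proposition \ref{lem:trace} to $v - \cI v \in \sob{1}{s}(K)$:
\begin{equation*}
  \Norm{v - \cI v}_{\leb{s}(e)} \leq C_{\text{tr}} \qp{h_K^{-1/s} \Norm{v - \cI v}_{\leb{s}(K)} + h_K^{1 - 1/s} \Norm{\nabla(v - \cI v)}_{\leb{s}(K)}}.
\end{equation*}
Substituting the two volume bounds, both contributions scale as $h_K^{2 - 1/s} \Norm{v}_{\sob{2}{s}(\widehat K)}$, which after division gives $h_K^{-2 + 1/s} \Norm{v - \cI v}_{\leb{s}(e)} \leq C \Norm{v}_{\sob{2}{s}(\widehat K)}$, completing the proof.

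No genuine obstacle arises: this is a standard estimate once one observes that $s > 1$ in two spatial dimensions is the critical hypothesis enabling pointwise nodal evaluation, as $\sob{2}{1}(\hat K)$ does not embed into $C^0$. The remainder is careful bookkeeping of the powers of $h_K$ introduced by the change of variables; the bilinear case on quadrilaterals works identically provided the Jacobian bounds remain uniform under shape regularity.
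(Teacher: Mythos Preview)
Your proof is correct and follows the classical route (Bramble--Hilbert on the reference element, scaling, then the trace inequality of Proposition~\ref{lem:trace} for the face term). The paper itself states Lemma~\ref{lem:interpolation} without proof, treating it as a standard result, so there is nothing to compare against; your argument is exactly the expected justification.
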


\subsection{Dual variational inequality}

Motivated by the approach used in \cite{mosco1977error} to derive a priori
$\leb{2}(\W)$ estimates, we define the following set

\begin{equation}
    \cM=
    \{v \in \sobh{1}(\W)\mid v \leq 0 \text{  on  } \interior \cA\}.
\end{equation}
For $p \in (4, \infty)$ consider the problem of finding $z \in \cM$
such that
\begin{equation} \label{eq:mosco_dual}
    a(z, v-z)
    \geq
    \ltwop{\max(0,u-U)^{p-1}}{v-z}
    \quad \forall v \in \cM.
\end{equation}

\begin{remark}\label{rem:l4:admissible_function}
 Observe that $z + u - U$ lies in $\cM$, since by the definition of
 $\interior \cA$, $u$ vanishes there and $U \geq 0$ on $\partial \W$
 by definition of $\cK_h$, \eqref{def:k_h}. 
\end{remark}

To derive a posteriori error bounds, we require results on the
stability and regularity of the dual problem \eqref{eq:mosco_dual}, which we will now quantify.

Before presenting the necessary results, we make the following
definitions regarding the corners of the domain and the boundary
conditions.

\begin{definition}\label{def:l4:corner_functions}
  {We let $\{b_1,...,b_{N_p}\}$ be the set of critical
    points that make up the boundary of $\interior \cA$ and the
    vertices of $\partial \W$.} Note that under the assumption that
    \hyperref[struct_of_contact_set]{Condition $(\mathbf{A})$} holds,
    this is a finite set consisting of the vertices of $\partial \W$
    and the points at which there is a change in boundary condition.
    We let $\omega_k$ denote the angle at the boundary at $b_k$, that
    is, $\omega_k = \pi$ at a change of boundary conditions or
    $\omega_k = \tfrac{\pi}{2}$ at the corners of the domain, since
    the domain is the unit square.  We will denote by $r_k, \theta_k$
    the polar coordinates centred at point $b_k$, $\zeta_k$ a smooth
    cut-off function identically equal to 1 in a neighbourhood of
    $b_k$, and $\phi_k = \phi_k(\theta_k)$ a trigonometric function.
\end{definition}
The functions described in Definition \ref{def:l4:corner_functions}
are the key tools in quantifying the behaviour of the solution around
the points $b_k$. The properties of the dual problem that we will need
are summarised in the following lemma.

\begin{proposition}[Properties of the dual problem
\cite{christof1754finite}]
    \label{pro:reg}
    Let $u$ solve \eqref{eq:weak_form} for some $f \in \leb{2}(\W)$,
    and satisfy \hyperref[struct_of_contact_set]{Condition $(\mathbf{A})$}, and $U \in \fes$ solve
    \eqref{eq:discrete_form}.  Then the dual problem
    \eqref{eq:mosco_dual} is uniquely solvable in $\sobh{1}(\W)$.  In
    addition, we have $z \geq 0$ in $\W$ and $z=0$ on $\cA$, and for
    any $p\in(4,\infty)$ and its conjugate $q = \tfrac{p}{p-1}$
    there exists $C>0$ independent of $h$ such that the stability
    bound
    \begin{equation}\label{eq:dual_stability}
        \Norm{z}_{\sobh{1}(\W)}
        \leq
        C \Norm{\max(0, u-U)^{p-1}}_{\leb{q}(\W)}
    \end{equation}
    holds.
    Furthermore, we have the following regularity result in $\sob 2 q(\W)$
    \begin{equation}\label{eq:dual_regularity}
        \Norm{z}_{\sob{2}{q}(\W)}
        \leq
        C \Norm{\max(0, u-U)^{p-1}}_{\leb{q}(\W)}.
    \end{equation}
\end{proposition}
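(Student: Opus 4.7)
Proof proposal. The plan is to establish the four claims in sequence, exploiting that $\cM$ is a convex cone.

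Unique solvability in $\sobh{1}(\W)$ is immediate from Lions--Stampacchia: $\cM$ is nonempty, closed, and convex; $a$ is continuous, coercive, and symmetric; and since $\max(0,u-U)^{p-1} \in \leb{q}(\W)$ with $q$ the H\"older conjugate of $p$, the embedding $\sobh{1}(\W) \hookrightarrow \leb{p}(\W)$ (available in two dimensions for any finite $p$) shows the right-hand side is a bounded linear functional.

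For the sign property, note that if $z \in \cM$ then $z_+ = \max(z,0) \in \cM$ too, because $z \leq 0$ on $\interior \cA$ forces $z_+ = 0$ there. Testing with $v = z_+$ and using $z_+ - z = -z_-$ gives $-a(z, z_-) \geq -\ltwop{\max(0,u-U)^{p-1}}{z_-}$, whence $\Norm{z_-}_{\sobh{1}(\W)}^2 \leq \ltwop{\max(0,u-U)^{p-1}}{z_-} \leq 0$ and so $z_- = 0$. Combined with the defining constraint this gives $z = 0$ on $\interior \cA$, which under \hyperref[struct_of_contact_set]{Condition $(\mathbf{A})$} extends to all of $\cA$ since the two sets differ by finitely many points. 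The stability estimate \eqref{eq:dual_stability} follows from the cone structure of $\cM$: both $2z$ and $0$ are admissible test functions, yielding the equality $a(z,z) = \ltwop{\max(0,u-U)^{p-1}}{z}$, after which H\"older's inequality and the Sobolev embedding $\sobh{1}(\W) \hookrightarrow \leb{p}(\W)$ finish the argument.

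The deep part, and the main obstacle, is \eqref{eq:dual_regularity}. The variational inequality, together with the complementarity derived above, reduces to the mixed boundary value problem $-\Delta z + z = \max(0, u-U)^{p-1}$ in $\W$, with Dirichlet data $z = 0$ on $\interior \cA$ and natural Neumann condition $\nabla z \cdot \vec n = 0$ on $\partial\W \setminus \cA$. Under \hyperref[struct_of_contact_set]{Condition $(\mathbf{A})$}, this is a mixed Dirichlet--Neumann problem on the unit square with finitely many boundary-condition transitions at the points $b_k$ of Definition \ref{def:l4:corner_functions}. Classical Grisvard-type regularity theory for such mixed problems on polygonal domains, as assembled in \cite{christof1754finite}, gives a decomposition of $z$ into a regular $\sob{2}{q}(\W)$-component and a finite sum of corner-type singular functions localised at the $b_k$ involving $\zeta_k$, $r_k$, and $\phi_k$. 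At a transition of boundary condition along a straight edge ($\omega_k = \pi$) the leading singular exponent is $\tfrac12$, so its Hessian behaves like $r_k^{-3/2}$ and lies in $\leb{q}$ precisely for $q < \tfrac{4}{3}$. This critical threshold is the origin of the restriction $p \in (4, \infty)$ that pervades the paper, and this step cannot be shortened beyond invoking the careful analysis in \cite{christof1754finite}.
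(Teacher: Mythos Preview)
Your proposal is correct and follows essentially the same route as the paper: the sign argument via $v = z_+$, the stability estimate via the cone tests $v = 0$ and $v = 2z$, and the reduction to a mixed Dirichlet--Neumann problem analysed through Grisvard's corner-singularity expansion, with the final $\sob{2}{q}$ stability bound deferred to \cite{christof1754finite}. The only step the paper makes explicit that you leave implicit is the identity $a(z_+, z_-) = 0$, which is needed to pass from $a(z, z_-) \leq \ltwop{\max(0,u-U)^{p-1}}{z_-}$ to $\Norm{z_-}_{\sobh{1}(\W)}^2 \leq \ltwop{\max(0,u-U)^{p-1}}{z_-}$.
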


\begin{proof}

We first claim that $a(z_+, z_-) = 0$.
To see this, note that by definition we have $z=z_+ + z_-$, and
\begin{equation}\label{eq:l4:positive_negative_parts}
  a(z_+, z_-)
  =
  \int_{\W}\grad z_+ \cdot \grad z_- + z_+ z_- \diff x.
\end{equation}
It is clear that $z_+ z_- \equiv 0$ since $z$ cannot be both positive and negative.
Further, we can write $\W = \{x \in \W : z <0 \} \cup \{x \in \W : z \geq 0 \}$.
Note that in each set, precisely at least one of $z_+$ and $z_-$ is identically the zero function, and therefore if the set has nonzero measure, its weak gradient must be zero there.
Contributions to the integral \eqref{eq:l4:positive_negative_parts} must therefore be zero and the claim is proved.

Since $z \in \cM$, $z \leq 0$ on $\interior \cA$, its positive part must be zero there, and we may take $v = z_+$ as test function in \eqref{eq:mosco_dual}, which yields

\begin{equation}
    -\|z_-\|^2_{\sobh{1}(\W)}
    =
    a(z, -z_-)
    \geq
    \ltwop{\max(0,u-U)^{p-1}}{-z_-}.
\end{equation}
In other words, since $\max(0,u-U)^{p-1}$ is non-negative and $z_-$ is non-positive,

\begin{equation}
    0
    \leq
    \|z_-\|^2_{\sobh{1}(\W)}
    \leq
    \ltwop{\max(0,u-U)^{p-1}}{z_-}
    \leq
    0,
\end{equation}
so that $z_- \equiv 0$, which proves that $z\geq 0 $.
This in turn gives $z = 0$ on $\interior \cA$.

The bound \eqref{eq:dual_stability} follows after noting that we can take $v=0$ in \eqref{eq:mosco_dual} to see

\begin{equation}
  a(z, -z)
  \geq
  \ltwop{\max(0,u-U)^{p-1}}{-z},
\end{equation}
and $v=2z$ in \eqref{eq:mosco_dual}, yielding
\begin{equation}
  a(z, z)
  \geq
  \ltwop{\max(0,u-U)^{p-1}}{z}.
\end{equation}
Therefore,
\begin{equation*}
  \begin{split}
  \Norm{z}^2_{\sobh{1}(\W)}
  &=
  \ltwop{\max(0,u-U)^{p-1}}{z} \\
  &\leq
  \Norm{\max(0,u-U)^{p-1}}_{\leb{q}(\W)}
  \Norm{z}_{\leb{p}(\W)} \\
  &\leq
  C_{\text{Sob}}
  \Norm{\max(0,u-U)^{p-1}}_{\leb{q}(\W)}
  \Norm{z}_{\sobh{1}(\W)},
  \end{split}
\end{equation*}
where we have used the embedding of $\sobh{1}(\W)$ in $\leb{p}(\W)$ with embedding constant $C_{\text{Sob}}$.
The result now follows after dividing by $\Norm{z}_{\sobh{1}(\W)}$.

Now, $z$ is the weak solution of the boundary value problem

\begin{equation}
 - \Delta z + z
 =
 \max(0,u-U)^{p-1},
\end{equation}
with zero Dirichlet boundary condition on $\cA$ and zero Neumann condition on the remainder of the boundary.
The problem therefore fits into the framework of \cite[Theorem 4.4.3.7]{Grisvard:1985}, which allows us to quantify the regularity of $z$ to the following extent.
With the notation of Definition \ref{def:l4:corner_functions}, there exist unique real numbers $c_{k,m}$ such that

\begin{equation}\label{eq:l4:Grisvard_expansion}
z - \sum_{\substack{1 \leq k \leq N_p \\ -2 \slash q' < \lambda_{k,m} < 0 \\\lambda_{k,m} \neq -1}} c_{k,m} r^{-\lambda_{k,m}} \zeta_k \phi_k 
\in 
\sob{2}{(4-\varepsilon)\slash (1 - \varepsilon)}(\W),
\end{equation}
where $q'$ is the H\"older conjugate of $(4-\varepsilon)\slash (1 -
\varepsilon)$, $\lambda_{k,m}$ are eigenvalues of a Laplacian operator
which depends upon $\omega_k$ (see \cite[\S 4.4]{Grisvard:1985}
for complete enumeration of the $\lambda_{k, m}$ in all cases).  In
our case where $\W$ is a convex polygonal domain, the regularity of
$z$ is determined by the term in \eqref{eq:l4:Grisvard_expansion} with
the lowest power of $r$, which is $r^{1 \slash 2}
  \zeta_k \phi_k$.  The singularity of type $r^{1 \slash 2}$ occurs
at points where the boundary condition changes (compare with
\cite[\S3]{apel2020regularity} on this point).

Now, we observe that $r^{1 \slash 2} \zeta_k \phi_k \in \sob{k}{s}(\W)$
if and only if $k - \tfrac 2 s < \tfrac{1}{2}$.  This gives a limit on
the regularity of the second derivatives of $z$: $z\in \sob{2}{s}(\W)$
only if $s < \tfrac 4 3$.

The proof of the stability bound in $\sob{2}{q}(\W)$ is a technical
argument that we will not reproduce here, but can be found in full in
\cite[Lemma 5.2]{christof1754finite}.
\end{proof}

\begin{remark}[Smoothness of the dual problem]
  It is important to note that the limit on regularity for the dual
  problem is imposed by the assumption of convexity of the domain and
  the nature of the boundary conditions, not the regularity of the
  problem data $f$.  Indeed, the nonsingular part of $z$, that is, the
  expression in \eqref{eq:l4:Grisvard_expansion} possesses higher
  regularity depending on the problem data, but the regularity of the
  singular components of $u$ is limited by the geometry.  For certain
  boundary conditions including the Signorini condition
  \eqref{eq:constraints}, $\sobh{2}(\W)$-regularity follows from
  Proposition \ref{prop:primal_regularity} which implies that the
  singular part of the solution vanishes.  For the mixed boundary
  condition, this result does not apply, and the regularity is
  therefore constrained by the least regular of the components in
  \eqref{eq:l4:Grisvard_expansion}.
\end{remark}

\begin{remark}[Choice of norm for a posteriori error estimation]
Proposition \ref{pro:reg} also motivates a posteriori error estimation
in $\leb{p}(\W)$ for $p > 4$.  We see that we can only apply a duality
approach for $q < \tfrac 4 3$, with the H\"older conjugate of $\tfrac
4 3$ being $4$.  Thus, $\leb{4 + \varepsilon}(\W)$ for any
$\varepsilon \ll 1$ is the weakest space in which
estimates of the traditional residual form can be established.
In particular, error estimates in $\leb{2}(\W)$ are not possible in this
framework.
\end{remark}

\begin{proposition}
    \label{prop:u_solves_bvp}
    Let $u$ be the solution of (\ref{eq:diff_eq})-(\ref{eq:constraints}) and suppose that $w \in \sobh{1}(\W)$ has tr$(w) = 0$ on $\mathring{\cA}$.
    Then
    \begin{equation}\label{eq:l4:u_solves_bvp}
        a(u, w)
        =
        \ltwop{f}{w}.
    \end{equation}
\end{proposition}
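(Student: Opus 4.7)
The plan is to upgrade the variational inequality \eqref{eq:weak_form} to an equality against the restricted class of test functions described, by exploiting the $H^2$-regularity of $u$ from Proposition \ref{prop:primal_regularity} together with the complementarity structure of the Signorini boundary conditions \eqref{eq:constraints}.

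First I would extract the strong form of the PDE inside $\Omega$. For any $\phi \in C_c^\infty(\Omega)$, the functions $u \pm \phi$ both lie in $\cK$ since they coincide with $u$ on $\partial\Omega$. Testing \eqref{eq:weak_form} with $v = u + \phi$ and then $v = u - \phi$ gives $a(u,\phi) = l(\phi)$ for all such $\phi$. Using Proposition \ref{prop:primal_regularity} to integrate by parts, this yields $-\Delta u + u = f$ pointwise almost everywhere in $\Omega$.

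Next I would identify the Neumann part of the boundary condition. On $\partial\Omega \setminus \cA$ the continuous representative of $u$ satisfies $u > 0$, so the complementarity relation $u\,\nabla u \cdot \vec n = 0$ from \eqref{eq:constraints} forces $\nabla u \cdot \vec n = 0$ there. Under \hyperref[struct_of_contact_set]{Condition $(\mathbf{A})$}, the topological boundary of $\cA$ inside $\partial\Omega$ consists of finitely many isolated points and hence is an $\mathcal H^1$-null subset of $\partial\Omega$, so $\partial\Omega$ decomposes, up to a null set, as $\interior \cA \cup (\partial\Omega \setminus \cA)$.

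Finally I would integrate by parts in $a(u,w)$. Because $u \in \sobh{2}(\W)$, the normal derivative $\nabla u \cdot \vec n$ is well defined as an $L^2(\partial\Omega)$ function and
\begin{equation*}
  a(u,w) = \ltwop{-\Delta u + u}{w} + \int_{\partial\Omega} (\nabla u \cdot \vec n)\, \operatorname{tr}(w)\, ds.
\end{equation*}
On $\interior \cA$ the hypothesis $\operatorname{tr}(w)=0$ kills the integrand; on $\partial\Omega \setminus \cA$ the factor $\nabla u \cdot \vec n$ vanishes by the preceding step; and the remaining critical points contribute nothing. Combining this with $-\Delta u + u = f$ gives $a(u,w) = \ltwop{f}{w}$, as required.

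The main obstacle is the careful bookkeeping of the boundary decomposition: one must check that the pointwise complementarity argument on $\partial\Omega \setminus \cA$ is compatible with the a.e./trace-sense statements used in the integration-by-parts identity, which is precisely where \hyperref[struct_of_contact_set]{Condition $(\mathbf{A})$} — ensuring that the critical points form a null set — plays its quiet but essential role.
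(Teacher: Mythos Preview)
Your proof is correct and takes essentially the same approach as the paper: identify that $u$ solves a mixed Dirichlet--Neumann boundary value problem (Dirichlet on $\interior\cA$, Neumann on its complement), then integrate by parts against $w$ and use that the boundary term vanishes piecewise. The paper's version is terser --- it simply cites \cite[Theorem 2.3]{christof1754finite} for the fact that $u$ solves the mixed BVP \eqref{eq:u_bvp} and then integrates by parts --- whereas you spell out the derivation of the interior equation and the Neumann condition explicitly; but the underlying argument is the same.
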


\begin{proof}

    In light of the structure of the contact set, described in Remark
    \ref{struct_of_contact_set}, it can be seen (see \cite[thm 2.3]{christof1754finite}) that $u$ solves the boundary value problem

    \begin{equation} \label{eq:u_bvp}
        \begin{split}
            - \Delta u + u &= f \quad \text{in} \quad \W ,\\
            u &= 0 \quad \text{on} \quad \Gamma_i, i = 1,...,N ,\\
            \grad u \cdot \bm{n} &= 0 \quad \text{on} \quad \Gamma_i, i =
            N+1,...,N+M.
        \end{split}
    \end{equation}
    Here, $\Gamma_i \subset \W$ are disjoint open line segments such that
    $\bigcup_{i=1}^{N+M} \bar{\Gamma}_i = \partial \W$, and such that there is a set
    $R$ of measure zero such that
    \[
    \cA = \bigcup_{i=1}^N \bar{\Gamma}_i  \cup R.
    \]
    The claim now follows immediately by testing \eqref{eq:u_bvp} with $w$
    and integrating by parts.
\end{proof}

\renewcommand{\zbarh}{\binterp(z)}

\subsection{Interpolation operator for bilateral approximation}\label{sec:interpolation_assumption}

We now discuss our requirements for a finite element interpolation
operator, the construction of which is postponed to \S
\ref{sec:bilateral_approx}.

\begin{assumption}[Bilateral optimal approximation]
  \label{ass:bilateral}
  {We make the assumption that there exists an
  interpolation operator $\binterp : \sob{2}{q}(\W) \to \fes$
  such that for any $w\in\sob{2}{q}(\W)$, $q \in (1,
    \tfrac 43)$, with $w \geq 0$,
  \begin{equation}\label{eq:both_sides}
    0 \leq \binterp(w) \leq w
  \end{equation}
  and for any $K\in\T{}$
  \begin{equation}
    \Norm{w - \binterp(w)}_{\leb{q}(K)}
    \leq
    C_b h^2 \norm{w}_{\sob{2}{q}(\patch{K})}.
  \end{equation}}
\end{assumption}
This interpolant is a crucial part of our a posteriori error analysis,
as demonstrated by the following lemma, which allows us to introduce
an interpolant into the error inequality for $(u-U)_+$.  This is done
by using the bilateral bounds \eqref{eq:both_sides} to determine the
sign of $a\qp{\zbarh, u- U}$.

\begin{lemma}
    \label{lem:ineq}
    Let $u$ be the solution of (\ref{eq:diff_eq})-(\ref{eq:constraints}), $U$ be the finite element approximation satisfying (\ref{eq:discrete_form}), $z$ be the dual solution of (\ref{eq:mosco_dual}) and $\zbarh$ be the bilateral approximation of $z$.
    Then

    \begin{equation}\label{interpolant_inequality}
        a\qp{\zbarh, u- U} \leq 0.
    \end{equation}
\end{lemma}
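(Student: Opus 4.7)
The plan is to exploit the symmetry of the bilinear form $a$, so that the claim reduces to showing $a(u-U,\zbarh)\le 0$, and then to assemble this from an equality coming from the continuous problem via Proposition \ref{prop:u_solves_bvp} and the discrete variational inequality \eqref{eq:discrete_form}. Assumption \ref{ass:bilateral} enters precisely by making $\zbarh$ a legitimate test function in both settings simultaneously.

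To carry this out, I would first note that by Proposition \ref{pro:reg} the dual solution satisfies $z\ge 0$ in $\W$, so Assumption \ref{ass:bilateral} applies with $w=z$ and yields the pointwise bounds $0\le\zbarh\le z$. Combined with $z=0$ on $\cA$, also from Proposition \ref{pro:reg}, the upper bound forces $\zbarh=0$ on $\interior \cA$. Since $\zbarh\in\fes\subset\sobh1(\W)$, Proposition \ref{prop:u_solves_bvp} applies with the choice $w=\zbarh$ and gives
\begin{equation*}
a(u,\zbarh)=\ltwop{f}{\zbarh}=l(\zbarh).
\end{equation*}
Independently, the lower bound $\zbarh\ge 0$ gives in particular $\zbarh\ge 0$ on $\partial\W$, so $\Phi:=U+\zbarh$ lies in $\cK_h$. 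Testing \eqref{eq:discrete_form} against this $\Phi$ produces $a(U,\zbarh)\ge l(\zbarh)$. Subtracting and invoking symmetry of $a$ yields the claim.

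No substantive analytic obstacle appears; the content of the lemma is really structural. Each of the two inequalities in Assumption \ref{ass:bilateral} does exactly one piece of work: the upper bound $\zbarh\le z$ transfers the vanishing trace of $z$ on $\interior \cA$ to $\zbarh$, enabling Proposition \ref{prop:u_solves_bvp}; the lower bound $\zbarh\ge 0$ secures admissibility of $U+\zbarh$ in $\cK_h$. An ordinary Lagrange interpolant has no reason to satisfy either of these two constraints simultaneously, which is why the tailored bilateral interpolant of \S\ref{sec:bilateral_approx} must be introduced in the first place.
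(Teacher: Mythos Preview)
Your argument is correct and follows essentially the same route as the paper: use $0\le\zbarh\le z$ together with $z=0$ on $\interior\cA$ to invoke Proposition~\ref{prop:u_solves_bvp} for the equality $a(u,\zbarh)=l(\zbarh)$, then use $\zbarh\ge 0$ to test \eqref{eq:discrete_form} with $\Phi=U+\zbarh$ for the inequality $a(U,\zbarh)\ge l(\zbarh)$, and subtract. Your explicit mention of the symmetry of $a$ and your explanation of which half of the bilateral bound is doing which job are welcome additions to the exposition.
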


\begin{proof}
    By equation \eqref{eq:l4:u_solves_bvp} and since $0 \leq \zbarh \leq z = 0$ on
    $\mathring{\cA}$,
    we have immediately that
    \begin{equation} \label{eq:sign_1}
        a\qp{u, \, \zbarh}
        =
        \ltwop{f}{ \zbarh}.
    \end{equation}
    Since $\zbarh \geq 0$, we may take $\Phi = U +
    \zbarh$ in \eqref{eq:discrete_form}, giving

    \begin{equation}\label{eq:sign_2}
        a\qp{U, \,- \zbarh}
        \leq
        -\ltwop{f}{ \zbarh} .
    \end{equation}
    The result then follows after combining \eqref{eq:sign_1} and
    \eqref{eq:sign_2}.
\end{proof}

\subsection{A posteriori error bounds}

We are now ready to state and prove the main result of this article, a rigorous a posteriori bound in $\leb{p}(\W)$ for the problem (\ref{eq:diff_eq}) - (\ref{eq:constraints}).
We prove this in two parts, bounding separately the quantities $\Norm{(u-U)_+}_{\leb{p}(\W)}$ and $\Norm{(u-U)_-}_{\leb{p}(\W)}$.

\begin{Theorem}[A posteriori upper bound for the positive part of the error]\label{thm:l4:pos_bound}
    Let $u$ solve problem \eqref{eq:weak_form} and $U$ be the finite
    element approximation.  Then, for $p\in\left(4,\infty\right)$, $q
    = \tfrac{p}{p-1}$ its conjugate and $f\in\leb{p}(\W)$, we have
    \begin{equation}\label{eq:l4:pos_bound}
        \Norm{(u-U)_+}_{\leb{p}(\W)}^p
        \leq
        \eta (U, f, h)
        :=
        C
        \sum_{K\in \T{}} \left(
        \eta_K^p
        +
        \frac 1 2 \eta_J^p
        \right),
    \end{equation}
    where
    \begin{equation}
        \begin{split}
            \eta_K &= h^2 \Norm{-\Delta U + U - f}_{\leb{p}(K)},
            \\
            \eta_J &= h^{2-1/q} \Norm{\jump{\grad U}}_{\leb{p}(\partial K)}.
        \end{split}
    \end{equation}
\end{Theorem}

\begin{proof}

    We may select $v = z + u - U$ in \eqref{eq:mosco_dual}; this function is shown to belong to $\cM$ in Remark \ref{rem:l4:admissible_function}.
    We therefore have
    \begin{equation} \label{eq:err_rep}
        \Norm{(u - U)_+}^p_{\leb{p}(\W)}
        \leq
        a(z, u - U).
    \end{equation}
    We may use Lemma \ref{lem:ineq} to introduce $\zbarh$ as follows:

    \begin{equation} \label{eq:l4_error_representation}
        \Norm{(u - U)_+}^p_{\leb{p}(\W)}
        \leq
        a\qp{z-\zbarh, u - U} .
    \end{equation}
    Since $z$ and $\zbarh$ both have zero trace on $\cA$, we can use Proposition
    \ref{prop:u_solves_bvp} to arrive at

    \begin{equation*}
        \Norm{(u - U)_+}^p_{\leb{p}(\W)}
        \leq
        l\qp{z - \zbarh}
        -
        a\qp{z-\zbarh, U} .
    \end{equation*}
    The rest of the argument follows standard a posteriori techniques, to begin
    \begin{equation}
        \label{eq:up1}
        \begin{split}
            l\qp{z-\zbarh} - a\qp{U,z - \zbarh}
            &=
            \int_\W f\qp{z - \zbarh} - \grad U \cdot \qp{\grad z - \grad \zbarh}
            - U \qp{z - \zbarh} \diff x
            \\
            &=
            \int_\W \qp{f + \Delta U - U} \qp{z - \zbarh} \diff x
            -
            \int_{\E\cup \partial \W} \jump{\grad U} \qp{z - \zbarh} \diff S
            \\
            &=: R_1 + R_2.
        \end{split}
    \end{equation}
    Splitting the integral elementwise and making use of the Cauchy-Schwarz inequality, we see that

    \begin{equation}
        \label{eq:pf2}
        \begin{split}
            R_1
            &=
            \int_\W \qp{f + \Delta U - U} \qp{z - \zbarh} \diff x
            \\
            &\leq
            \sum_{K\in\T{}}
            \Norm{h^2\qp{f + \Delta U - U}}_{\leb{p}(K)}
            \Norm{h^{-2}\qp{z - \zbarh}}_{\leb{q}(K)}.
        \end{split}
    \end{equation}
    Invoking the optimal approximation result of Assumption
    \ref{ass:bilateral} (proven in Lemma
    \ref{prop:one_sided_optimal_approx}), we obtain
    \begin{equation}\label{eq:up2}
      R_1
      \leq
      C_b \sum_{K\in\T{}}
      \Norm{h^2\qp{f + \Delta U - U}}_{\leb{p}(K)}
      |z|_{\sob{2}{q}(\widehat K)}.
    \end{equation}
    Similarly, we bound $R_2$,
    \begin{equation}
        \label{eq:pf3}
        \begin{split}
            R_2
            &=
            -\sum_{e\in\E \cup \partial \W} \int_e \jump{\grad U} \qp{z - \zbarh}
            \\
            &\leq
            \frac{1}{2}
            \sum_{K\in\T{}} \qp{
                \sum_{e\in \partial K}
                \Norm{h^{2-1/q}\jump{\grad U}}_{\leb{p}(e)}
                \Norm{h^{-2+1/q} \qp{z - \zbarh}}_{\leb{q}(e)}
            }.
        \end{split}
    \end{equation}
    To bound this term we use split $z - \zbarh = (z - \mathcal{I}z) +
    (\mathcal{I}z - \zbarh)$ and use the triangle inequality.  We may
    then use approximation properties of the Lagrange interpolant on
    element boundaries, trace estimates (see Proposition
    \ref{lem:trace}), inverse estimates and Assumption
    \ref{ass:bilateral} to finally arrive at
    \begin{equation}\label{eq:pf4}
      R_2
      \leq
      C_b C_{\text{tr}}(1 + C_{\text{inv}})
      \frac{1}{2}
      \sum_{K\in\T{}}
          \Norm{h^{2-1/q}\jump{\grad U}}_{\leb{p}(\partial K)}
          \norm{z}_{\sob{2}{q}(\widehat{K})},
    \end{equation}
    where $C_{\text{inv}}$ is the constant from the inverse inequality.
    Collecting (\ref{eq:up2})--(\ref{eq:pf4}), we have 
    \begin{equation}
        \Norm{(u - U)_+}^p_{\leb{p}(\W)}
        \leq 
        l\qp{z-\zbarh} - a\qp{U,z - \zbarh}
        \leq
        C \sum_{K\in\T{}} \qp{\eta_{K} + \frac{1}{2}\eta_{J}}|z|_{\sob{2}{q}(\widehat{K})}.
    \end{equation}

    Hence, the result follows from a discrete Cauchy-Schwarz inequality and the regularity bound on $z$ given in equation \eqref{eq:dual_regularity}, after noting that, since $1 \slash p + 1\slash q = 1$, 
    \begin{equation*}
        \Norm{(u-U)^{p-1}_+}_{\leb{q}(\W)} = \Norm{(u-U)_+}^{p-1}_{\leb{p}(\W)}.
    \end{equation*}
\end{proof}

We now prove that the negative part of the error also satisfies a bound of the form \eqref{eq:l4:pos_bound}. Once again taking an analogous approach to that in
\cite{mosco1977error}, we define a second dual problem as follows:
let $\bar{\cM} = \{v \mid v \geq 0 \text{ on } \cA_{U} \}$.  Find
$\bar{z} \in \bar{\cM}$ such that

\begin{equation} \label{eq:negative_dual}
    a (\bar{z}, v - \bar{z})
    \geq
    \ltwop{-\max(0, U - u)^{p-1}}{v-\bar{z}}
    \quad
    \forall v \in \bar{\cM}.
\end{equation}

Key properties follow in a manner analogous to Proposition
\ref{pro:reg}, and are summarised in Proposition
\ref{pro:negative_reg}.  This result relies on a mild assumption, a
posteriori verifiable from the discrete solution $U$, which we detail
below.

\begin{definition}[{\hyperref[def:condition_A_h]{Condition $(\mathbf a_h)$}}]\label{def:condition_A_h}
{For the finite element approximation $U$, we say \hyperref[def:condition_A_h]{Condition $(\mathbf a_h)$} holds if} there exist {finitely many} points $d_i$ lying on $\partial \W$ and numbers $\delta_i >0$ such that the intersections of the open balls $B_{\delta_i}(d_i)$ and the boundary have non-zero distance from each other, and from the corners of the domain, and such that
\begin{enumerate}
  \item
  The sets $B_{\delta_i}(d_i) \cap \partial \W$ cover the boundary of the discrete contact set, and such that each $B_{\delta_i}(d_i) \cap \partial \W$ contains precisely one element of the boundary of the discrete contact set.
  \item
  Every connected component of $\cA_U$ has a non-empty interior.
\end{enumerate}
\end{definition}
{We now have three related conditions: \hyperref[struct_of_contact_set]{Condition $(\mathbf{A})$}, \hyperref[def:condition_A_h_full]{Condition $(\mathbf{A}_h)$} and \hyperref[def:condition_A_h]{Condition $(\mathbf a_h)$} The latter two conditions are not discrete analogues of the first; indeed any finite element function may only have a contact set with finitely many connected components. Rather they place some restrictions on the boundaries of discrete contact sets. We note that in the continuous case, the contact set is allowed to contain singletons, whereas this is not allowed for discrete solutions. In addition, \hyperref[def:condition_A_h_full]{Condition $(\mathbf{A}_h)$} assumes topological stability of the discrete contact set as $h \to 0$.}

\begin{remark}[Verification of Condition $(\mathbf{a}_h)$]
  \label{rem:verification_of_condition}
  Notice that the \hyperref[def:condition_A_h]{Condition
      $(\mathbf{a}_h)$} is strictly weaker than its a priori sibling
     \hyperref[def:condition_A_h_full]{Condition
      $(\mathbf{A}_h)$} where in addition, the topology of the
    discrete contact set is assumed to be stable in the limit $h \to
    0$.  \hyperref[def:condition_A_h]{Condition $(\mathbf{a}_h)$}
    ensures that the discrete contact set does not include isolated
    singletons, and that the boundary of the discrete contact set does
    not include any of the corners of the domain. This condition can
    be verified a posteriori from the discrete solution $U$ by
    examination of the boundary nodal values. {For example, since $U$ is piecewise linear on the boundary, a boundary degree of freedom is a singleton component of $\cA_U$ if and only if it is zero and its neighbouring boundary degrees of freedom are both non-zero. Likewise, a degree of freedom at a corner of the domain is in the boundary of $\cA_U$ if it is zero and precisely one of its neighbours is non-zero. Thus, one can iterate over boundary degrees of freedom and if neither of these situations occur, then \hyperref[def:condition_A_h]{Condition
    $(\mathbf{a}_h)$} holds for $U$.}
    
    We remark that this
    condition is only required to prove bounds on the negative part of
    the error. This is due to the different convex sets in which
    solutions are sought for the two dual problems. The set $\cM$ is
    defined using the contact set of $u$, and is therefore
    $h$-independent, whereas the definition of $\bar{\cM}$ involves
    the discrete contact set.  
\end{remark}

\begin{proposition}[Properties of dual problem for the negative part of the error \cite{christof1754finite}]
  \label{pro:negative_reg}
  Let $u$ solve \eqref{eq:weak_form} for some $f \in \leb{2}(\W)$, and
  $U \in \fes$ solve \eqref{eq:discrete_form}, with
    $U$ satisfying \hyperref[def:condition_A_h]{Condition
      $(\mathbf{a}_h)$}.  Then the dual problem
  \eqref{eq:negative_dual} is uniquely solvable in $\sobh{1}(\W)$.  In
  addition we have $\bar{z} \leq 0$ in $\W$ and $\bar{z}=0$ on
  $\cA_{U}$.  For $p\in (4,\infty)$ and its conjugate $q =
  \tfrac{p}{p-1}$ there exists $C>0$ such that the
  stability bound
  \begin{equation}\label{eq:negative_dual_stability}
      \Norm{\bar{z}}_{\sobh{1}(\W)}
      \leq
      C \Norm{\max(0, U-u)^{p-1}}_{\leb{q}(\W)}
  \end{equation}
  holds.  In addition, we have the following
  regularity result 
  \begin{equation}\label{eq:negative_dual_regularity}
      \Norm{\bar{z}}_{\sob{2}{q}(\W)}
      \leq
      C \Norm{\max(0, U-u)^{p-1}}_{\leb{q}(\W)}.
  \end{equation}
\end{proposition}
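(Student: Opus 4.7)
The plan is to mirror the proof of Proposition \ref{pro:reg}, with signs reversed and the role of $\cA$ played throughout by $\cA_U$. The role of \hyperref[struct_of_contact_set]{Condition $(\mathbf{A})$} in applying Grisvard's expansion is played by \hyperref[def:condition_A_h]{Condition $(\mathbf{a}_h)$}, which is precisely what is needed to ensure the critical set (where the boundary condition type changes) consists of finitely many non-accumulating points, so that \cite[Theorem 4.4.3.7]{Grisvard:1985} remains applicable.

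First I would fix the sign of $\bar z$ by showing $\bar z_+ \equiv 0$. The orthogonality $a(\bar z_+,\bar z_-) = 0$ follows verbatim from the argument used in Proposition \ref{pro:reg}. Since $\bar z \in \bar{\cM}$ means $\bar z \geq 0$ on $\cA_U$, one has $\bar z_- \equiv 0$ on $\cA_U$, so $v = \bar z_-$ is admissible in \eqref{eq:negative_dual}. Substituting and using the orthogonality gives
\begin{equation*}
-\Norm{\bar z_+}_{\sobh1(\W)}^2
= a(\bar z, -\bar z_+)
\geq \ltwop{-\max(0,U-u)^{p-1}}{-\bar z_+}
= \ltwop{\max(0,U-u)^{p-1}}{\bar z_+} \geq 0,
\end{equation*}
forcing $\bar z_+ \equiv 0$, hence $\bar z \leq 0$ in $\W$, and combined with $\bar z \geq 0$ on $\cA_U$, also $\bar z = 0$ on $\cA_U$.

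Next, for the stability bound \eqref{eq:negative_dual_stability}, I would test \eqref{eq:negative_dual} with $v = 0 \in \bar{\cM}$ (admissible since $0 \geq 0$ on $\cA_U$) and with $v = 2\bar z \in \bar{\cM}$, which together produce
\begin{equation*}
\Norm{\bar z}_{\sobh1(\W)}^2
\leq \ltwop{\max(0,U-u)^{p-1}}{-\bar z}
\leq \Norm{\max(0,U-u)^{p-1}}_{\leb q(\W)} \Norm{\bar z}_{\leb p(\W)}.
\end{equation*}
The Sobolev embedding $\sobh1(\W) \hookrightarrow \leb p(\W)$ (valid for any finite $p$ in two dimensions) and division by $\Norm{\bar z}_{\sobh1(\W)}$ then yield \eqref{eq:negative_dual_stability}.

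Finally, for the $\sob 2 q(\W)$ bound \eqref{eq:negative_dual_regularity}, one interprets $\bar z$ as the weak solution of $-\Delta \bar z + \bar z = -\max(0,U-u)^{p-1}$ with homogeneous Dirichlet data on $\cA_U$ and homogeneous Neumann data on its complement in $\partial\W$. Under \hyperref[def:condition_A_h]{Condition $(\mathbf{a}_h)$}, the boundary of $\cA_U$ is a finite set of points separated by a positive distance, each located strictly in the interior of an edge of $\partial\W$, so the framework of \cite[Theorem 4.4.3.7]{Grisvard:1985} applies exactly as in Proposition \ref{pro:reg}. A Grisvard-type decomposition analogous to \eqref{eq:l4:Grisvard_expansion} holds, with the dominant singularity being of type $r^{1/2}\zeta_k\phi_k$ at points where the boundary condition changes; this caps the regularity at $\sob 2 q(\W)$ for $q\in(1,\tfrac 4 3)$. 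I would then invoke \cite[Lemma 5.2]{christof1754finite} for the quantitative stability bound rather than reproduce it.

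The main subtlety, and the reason the result is stated without uniformity of the constant, is that the critical points feeding into Grisvard's decomposition are now determined by $\partial \cA_U$, which moves with $h$. \hyperref[def:condition_A_h]{Condition $(\mathbf{a}_h)$} is strong enough to guarantee the \emph{existence} of the decomposition and hence of the regularity bound for each fixed $h$, but the constant in \eqref{eq:negative_dual_regularity} can in principle depend on the minimum separation between neighbouring critical points of $\cA_U$ and on their distance from domain corners. Only the strictly stronger \hyperref[def:condition_A_h_full]{Condition $(\mathbf{A}_h)$}, which freezes the topology of the discrete contact set as $h\to 0$, would allow one to extract a fully $h$-uniform constant; tracking this dependence (addressed a posteriori in Remark \ref{rem:constant_in_regularity_bound}) is the principal obstacle and is what distinguishes this proposition from its primal analogue.
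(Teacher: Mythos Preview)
Your proposal is correct and follows precisely the route the paper indicates: the paper does not give a standalone proof of Proposition \ref{pro:negative_reg} but states that the properties ``follow in a manner analogous to Proposition \ref{pro:reg}'' and cites \cite{christof1754finite}, and your argument is exactly that analogy carried out in detail with signs reversed and $\cA_U$ in place of $\cA$. The one minor discrepancy is the citation for the quantitative $\sob 2 q$ bound: the paper's Remark \ref{rem:constant_in_regularity_bound} points to \cite[Lemma 5.6]{christof1754finite} for the negative-part dual, not Lemma 5.2 (which is for the positive-part dual).
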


\begin{remark}[The constant in Proposition \ref{pro:negative_reg}]
  \label{rem:constant_in_regularity_bound}
  { A closer examination of the proof of Proposition \ref{pro:negative_reg}, given in \cite[Lemma 5.6]{christof1754finite} reveals that the constant in Equation \eqref{eq:negative_dual_regularity} hides a dependence on the number of boundary points of the contact set of the discrete solution. To better quantify this regularity constant, we give some details of the proof. For some $h$, under the given assumptions there are finitely many, $N_h$, say, critical points
  (i.e. the boundary points of $\cA_U$). If $\psi_i$ is a smooth cut-off function that is identically 1 in a neighbourhood of the $i$th critical point for $i = 1,...,d$ and $\psi_0 = 1-\sum_{i=1}^{N_h} \psi_i$, such that their supports are disjoint, then $\bar z$ admits a decomposition}
  
  \begin{equation*}
    {
    \bar{z} = z_0 + z_1 + ... + z_{N_h},}
  \end{equation*}
  {where for each $i$ we have defined $z_i = \psi_i \bar{z}$.
  Each locally supported function is shown to
  satisfy a regularity bound}
  \begin{equation*}
    {\Norm{z_i}_{\sob{2}{q}(\W)} 
    \leq
    C_i \Norm{\max(0,U - u)^{p-1}}_{\leb{q}(\W)}}
  \end{equation*}
  {and therefore }
  \begin{equation*}
  {  \Norm{\bar{z}}_{\sob{2}{q}(\W)}  
    \leq 
    \sum_i C_i \Norm{\max(0,U - u)^{p-1}}_{\leb{q}(\W)}
    \leq (N_h+1) C \Norm{\max(0,U - u)^{p-1}}_{\leb{q}(\W)},}
  \end{equation*}
  {where $C$ is the maximum of the $C_i$.The quality of the upper bound therefore depends upon the behaviour of $N_h$ as $h \to 0$. Typically it is observed in numerical experiments that the number and location of critical points converge to their true values (see for example \cite{christof1754finite, steinbach_trace} and \S\ref{sec:num}). In \cite[\S 7]{steinbach_trace}, the distance between a discrete critical point and its true location was always bounded from above by the mesh size.  }
  
  {If $N_h$ changes significantly under mesh refinement, as noted in Remark
  \ref{rem:verification_of_condition} once the finite element approximation is
  known we are able to determine the number of critical points by examining the
  boundary degrees of freedom (see Remark \ref{rem:verification_of_condition}) so that if $N_h$ does increase, its expected increase in size can be quantified. If $N_h$ were to increase without limit, this would imply that discrete solutions can oscillate between contact and non-contact over arbitrarily small distances, behaviour that is not present at the continuous level in light of the assumed \hyperref[struct_of_contact_set]{Condition $(\mathbf{A})$}.The stronger \hyperref[def:condition_A_h_full]{Condition $(\mathbf{A}_h)$} assumes that this number remains uniformly bounded as $h \to 0$. Under this assumption there exists a single constant such that the results of Proposition \ref{pro:negative_reg} are true for all sufficiently small $h$.}
\end{remark}

Since $\bar z \leq 0$, we may now once again use the two-sided
approximation.  Let $\widecheck{\Pi}(\bar{z}) = -
  \binterp(-\bar z)$. Then by Assumption \ref{ass:bilateral} (proven
  in \S \ref{sec:bilateral_approx}), $\widecheck{\Pi}(\bar{z})$
  is a finite element function such that $\bar{z} \leq
  \widecheck{\Pi}(\bar{z}) \leq 0$ with required local approximation
  properties.

\begin{Theorem}[A posteriori upper bound for the negative part of the error]\label{thm:l4:neg_bound}
     Let $u$ solve problem \eqref{eq:weak_form} and $U$ be the FE
     approximation, with $U$ satisfying
       \hyperref[def:condition_A_h]{Condition
         $(\mathbf{a}_h)$}. Then, for $p\in\left(4,\infty\right)$, $q
     = \tfrac{p}{p-1}$ its conjugate and $f\in\leb{p}(\W)$ we have
     \begin{equation}\label{eq:neg_bound}
         \Norm{(u-U)_-}_{\leb{p}(\W)}^p
         \leq
         \eta (U, f, h)
         :=
         C
         \sum_{K\in \T{}}
         \left(
         \eta_K^p
         +
         \frac 1 2 \eta_J^p
         \right),
     \end{equation}
     where
     \begin{equation}
         \begin{split}
             \eta_K &= h^2 \Norm{-\Delta U + U - f}_{\leb{p}(K)},
             \\
             \eta_J &= h^{2-1/q} \Norm{\jump{\grad U}}_{\leb{p}(\partial K)}.
         \end{split}
     \end{equation}
\end{Theorem}

\begin{proof}
We may immediately take
$v = \bar{z} + u - U$ in the dual problem since $U=0$ and $u \geq 0$
on $\cA_{U}$.  We therefore have

\begin{equation} \label{eq:negative_error_inequality}
  \int_{\W} \max(0, U-u)^p
  =
  \Norm{(u-U)_-}^p_{\leb{p}(\W)}
  \leq
  a(u - U, \bar{z} ).
\end{equation}

Since we know that $U=0$ on $\cA_{U}$ and
non-negative on $\partial \W$, and since we also have $\bar{z} = 0$ on
$\cA_{U}$ and non-positive on $\Omega$, we can choose
sufficiently small $s>0$ so that $U \pm s \widecheck{\Pi}(\bar{z}) \in \cK_h$.
Taking $\Phi = U \pm s \widecheck{\Pi}(\bar{z})$ as test functions in the discrete problem \eqref{eq:discrete_form} gives

\begin{equation}
    \begin{split}
        a\qp{U, \widecheck{\Pi}(\bar{z})}
        &\geq
        \ltwop{f}{\widecheck{\Pi}(\bar{z})}, \\
        a\qp{U, -\widecheck{\Pi}(\bar{z})}
        &\geq
        -\ltwop{f}{\widecheck{\Pi}(\bar{z})},
    \end{split}
\end{equation}
and therefore we must have

\begin{equation}
    a\qp{U, \widecheck{\Pi}(\bar{z})}
    =
    \ltwop{f}{\widecheck{\Pi}(\bar{z})}.
\end{equation}
Choosing $v = u - \widecheck{\Pi}(\bar{z}) \in \cK$ in \eqref{eq:weak_form} we also
have

\begin{equation}
     a\qp{u, -\widecheck{\Pi}(\bar{z})}
    \geq
    -\ltwop{f}{\widecheck{\Pi}(\bar{z})}.
\end{equation}
Together with \eqref{eq:negative_error_inequality}, we have shown

\begin{equation}\label{eq:negative_error_inequality_2}
    \Norm{(u-U)_-}^p_{\leb{p}(\W)}
  \leq
  a\qp{u - U, \bar{z} - \widecheck{\Pi}(\bar{z})}.
\end{equation}

To introduce the problem data, $f$, of the primal problem, we choose $v=u + \widecheck{\Pi}(\bar{z}) - \bar{z}$ as test function in \eqref{eq:weak_form}.
Since $\widecheck{\Pi}(\bar{z})$ lies between the (non-positive) $\bar{z}$ and zero, this function lies in $\cK$, and we obtain

\begin{equation}
  a\qp{u,\widecheck{\Pi}(\bar{z}) - \bar{z}}
  \geq
  \ltwop{f}{\widecheck{\Pi}(\bar{z}) - \bar{z}},
\end{equation}
or, equivalently.

\begin{equation}
  a\qp{u,  \bar{z} - \widecheck{\Pi}(\bar{z})}
  \leq
  \ltwop{f}{\bar{z} - \widecheck{\Pi}(\bar{z})}.
\end{equation}
Inserting this in \eqref{eq:negative_error_inequality_2}, we now have

\begin{equation}
  \Norm{(u-U)_-}^p_{\leb{p}(\W)}
  \leq
  \ltwop{f}{\bar{z} - \widecheck{\Pi}(\bar{z})}
  -
  a\qp{U, \bar{z} - \widecheck{\Pi}(\bar{z})}.
\end{equation}
The proof now proceeds exactly as in Theorem \ref{thm:l4:pos_bound}.
\end{proof}

\section{Two-sided approximation} \label{sec:bilateral_approx}

In this section we develop the bound preserving interpolant used to prove the a posteriori error bounds in \S \ref{sec:apost}. We construct an interpolant that is bounded from above by a particular function in \S \ref{sec:from_above}. Then, in \S \ref{sec:bilateral}, we introduce a new interpolant that satisfies bilateral bounds and prove that it enjoys the same optimal approximation properties as the Lagrange interpolant.

Let $\{ x_i \}_{i=1}^N$ denote the vertices of the triangulation
$\T{}$ and let $\phi_i \in \fes$ be the $i$-th canonical basis function,
with $\phi_i(x_j) = \delta_{ij}$ for $i,j = 1, \dots, N$. Let
\begin{equation}
  \widehat x_j := \bigcup \{ K\in\T{} : \text{supp}(\phi_j) \cap K \neq \emptyset \}.
\end{equation}
We also recall that $\cI$ denotes the (piecewise linear) nodal Lagrange interpolation operator.
Let $q \in (1, \tfrac 43)$ and assume we have a function $0 \leq
z\in\sob{2}{q}(\W)$.  Then the aim of this section is to examine the
question of existence of an interpolation operator $\binterp :
\sob{2}{q}(\W) \to \fes$ satisfying both
\begin{equation}
  \begin{split}
    0 \leq \binterp(z) &\leq z \text{ and }
    \\
    \Norm{z - \binterp(z)}_{\leb{q}(\W)}
    &\leq
    C h^2 \Norm{z}_{\sob{2}{q}(\W)}.
  \end{split}
\end{equation}

In other words, we wish to show that there exists a two-sided
approximation that also has the optimal approximation properties
enjoyed by the Lagrange interpolant (see Proposition
\ref{lem:interpolation}).  This form of approximation theory arises in
many areas of finite element analysis. In particular, the optimal
approximation of non-smooth functions is a non trivial task relying on
local averaging, see Cl\'ement \cite{Clement1975}, later extended to
zero boundary values in \cite{scottzhang1990finite}.

\subsection{From below}

In this section we discuss positivity preserving interpolation. We note that since we work in two spatial dimensions, and since $q\in
  (1,\tfrac 43)$, point
  values are well defined in $\sob{2}{q}(\W)$.
In this case, one can simply consider the Lagrange interpolant, which
has the properties
\begin{equation}
  \cI z \geq 0 \text{ if } z \geq 0,
\end{equation}
see Figure \ref{fig:lag}, as well as optimal approximation locally and
globally:
\begin{equation}
  \Norm{z - \cI z}_{\leb{q}(\W)} \leq C h^2 \Norm{z}_{\sob{2}{q}(\W)}.
\end{equation}

\begin{figure}[h!]
  \begin{subfigure}{0.3\textwidth}
    \begin{center}
      \begin{tikzpicture}[thick,scale=0.5, declare function={f(\x)=0.3*(\x-3.5)^3-0.237*\x^2+7;a=1;b=6;c=4.94;}]
        \draw[-stealth] (-0.5,0) -- (6.5,0);
        \draw[-stealth] (0,-0.5) -- (0,6.5);
        \pgfmathsetmacro{\lef}{(3*a+b)/4};
        \pgfmathsetmacro{\mid}{(a+b)/2};
        \pgfmathsetmacro{\rig}{(3*b+a)/4};
        \draw[blue] plot[smooth,domain=1:6] ({\x},{f(\x)});
        \draw[dashed] (a,0) node[below]{$a$} |- (0,{f(a)}) node[left] {$0$};
        \draw[dashed] (b,0) node[below]{$b$} -- (b, {f(\lef)});
        \draw[dashed] (b, {f(\lef)}) -- (0, {f(\lef)}) node[left]{${\max(z)}$}  ;
        \draw ({a},{f(a)}) -- ({\lef},{f(\lef)});
        \draw ({\lef},{f(\lef)}) -- ({\mid},{f(\mid)});
        \draw ({\mid},{f(\mid)}) -- ({\rig},{f(\rig)});
        \draw ({\rig},{f(\rig)}) -- ({b},{f(b)});
        \node at ({a},{f(a)}) {\pgfuseplotmark{*}};
        \node at ({\lef},{f(\lef)}) {\pgfuseplotmark{*}};
        \node at ({\mid},{f(\mid)}) {\pgfuseplotmark{*}};
        \node at ({\rig},{f(\rig)}) {\pgfuseplotmark{*}};
        \node at ({b},{f(b)}) {\pgfuseplotmark{*}};
        \draw[dashed,name path=hori] (a,{f(a)}) -- (b,{f(a)});
      \end{tikzpicture}
    \end{center}
    \caption{\label{fig:lag} Approximation with the piecewise linear
      Lagrange interpolant, $\cI(z)$. Notice it is
      positive. \phantom{this text ensures the figure aligns
        properly. EVEN MORE HERE}}
  \end{subfigure}
  \hfill
  \begin{subfigure}{0.3\textwidth}
    \begin{center}
      \begin{tikzpicture}[thick,scale=0.5, declare function={f(\x)=0.3*(\x-3.5)^3-0.237*\x^2+7;a=1;b=6;c=4.94;}]
        \draw[-stealth] (-0.5,0) -- (6.5,0);
        \draw[-stealth] (0,-0.5) -- (0,6.5);
        \pgfmathsetmacro{\lef}{(3*a+b)/4};
        \pgfmathsetmacro{\mid}{(a+b)/2};
        \pgfmathsetmacro{\rig}{(3*b+a)/4};
        \draw[blue] plot[smooth,domain=1:6] ({\x},{f(\x)});
        \draw[dashed] (a,0) node[below]{$a$} |- (0,{f(a)}) node[left] {$0$};
        \draw[dashed] (b,0) node[below]{$b$} -- (b, {f(\lef)});
        \draw[dashed] (b, {f(\lef)}) -- (0, {f(\lef)}) node[left]{$\max(z)$}  ;
        \draw ({a},{f(a)}) -- ({\lef},{f(\lef)});
        \draw ({\lef},{f(\lef)}) -- ({\mid},{f(\mid)-0.1});
        \draw ({\mid},{f(\mid)-0.1}) -- ({\rig},{f(\rig)-0.6});
        \draw ({\rig},{f(\rig)-0.6}) -- ({b},{f(b)-0.6});
        \node at ({a},{f(a)}) {\pgfuseplotmark{*}};
        \node at ({\lef},{f(\lef)}) {\pgfuseplotmark{*}};
        \node at ({\mid},{f(\mid)-0.1}) {\pgfuseplotmark{*}};
        \node at ({\rig},{f(\rig)-0.6}) {\pgfuseplotmark{*}};
        \node at ({b},{f(b)-0.6}) {\pgfuseplotmark{*}};
        \draw[dashed,name path=hori] (a,{f(a)}) -- (b,{f(a)});
      \end{tikzpicture}
    \end{center}
    \caption{\label{fig:lagmod} Approximation with a nonlinear
      interpolant $\interp(z)$. Notice it is bounded above by the
      function, but is no longer bounded below by zero.}
  \end{subfigure}
  \hfill
  \begin{subfigure}{0.3\textwidth}
    \begin{center}
      \begin{tikzpicture}[thick,scale=0.5,declare function={f(\x)=0.3*(\x-3.5)^3-0.237*\x^2+7;a=1;b=6;c=4.94;}]
        \draw[-stealth] (-0.5,0) -- (6.5,0);
        \draw[-stealth] (0,-0.5) -- (0,6.5);
        \pgfmathsetmacro{\lef}{(3*a+b)/4};
        \pgfmathsetmacro{\mid}{(a+b)/2};
        \pgfmathsetmacro{\rig}{(3*b+a)/4};
        \draw[blue] plot[smooth,domain=1:6] ({\x},{f(\x)});
        \draw[dashed] (a,0) node[below]{$a$} |- (0,{f(a)}) node[left] {$0$};
        \draw[dashed] (b,0) node[below]{$b$} -- (b, {f(\lef)});
        \draw[dashed] (b, {f(\lef)}) -- (0, {f(\lef)}) node[left]{$\max(z)$}  ;
        \draw ({a},{f(a)}) -- ({\lef},{f(\lef)});
        \draw ({\lef},{f(\lef)}) -- ({\mid},{f(a)});
        \draw ({\mid},{f(a)}) -- ({\rig},{f(a)});
        \draw ({\rig},{f(a)}) -- ({b},{f(a)});
        \node at ({a},{f(a)}) {\pgfuseplotmark{*}};
        \node at ({\lef},{f(\lef)}) {\pgfuseplotmark{*}};
        \node at ({\mid},{f(a)}) {\pgfuseplotmark{*}};
        \node at ({\rig},{f(a)}) {\pgfuseplotmark{*}};
        \node at ({b},{f(a)}) {\pgfuseplotmark{*}};
        \draw[dashed,name path=hori] (a,{f(a)}) -- (b,{f(a)});
      \end{tikzpicture}
    \end{center}
    \caption{\label{fig:bilat} Approximation with the modified
      bilateral approximation $\binterp(z)$. Notice that $0 \leq
      \binterp(z) \leq z$. \phantom{this text ensures the figure aligns
        properly.}}
  \end{subfigure}
  \caption{An illustration of three piecewise linear operators, black,
    applied to the same function, blue, that satisfies $z>0$. The
    Lagrange interpolant, $\cI(z) \geq 0$, the nonlinear interpolant
    $\interp(z) \leq z$ and the bilateral approximation $0 \leq
    \binterp(z) \leq z$. }
\end{figure}

If point values do not exist, the approximation is
considerably more involved, although has been tackled in
\cite{ChenObstacle2000} where an interpolant is constructed through
local mean-values of the function. It was shown in
\cite{ChenObstacle2000} that the constructed interpolant is
$\leb{q}$-stable, second order accurate and linear. See also
\cite{Veeser:2016,Veeser:2019} for related ideas using a nonlinear
interpolation operator \cite{DeVore:1998}.

\subsection{From above}\label{sec:from_above}

In this section we examine the design of interpolants that are bounded
from above by the function to be interpolated. This means that we seek an
interpolation operator $\interp$ that satisfies
$\interp(z) \leq z$. In this case we can see the Lagrange interpolant
does not satisfy the requirements, see Figure \ref{fig:lag}. Indeed,
any strictly convex function will see the Lagrange interpolant violate
this bound. It can, however, be modified. Indeed, consider the
function:
\begin{equation}
  \label{eq:nonlin}
  \interp(z)(x) = \sum_{i=1}^N \qp{\cI z(x_i)
    -
    \max_{y\in \widehat x_i} (\cI(z)(y) - z(y))}\phi_i(x) =: \cI(z)(x) - R(x).
\end{equation}

\begin{lemma}
  For $0 \leq z \in \sob{2}{q}(\W)$ the approximation $\interp(z)$
  defined in (\ref{eq:nonlin}) satisfies
  \begin{equation}
    \interp(z) \leq z \text{ in } \W.
  \end{equation}
\end{lemma}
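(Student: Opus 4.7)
\medskip

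The plan is to verify the bound pointwise by fixing an arbitrary $x \in \W$, locating the unique element $K \in \T{}$ containing $x$, and exploiting the partition of unity property of the basis functions $\{\phi_i\}$ restricted to $K$. Write $M_i := \max_{y \in \widehat x_i}(\cI(z)(y) - z(y))$ so that by definition $\interp(z) = \cI(z) - \sum_{i=1}^N M_i \phi_i$.

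\medskip

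First I would observe that since the support of $\phi_i$ restricted to $K$ is nontrivial precisely when $x_i$ is a vertex of $K$, only the vertices of $K$ contribute to the sum at $x$. For each such vertex $x_i$, the patch $\widehat x_i$ contains $K$, so in particular $x \in \widehat x_i$, and therefore by the defining maximum
\begin{equation}
    M_i \;\geq\; \cI(z)(x) - z(x).
\end{equation}
This is the key inequality: the correction $M_i$ at each vertex of $K$ dominates the pointwise overshoot of the Lagrange interpolant at $x$.

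\medskip

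Next, using non-negativity of the hat functions together with the partition of unity $\sum_{x_i \in K} \phi_i(x) = 1$ on $K$, I would multiply the previous inequality by $\phi_i(x) \geq 0$ and sum over the vertices of $K$ to obtain
\begin{equation}
    \sum_{x_i \in K} M_i \phi_i(x) \;\geq\; \bigl(\cI(z)(x) - z(x)\bigr)\sum_{x_i \in K}\phi_i(x) \;=\; \cI(z)(x) - z(x).
\end{equation}
Rearranging this bound gives
\begin{equation}
    \interp(z)(x) \;=\; \cI(z)(x) - \sum_{x_i \in K} M_i \phi_i(x) \;\leq\; z(x),
\end{equation}
and since $x \in \W$ was arbitrary, the conclusion $\interp(z) \leq z$ in $\W$ follows.

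\medskip

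There is no real obstacle here; the argument is a direct verification. The only point requiring a small amount of care is ensuring that the patch $\widehat x_i$ is defined to contain every element adjacent to $x_i$ (as it is in the paper), so that $K \subset \widehat x_i$ for each vertex $x_i$ of $K$, which is what allows the pointwise overshoot at $x \in K$ to be controlled by the vertex-wise maxima $M_i$. Note that this argument uses neither the sign of $z$ nor its regularity beyond the need for the pointwise evaluations implicit in $\cI(z)$ and $M_i$ to make sense; the positivity hypothesis $z \geq 0$ only becomes relevant when pairing this one-sided bound with the complementary lower bound in the bilateral construction of \S\ref{sec:bilateral}.
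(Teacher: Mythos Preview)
Your argument is correct and is essentially a carefully unpacked version of the paper's proof, which compresses your steps into the single line $\interp(z) = \cI(z) - R \leq \cI(z) - (\cI(z) - z) = z$; the inequality $R(x) \geq \cI(z)(x) - z(x)$ that the paper asserts ``by definition of $R$'' is exactly what you establish via the partition of unity and the inclusion $K \subset \widehat x_i$.
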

\begin{proof}
  Note that by definition of $R$ we have
  \begin{equation}
    \interp(z) = \cI(z) - R \leq \cI(z) - (\cI(z) - z) = z.
  \end{equation}
\end{proof}
This is a nonlinear interpolant since for general
$u,v\in\sob{2}{q}(\W)$
\begin{equation}
  \interp(u + v) \neq \interp(u) + \interp(v).
\end{equation}
This means we cannot directly apply Bramble-Hilbert to obtain optimal
approximation under minimal regularity. Instead we must take a
different approach.


\begin{Theorem}[Optimal approximation]
  \label{the:optimalapprox}
  Suppose $z\in\sob{2}{q}(\W)$, for $q\in (1,\tfrac 43)$. Then there
  exists $C > 0$ independent of $h$ such that for all elements
  $K\in\T{}$ the approximation $\interp(z)$ defined through
  (\ref{eq:nonlin}) satisfies
  \begin{equation}
    \Norm{z - \interp(z)}_{\leb{q}(K)}
    \leq
    C h^2 \norm{z}_{\sob{2}{q}(\widehat{K})}.
  \end{equation}
\end{Theorem}
\begin{proof}
  To begin, we note
  \begin{equation}\label{eq:split_error}
    \Norm{z - \interp(z)}_{\leb{q}(K)}
    \leq
    \Norm{z - \cI(z)}_{\leb{q}(K)}
    +
    \Norm{R}_{\leb{q}(K)}.
  \end{equation}
  we have from Theorem \ref{lem:interpolation} that
  \begin{equation}\label{eq:interp_part}
    \Norm{z - \cI(z)}_{\leb{q}(K)}
    \leq
    C h^2 \norm{z}_{\sob{2}{q}(K)}.
  \end{equation}
  To control the second term, notice that since $R|_K \in
  \mathcal{R}^1(K)$, for a $K\in\T{}$ we can write a lumped
  $\leb{q}$-norm to see that (cf. \cite[Proposition
    12.5]{ern2021finite})
  \begin{equation}
    \label{eq:Rterm}
    \begin{split}
      \Norm{R}_{\leb{q}(K)}^q
      &\leq
      C h^2 \sum_{x_i \in K} \norm{R(x_i)}^q
      \\
      &\leq
      C{ h^2} \sum_{x_i \in K} \Norm{\cI(z) - z}_{\leb{\infty}(\widehat x_i)}^q.
    \end{split}
  \end{equation}
  
  Now, a further property of the Lagrange interpolant (see  \cite[Corollary 4.4.7]{Brenner2008}) is
  \begin{equation}
    \Norm{z - \cI(z)}_{\leb{\infty}(K)}
    \leq
    C h^{2-\tfrac 2 q} \norm{z}_{\sob{2}{q}(K)}.
  \end{equation}
  Hence substituting into (\ref{eq:Rterm}) we have
  \begin{equation}
    \begin{split}
      \Norm{R}_{\leb{q}(K)}^q
      \leq
      C h^{2q} \norm{z}_{\sob{2}{q}(\widehat K)}^q,
    \end{split}
  \end{equation}
  which, together with \eqref{eq:split_error} and \eqref{eq:interp_part} completes the proof.
\end{proof}

\begin{remark}
  Note the interpolant we construct would not be well defined in three
  spatial dimensions, $n=3$, for the range of values of $q$ considered here, 
  as the construction requires point values which, in turn, requires $z\in\sob{2}{s}(\W)$ for $2s > n$.
\end{remark}

\subsection{Bilateral approximation} \label{sec:bilateral}

There is much less in the literature on the design of two sided
approximations. The only arguments date back to Mosco and Strang
\cite{MoscoStrang:1974,Strang:1974,Strang:1974b} where the authors
consider piecewise linear approximations for dimension $n=1,2,3$ and
$q=2$. Unfortunately these arguments do not appear to extend to the
case at hand.

The difficulty in this problem can bee seen by examining Figure
\ref{fig:lagmod} around where $z\sim 0$. To keep a bilateral
constraint on the approximation, any appropriate function is
\emph{squeezed} and the only mechanism is to force it to zero on a
surrounding patch.

\subsection{Optimal approximation over \(\mathcal{R}^1(K)\)}

We modify $\interp$ (defined in equation \eqref{eq:nonlin}) in the following way at the degrees of freedom:
\begin{equation}
  \binterp(z)(x_i) =
  \begin{cases}
    \interp(z)(x_i) \text{ if }\interp(z) \geq 0 \text{ on } \widehat x_i
    \\
    0 \text{ otherwise,}
  \end{cases}
\end{equation}
as illustrated in Figure \ref{fig:bilat}.

\begin{lemma}\label{prop:one_sided_optimal_approx}
  Let $ 0 \leq z\in\sob 2q(\W)$, $q\in (1,\tfrac 43)$ and define
  \begin{equation}
    \cZ_h := \{ z_h \in \fes : 0 \leq z_h \leq z \}.
  \end{equation}
  Then $\binterp(z) \in \cZ_h$. Further, there exists a constant $C_b > 0$ such that for all elements $K\in\T{}$
  \begin{equation}
    \Norm{z - \binterp(z)}_{\leb{q}(K)}
    \leq
    C_{b} h^2 \norm{z}_{\sob{2}{q}(\widehat K)}.
  \end{equation}
\end{lemma}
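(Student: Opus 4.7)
The plan is first to establish $\binterp(z) \in \cZ_h$ by a pointwise case analysis on each element, and then to derive the approximation bound by inserting $\interp(z)$ as an intermediate function and exploiting Theorem~\ref{the:optimalapprox}.

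For the bilateral bounds, fix $K \in \T{}$. Non-negativity is immediate: each nodal value $\binterp(z)(x_i)$ is either $0$ or $\interp(z)(x_i)$, and in the latter case the triggering condition ``$\interp(z) \ge 0$ on $\widehat x_i$'' (which contains $x_i$) forces $\interp(z)(x_i) \ge 0$; since the local basis functions are non-negative on each element, $\binterp(z) \ge 0$ on $K$. For the upper bound $\binterp(z) \le z$ the key observation is that $\interp(z)$ is linear (resp.\ bilinear) on each element, so if $\interp(z)(x_j) < 0$ at some vertex $x_j$ of $K$ then $\interp(z) < 0$ on an open subset of $K$; since $K \subset \widehat x_i$ for \emph{every} vertex $x_i$ of $K$, every such vertex is then bad and $\binterp(z) \equiv 0 \le z$ on $K$. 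Otherwise $\interp(z)(x_i) \ge 0$ at every vertex of $K$, so the nodal values of $\binterp(z)$ lie in $\{0, \interp(z)(x_i)\} \subset [0, \interp(z)(x_i)]$, and non-negativity of the local basis functions propagates the nodal inequality to $\binterp(z) \le \interp(z) \le z$ throughout $K$.

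For the approximation bound, the plan is to write
\[
\Norm{z - \binterp(z)}_{\leb{q}(K)} \le \Norm{z - \interp(z)}_{\leb{q}(K)} + \Norm{\interp(z) - \binterp(z)}_{\leb{q}(K)},
\]
dispatching the first term by Theorem~\ref{the:optimalapprox}. The function $\interp(z) - \binterp(z)$ is piecewise linear with nodal values supported on the bad vertices of $K$, so by the lumping argument already employed in the proof of Theorem~\ref{the:optimalapprox} (cf.\ \cite[Proposition~12.5]{ern2021finite}) the second term reduces to controlling $|\interp(z)(x_i)|$ at such vertices.

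The main obstacle is this final pointwise bound. Since $\interp(z)(x_i) = z(x_i) - R(x_i)$, it suffices to show that both $z(x_i)$ and $R(x_i)$ are bounded by $C h^{2 - 2/q} \norm{z}_{\sob{2}{q}(\widehat K)}$. The bound on $R(x_i)$ follows at once from its definition and the Brenner--Scott $L^\infty$ Lagrange interpolation estimate \cite[Corollary~4.4.7]{Brenner2008}. For $z(x_i)$ I exploit badness: linearity of $\interp(z)$ on each element in $\widehat x_i$ upgrades the hypothesis ``$\interp(z) < 0$ somewhere on $\widehat x_i$'' to ``$\interp(z)(x_j) < 0$ at some neighbouring vertex $x_j$'', whence $z(x_j) < R(x_j) \le C h^{2 - 2/q} \norm{z}_{\sob{2}{q}(\widehat K)}$. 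The remaining oscillation $|z(x_i) - z(x_j)|$ is then controlled via the Sobolev--H\"older embedding $\sob{2}{q}(\widehat K) \hookrightarrow C^{0,\,2 - 2/q}(\widehat K)$, valid for $q \in (1, 4/3)$ in two dimensions, together with the shape-regularity bound $|x_i - x_j| \le C h$. Inserting $|\interp(z)(x_i)|^q \le C h^{2q - 2} \norm{z}^q_{\sob{2}{q}(\widehat K)}$ into the lumped sum picks up an extra factor $h^2$, yielding $\Norm{\interp(z) - \binterp(z)}_{\leb{q}(K)} \le C h^2 \norm{z}_{\sob{2}{q}(\widehat K)}$; combined with Theorem~\ref{the:optimalapprox} this completes the proof.
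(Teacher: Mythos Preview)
Your verification that $\binterp(z)\in\cZ_h$ is correct and in fact more carefully argued than the paper, which does not spell this out. For the approximation bound the two routes diverge after the common splitting through $\interp(z)$. The paper controls $\interp(z)-\binterp(z)$ by asserting $\Norm{\interp(z)-\binterp(z)}_{\leb q(K)}\le C\Norm{S}_{\leb q(K)}$ with $S:=\max(-\interp(z),0)$, and then uses the pointwise inequality $0\le S=R-\cI z\le R$ (valid since $\cI z\ge 0$) together with the $R$-bound already established in Theorem~\ref{the:optimalapprox}. You instead lump, reduce to the nodal values $|\interp(z)(x_i)|$ at bad vertices, and bound these by combining the $R$-estimate with a neighbouring-vertex argument and H\"older continuity of $z$. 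Your route is more transparent; the paper's is shorter but its first inequality is stated without justification.

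Two small imprecisions in your argument are worth flagging, though neither affects the downstream application in \S\ref{sec:apost}. First, the neighbouring vertex $x_j$ lies in $\widehat x_i$, but $\widehat x_j$ need not be contained in $\widehat K$; your bound on $R(x_j)$ therefore naturally lands on a second-order patch rather than $\widehat K$. Second, the oscillation functional $z\mapsto z(x_i)-z(x_j)$ vanishes on constants but not on affines, so the scaled Sobolev--H\"older embedding delivers the full norm $\Norm{z}_{\sob 2q}$ on the patch (with lower-order terms carrying negative powers of $h$), not the seminorm $\norm{z}_{\sob 2q(\widehat K)}$ that the lemma records. Both points are harmless once the estimate is summed globally and combined with Proposition~\ref{pro:reg}, which controls $\Norm{z}_{\sob 2q(\W)}$.
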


  \begin{proof}
    To begin, notice that due to the regularity of $z$ we can find a
    constant $C$ independent of $h$ such that
    \begin{equation}
      \Norm{z - \binterp(z)}_{\leb{q}(K)}
      \leq
      \Norm{z - \interp(z)}_{\leb{q}(K)}
      +
      C \Norm{S}_{\leb{q}(K)},
    \end{equation}
    where
    \begin{equation}
      S
      =
      \max\qp{-\interp(z), 0}.
    \end{equation}
    Control of the first term is given in Theorem
    \ref{the:optimalapprox}. For the second, notice that $S$ only has
    support when $\cI(z) > z$ in a vicinity of when $z$ vanishes. This
    can only happen if $z$ is locally convex. Further, since $\cI(z)
    > 0$ whenever $z>0$ we have that
    \begin{equation}
      \norm{\max\qp{-\interp(z),0}}
      \leq
      \norm{R}
    \end{equation}
    and hence
    \begin{equation}
      \Norm{S}_{\leb{q}(K)}
      \leq
      \Norm{R}_{\leb{q}(K)}
      \leq
      C h^2 \norm{z}_{\sob{2}{q}(\widehat K)},
    \end{equation}
    concluding the proof.
  \end{proof}

  \begin{remark}
  Two sided bounds are conjectured in \cite{MoscoStrang:1974} for
  higher dimension, at least with $q=2$, although their results only
  seem to hold with dimension $n\leq 3$.  The conjecture we make is
  that optimal bilateral approximations in $\leb{q}(\W)$ are only
  possible when $2-\tfrac n q > 0$, hence we believe our operator is a
  constructive example of that studied in \cite{MoscoStrang:1974},
  i.e., is valid for $n=3$ and $q=2$.
\end{remark}

\section{Numerical Examples}
\label{sec:num}

In this section, we present numerical results to demonstrate the
effectiveness of the error estimate and adaptive routine against an
exact solution within the framework of \S\ref{sec:setup} and
\ref{sec:apost}, that is, convex $\Omega \subseteq \mathbb{R}^2$ with
polygonal boundary.  We then present some more challenging situations
in which the theory of the preceding sections may fail, but in which the
error estimate may still prove useful.  In particular, we test the
estimate on a non-convex domain in $\mathbb{R}^2$ with a re-entrant
corner as well as a three-dimensional example.

All simulations presented here are conducted using \texttt{deal.II},
an open source C\texttt{++} software library providing tools for
adaptive finite element computations \cite{dealII91}.  We note here
that \texttt{deal.II} uses quadrilateral meshes.  Since all coarse
meshes are uniform meshes consisting of squares, and refinement is by
quadrisection, regularity of the mesh does not degrade under heavy
refinement.  At each stage of the iterative process used to solve the
variational inequality, a preconditioned conjugate gradient method is
used to solve the algebraic problem.  As the mesh is refined, hanging
nodes are created, and are constrained so that the resulting numerical
solution is continuous.

\subsection{Example 1}\label{sec:l4:ex_1}

For our first example, we choose the exact solution used by the authors of
\cite{christof1754finite}.
Let $r, \theta$ denote polar coordinates centred at $(0.5, 0)$, that is

\begin{equation}
r(x,y) = ((x-0.5)^2 + y^2)^{1 \slash 2},
\end{equation}

\begin{equation}
\theta(x,y) = \arccos\left(\frac{x - 0.5}{r}\right).
\end{equation}
We define the function

\begin{equation}
\tilde{u}(r, \theta) : = - r ^{3 \slash 2} \sin (\tfrac{3}{2} \theta),
\end{equation}
and define $\psi$ to be a ninth order spline defined by endpoint values and gradients as follows (see also \cite[\S 6]{christof1754finite}).
We impose the following values to determine $\psi$.

\begin{equation}
\psi(0) =1, \,\,  \psi '(0) = ... = \psi ''''(0) = \psi(0.45) = \psi '(0.45) = ... = \psi ''''(0.45) = 0,
\end{equation}
and $\psi(s) = 0$ for any $s < 0 $ or $s \geq 0.45$.  Then if we
choose $f$ accordingly, $10 \psi \tilde{u}$ solves
\eqref{eq:weak_form}.  {We also point out that this
  exact solution satisfies \hyperref[struct_of_contact_set]{Condition
    $(\mathbf{A})$}, since by the definition its contact set is}

\begin{equation*}
  {\partial \W \backslash \{(x,0) : 0.05\leq x \leq 0.5\},}
\end{equation*}
{that is, a single connected component consisting of an open interval
in $\partial \W$.} All numerical approximations satisfy
\hyperref[def:condition_A_h]{Condition $(\mathbf a_h)$}. An example of the
verification (see Remark \ref{rem:verification_of_condition}) of this condition is shown in Figure \ref{fig:plot_contact_set},
where the discrete contact set for Example 1 is shown for varying levels of mesh
refinement.

The problem is initialised on a coarse uniform mesh with $h = 1 \slash
8$.  For the adaptive algorithm we use D\"orfler marking (see
\cite{Dorf1996}) with refine fraction $\beta = 0.9$ and no coarsening.
A numerical approximation to problem \eqref{eq:weak_form} with datum
$f$ chosen so that $10 \psi \tilde{u}$ is the exact solution is shown
in Figure \ref{eq:sol_1}, represented on the final mesh
$\mathcal{T}^{15}$ produced by the adaptive algorithm, consisting of
around 80,000 degrees of freedom.  The progression of the
$\leb{4}(\W)$-error, $\Norm{u - U}_{\leb{4}(\W)}$, and error estimate
$\eta^{1 \slash 4}$ as defined in equation \eqref{eq:l4:pos_bound}
under adaptive mesh refinement is shown in Figure
\ref{fig:strategy_comparison}.  For a given number of degrees of
freedom, the adaptive algorithm reduces the error and appears to give
a slightly better convergence rate than uniformly refining the mesh in
terms of degrees of freedom.  We observe that the over-estimation
factor (or effectivity index as it is often called) of the error
estimate, is approximately 20.  As expected from the theory, this
factor is approximately constant once the asymptotic regime is
reached.  Comparing with Figure \ref{eq:adaptive_uniform_1}, we see
that asymptotically, error is lower under adaptive refinement than
uniform in terms of degrees of freedom, although both are optimal.

A selection of adaptive meshes are shown in Figures \ref{ex_1_mesh1}-\ref{ex_1_mesh4}.
These meshes show refinement around the main features of the solution.
In particular, the mesh is heavily refined around large gradients, along with significant refinement where the boundary conditions change type and constraints are active.

Finally, we demonstrate the ability of adaptive mesh refinement, driven by our error estimate, to compensate for suboptimal convergence in $\leb{p}(\W)$ for large $p$, observed in \cite[Table 2]{christof1754finite}. 
We set $p = 32$ and adapt the mesh using D\"orfler marking with refine fraction $\beta = 0.8$ and no coarsening.
In Figure \ref{fig:l32_rates}, we see that the error under uniform mesh refinement is asymptotically suboptimal, but that optimal rates in terms of number of degrees of freedom is achieved with the adaptive scheme. 
Spatial distribution of the error estimate is shown in Figure \ref{fig:l32_estimate}.


\begin{figure}[ht!]
    \centering 
    \begin{subfigure}{0.32\textwidth}
        \includegraphics[width=\linewidth]{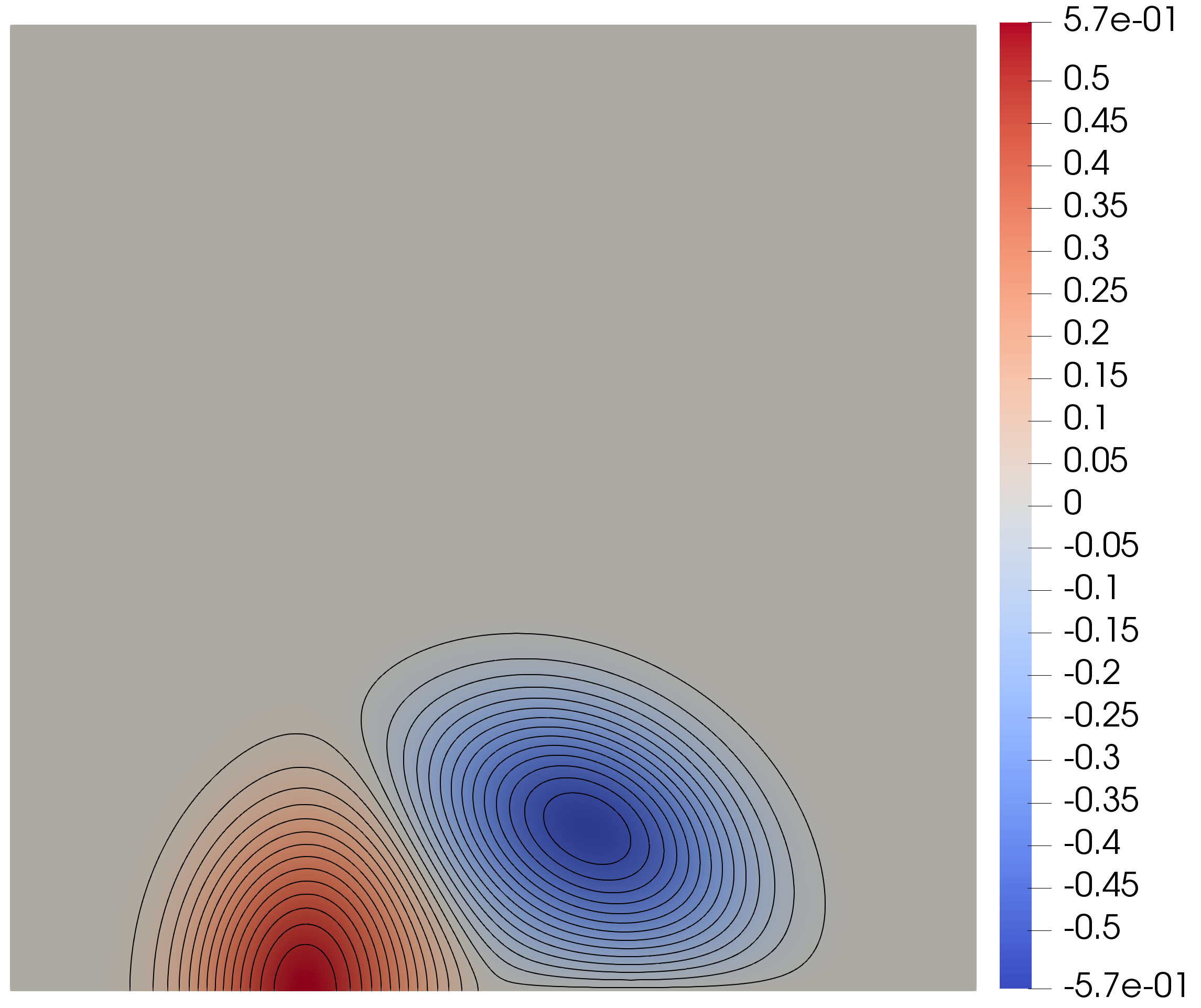}
        \caption{Contour plot of adaptive approximation.
            \label{eq:sol_1}
        }
    \end{subfigure}\hfil 
    \begin{subfigure}{0.32\textwidth}
        \includegraphics[width=\linewidth]{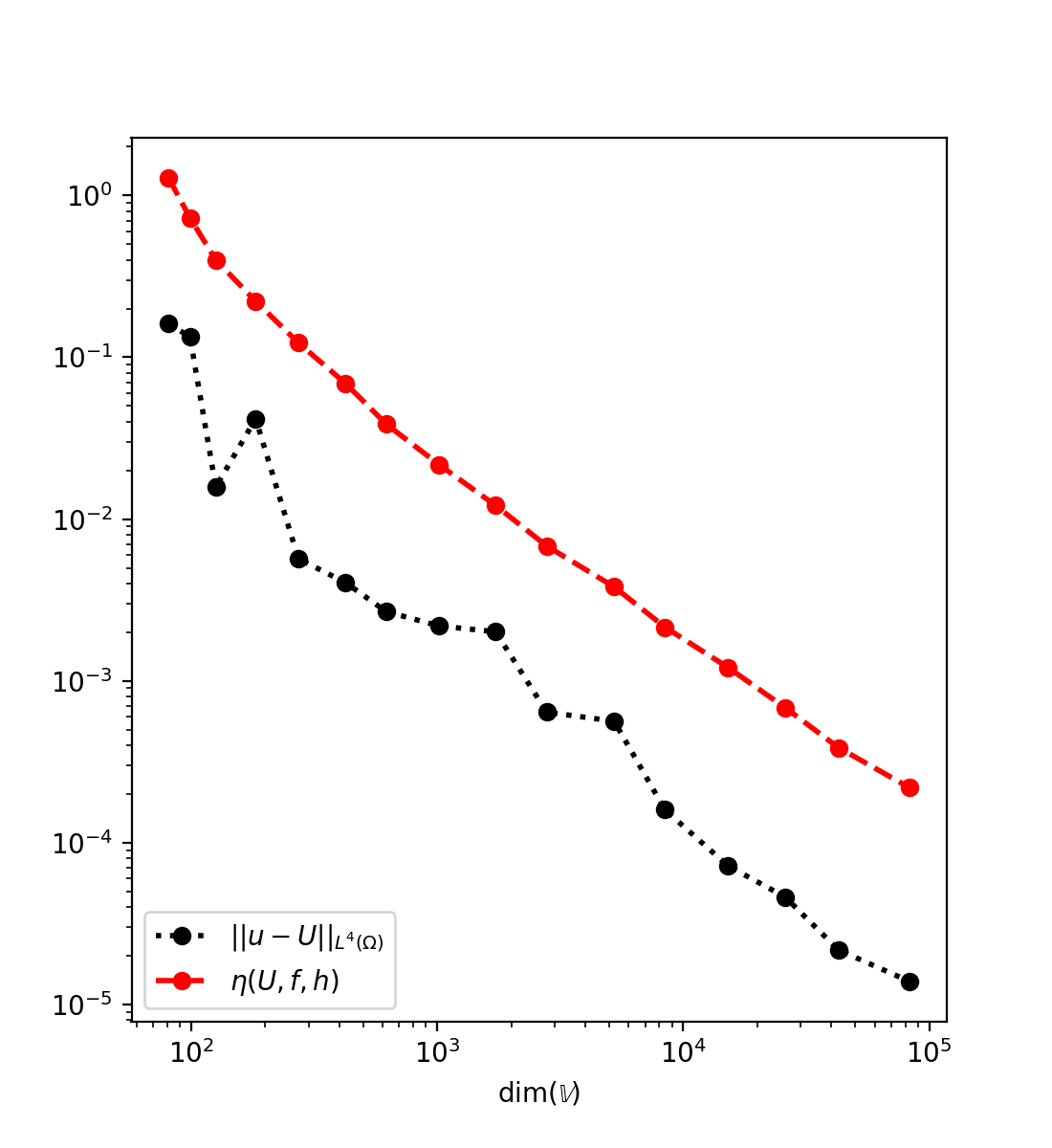}
        \caption{Behaviour of error estimate.
            \label{fig:strategy_comparison}
        }
    \end{subfigure}\hfil 
    \\
    \begin{subfigure}{0.24\textwidth}
        \includegraphics[width=\linewidth]{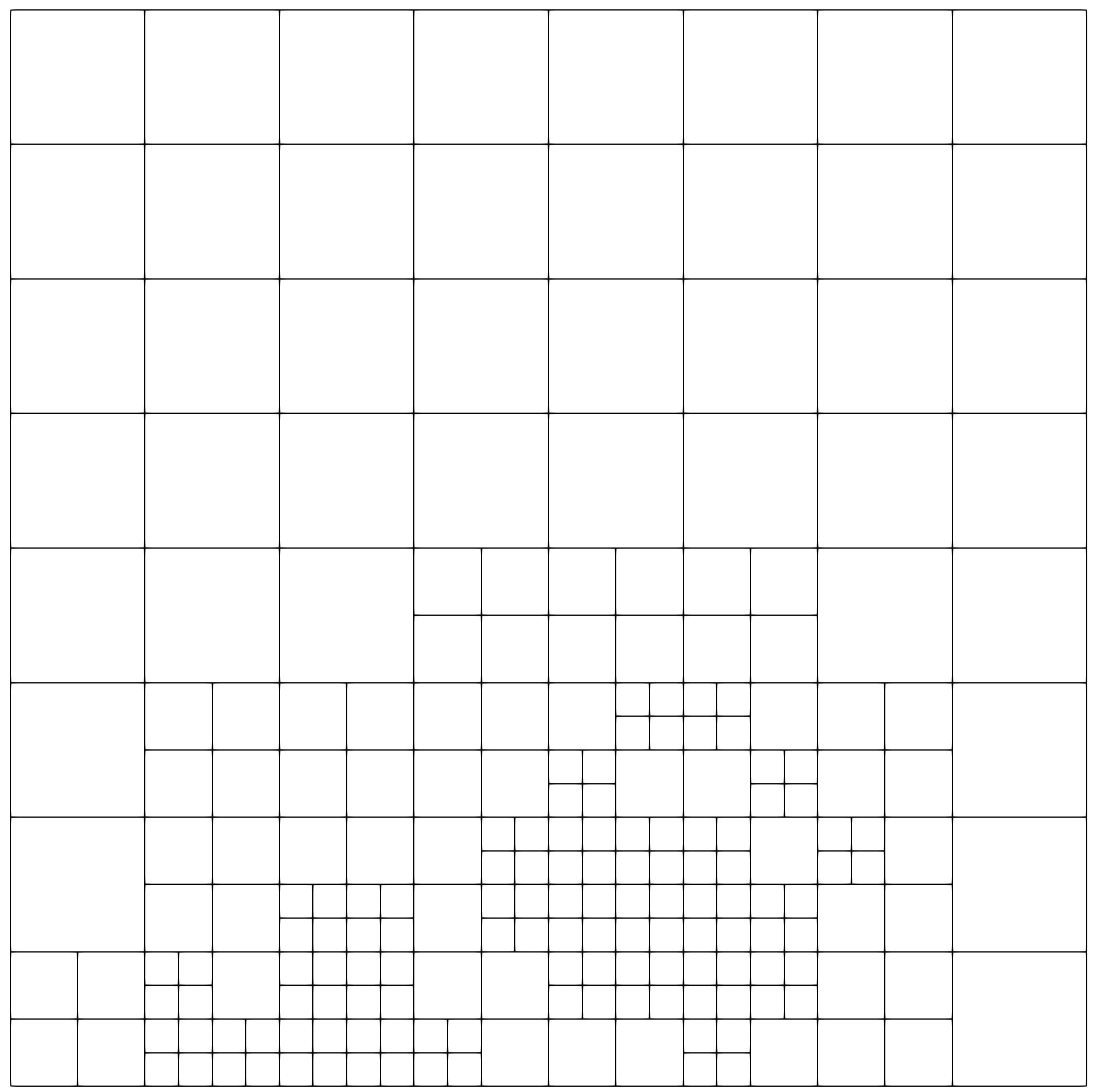}
        \caption{$\mathcal{T}^4$
            \label{ex_1_mesh1}}
    \end{subfigure}\hfil 
    \begin{subfigure}{0.24\textwidth}
        \includegraphics[width=\linewidth]{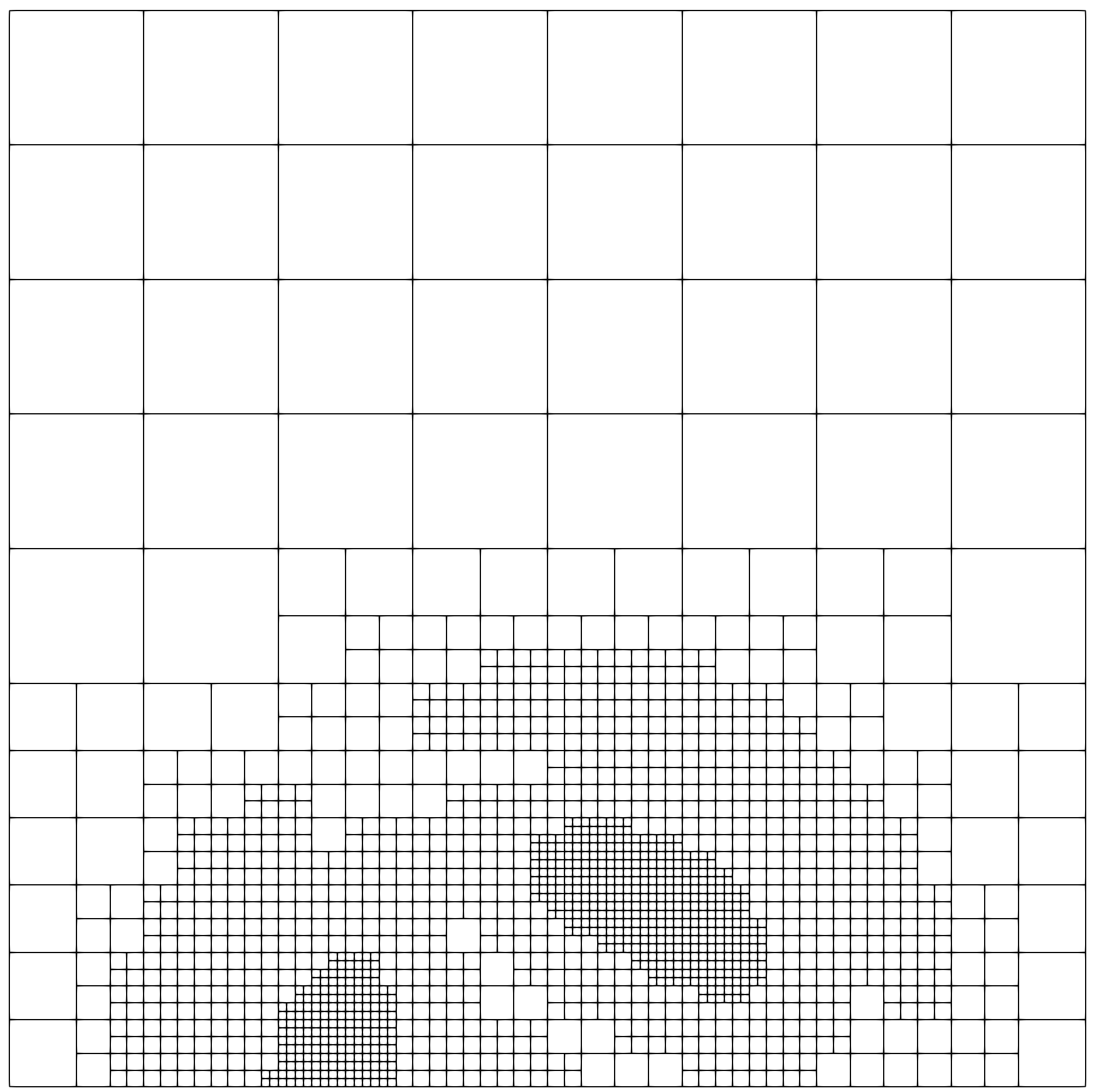}
        \caption{$\mathcal{T}^8$}
    \end{subfigure}\hfil 
    \begin{subfigure}{0.24\textwidth}
        \includegraphics[width=\linewidth]{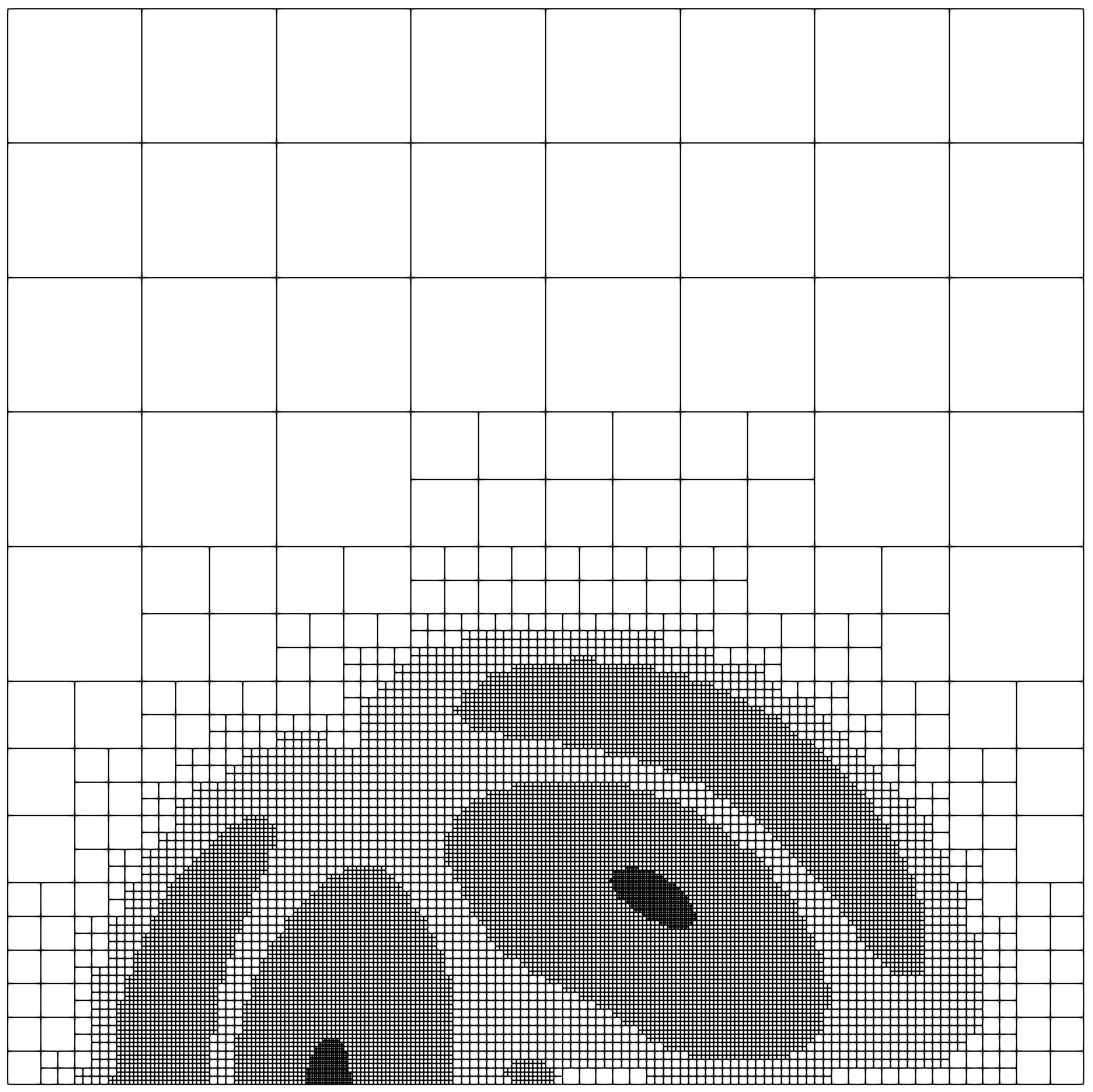}
        \caption{$\mathcal{T}^{12}$}
    \end{subfigure}\hfil 
    \begin{subfigure}{0.24\textwidth}
        \includegraphics[width=\linewidth]{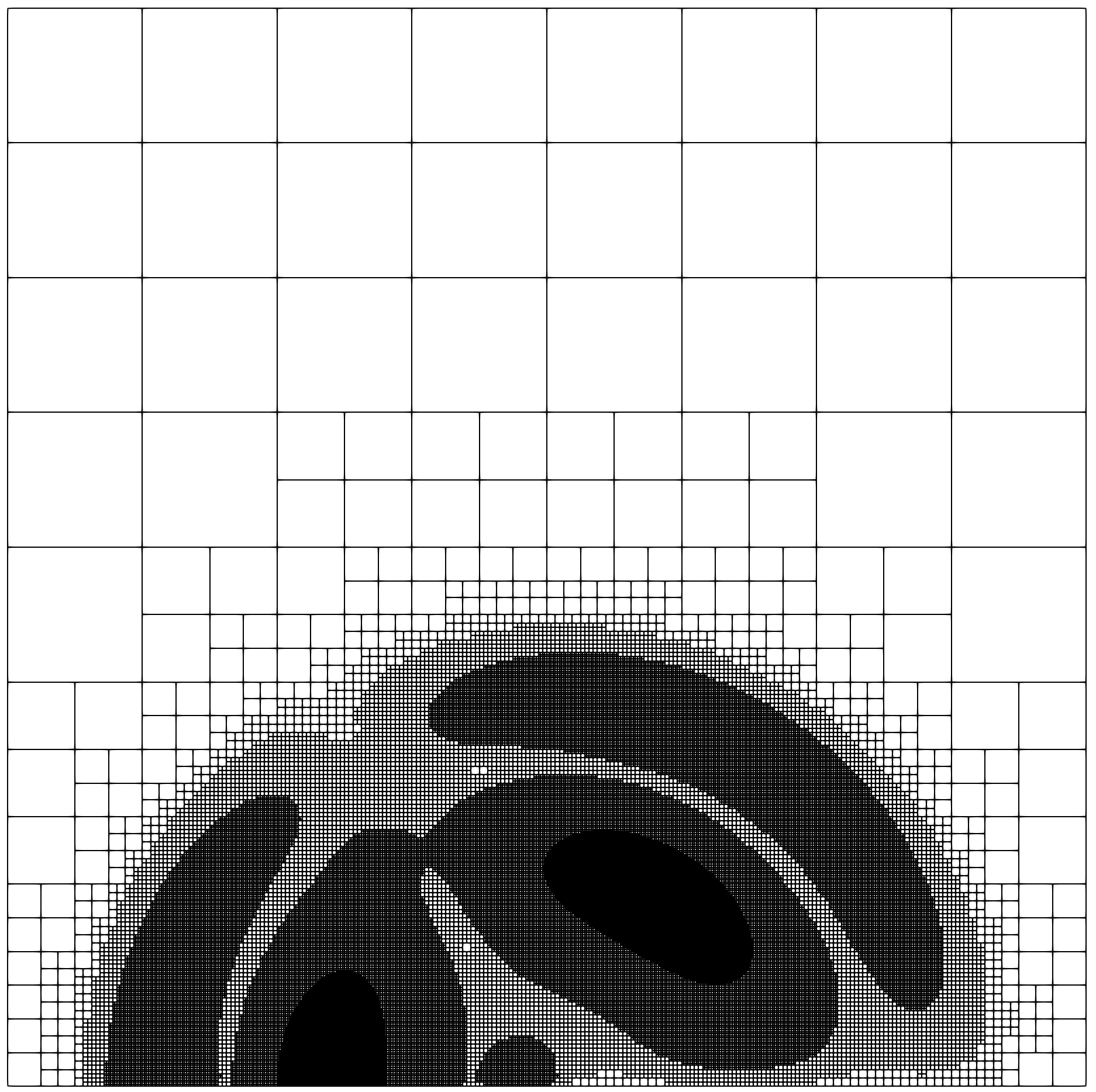}
        \caption{$\mathcal{T}^{15}$\label{ex_1_mesh4}}
    \end{subfigure}\hfil
    \caption{{Example 1 \S\ref{sec:l4:ex_1}, contour plot
        and various iterations of the adaptive mesh $\mathcal{T}^i$ of
        an approximation to a function $u \in \sobh{2}(\W)$. Figure
        \ref{fig:strategy_comparison} shows a double-logarithmic plot
        comparing the error in $\leb{4}(\W)$ (black line) with the
        error estimate (red line).}}
\end{figure}

\begin{figure}[ht!]
  \centering
  \begin{subfigure}{0.35\textwidth}
      \includegraphics[width=\linewidth]{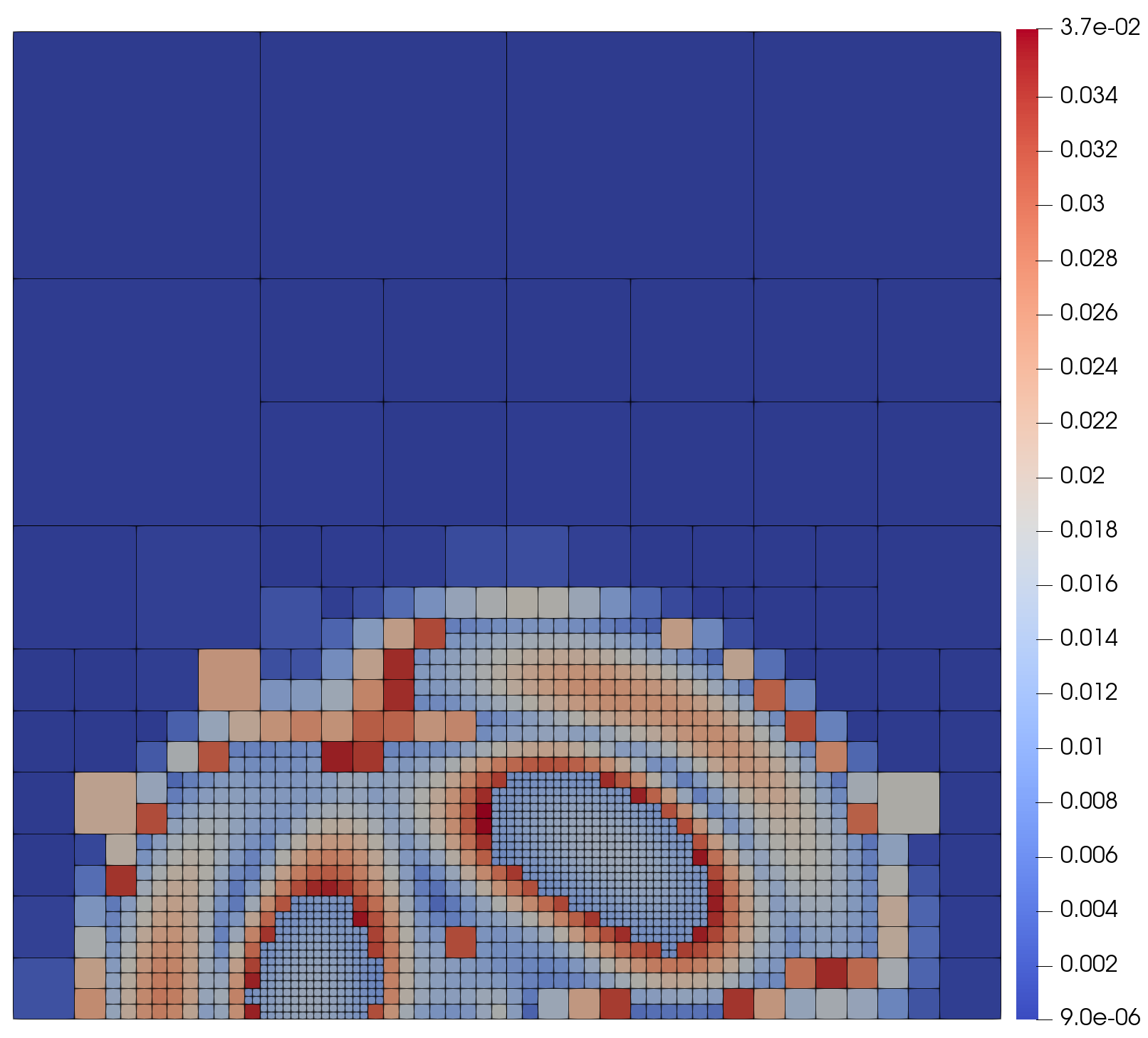}
      \caption{Example 1. Spatial distribution of contributions to $\leb{32}(\W)$ error estimate .}
     \label{fig:l32_estimate}
   \end{subfigure}\hfil
   \begin{subfigure}{0.35\textwidth}
    \includegraphics[width=\linewidth]{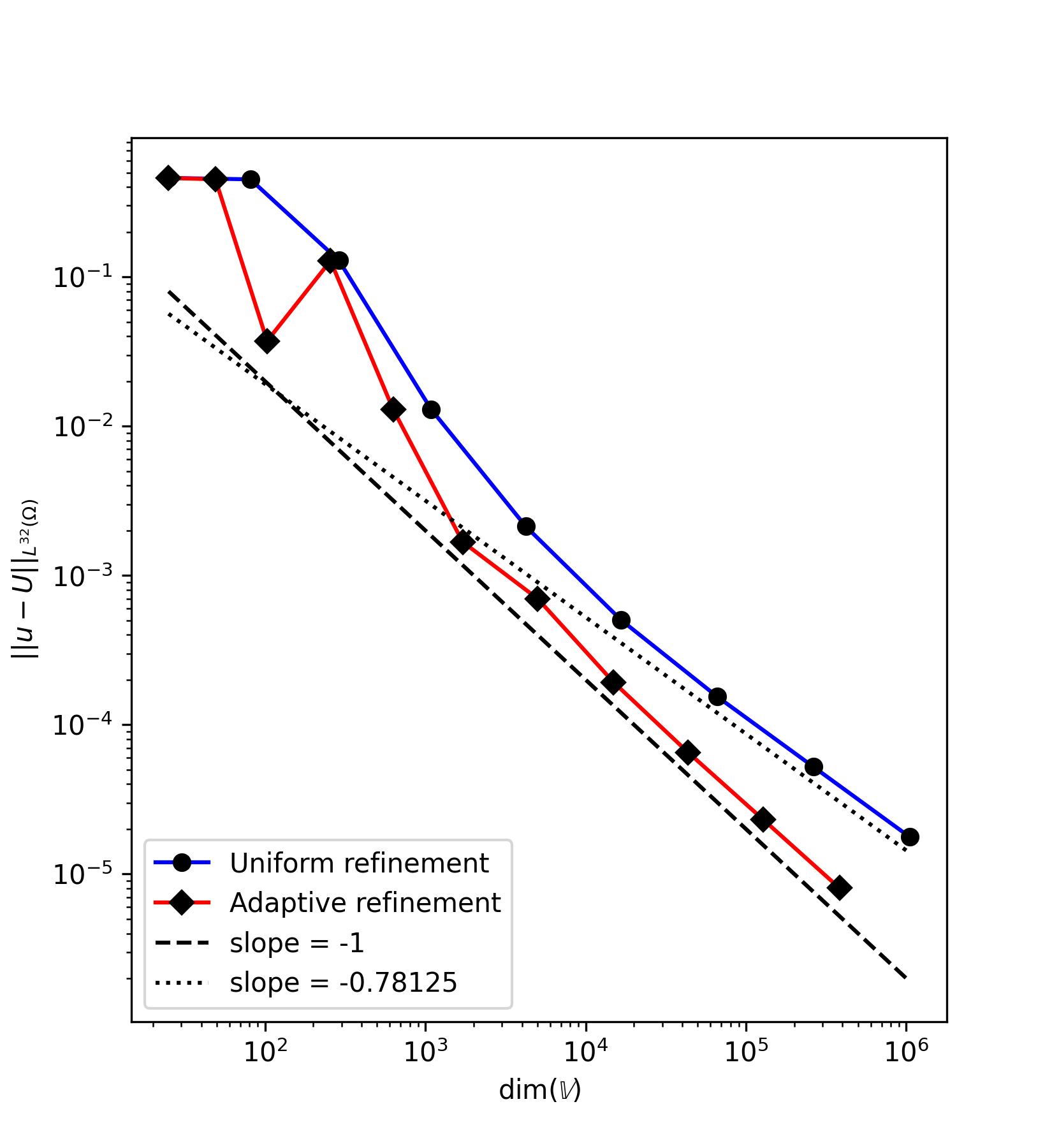}
    \caption{Example 1. Comparison of adaptive and uniform refinement.}
    \label{fig:l32_rates}
  \end{subfigure}
   \caption{Comparison of
   $\leb{32}(\W)$ error computed against exact solution for uniform
   and adaptive meshes for example 1.  Uniform mesh refinement
   delivers suboptimal convergence of order
   $h^{\tfrac{25}{16}-\varepsilon}$ as predicted by the a priori
   results of \cite{christof1754finite}. Optimal rates are recovered
   using adaptive mesh refinement.}
   \label{fig:l32}
\end{figure}

\begin{figure}
  \centering
  \includegraphics[width=0.32\linewidth,trim={3cm 0 3cm 0},clip]
  {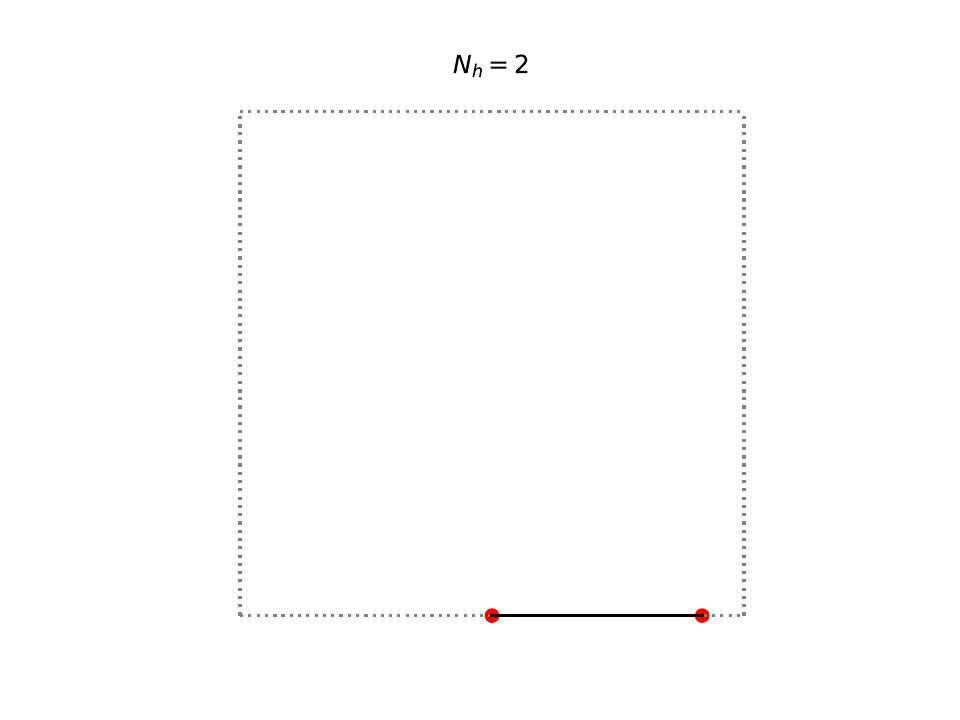}
  \includegraphics[width=0.32\linewidth,trim={3cm 0 3cm 0},clip]
  {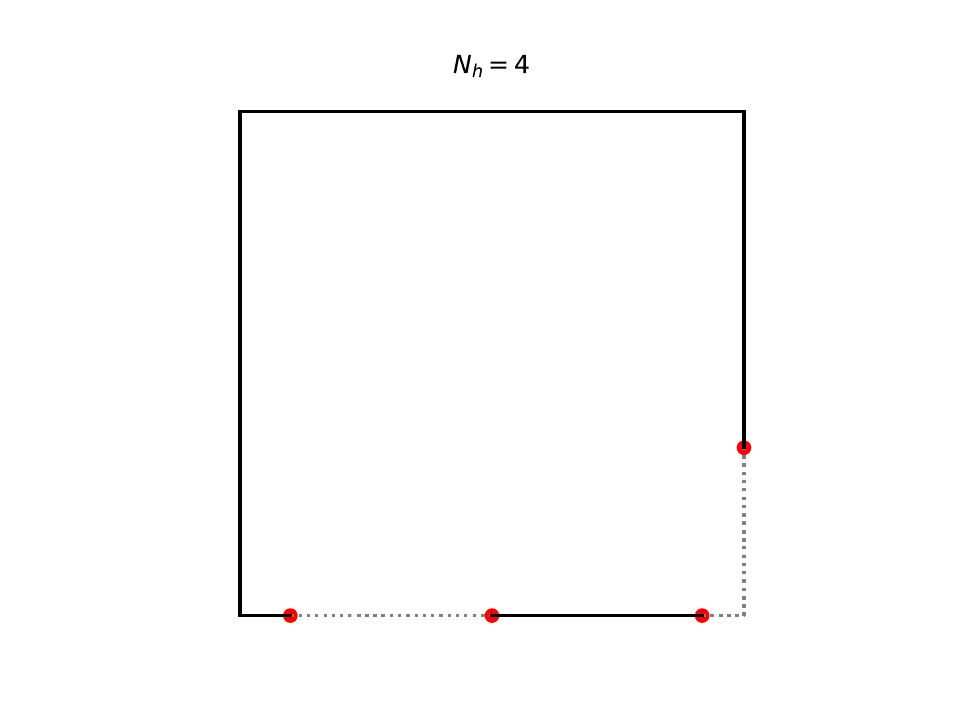}
  \includegraphics[width=0.32\linewidth,trim={3cm 0 3cm 0},clip]
  {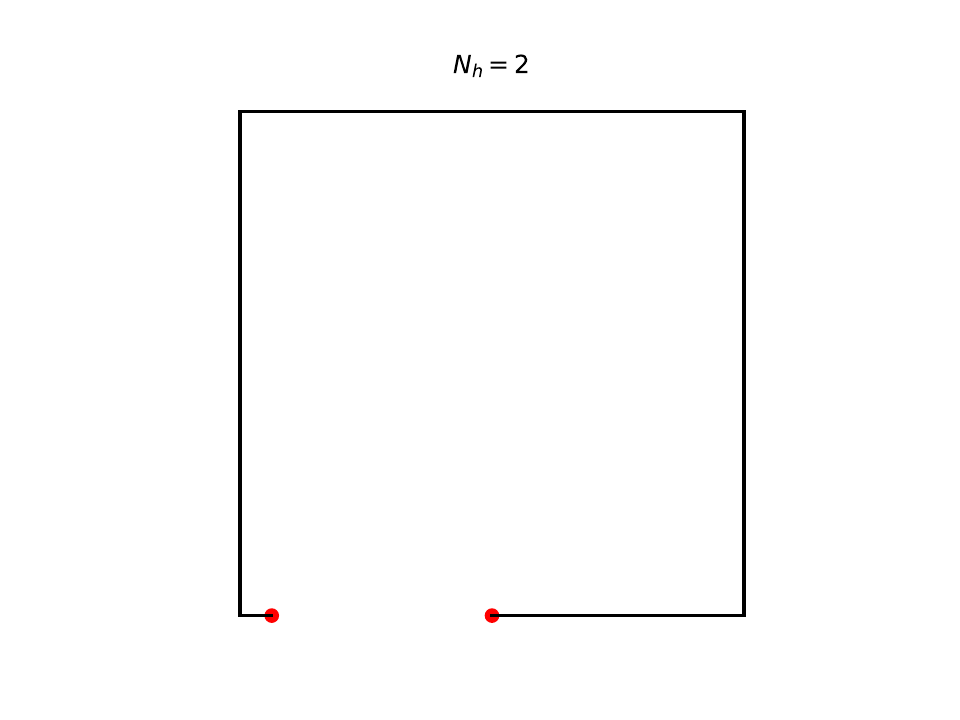}
  \caption{{Example 1 \S\ref{sec:l4:ex_1}, visualisation of the discrete contact set $\cA_U$. Meshes $\mathcal{T}^{6}$, $\mathcal{T}^{11}$, $\mathcal{T}^{15}$. Solid line is contact (i.e. $U=0$), dashed line indicates non-contact. Critical points are shown with red dots. A boundary degree of freedom is in $\cA_U$ if it is zero. For this example the discrete contact set varies on coarse meshes, and once sufficient resolution is attained we have two critical points where the boundary condition changes (note that this matches the known exact solution).
  \label{fig:plot_contact_set}}}
\end{figure}

\subsection{Example 2: Re-entrant Corner}\label{sec:l4:ex_3}

To test the estimate in the presence of a geometric singularity, we introduce a re-entrant corner to the domain.
In this example, data $f$ is chosen to ensure that both boundary constraints are active.
The problem data is selected to try and force the solution to be close to

\begin{equation}
w := \sin(2 \pi (r - b)^2) - 0.5,
\end{equation}
where $r = (x^2 + y^2)^{\frac 12}$ and $b$ can be varied to force different behaviours of the solution.
For this example we make the choice $b = 0.91$ and set $f = \Delta w + w$.

A numerical solution to this problem is shown in Figure \ref{sol_3}.
Under uniform mesh refinement, the error estimate converges to zero at a suboptimal rate.
Optimality is restored using an adaptive routine utilising D\"orfler marking with refinement fraction of 0.8 and no coarsening.
A sample of the meshes produced is given in Figures \ref{ex_3_mesh1}-\ref{ex_3_mesh4}.
The behaviour of the error estimate under uniform and adaptive refinement is shown in Figure \ref{suboptimal_convergence_2}.
The slopes of the error-reduction curves reveal suboptimal convergence in the uniform case which is somewhat recovered by adaptive mesh refinement, but to different degrees.
This time we see most refinement around gradients and features in the solution.
Interestingly, there is little refinement around the re-entrant corner where the solution is expected to lose regularity.
Instead, the error estimate prioritises resolution of the solution near where the boundary constraints are active.
This simulation was performed with the D\"orfler marking criterion with refine fraction 0.8 and no coarsening.
The algorithm was initialised on a coarse uniform mesh with $h = 1 \slash 16$.

\begin{figure}[ht!]
    \centering 
    \begin{subfigure}{0.23\textwidth}
        \includegraphics[width=\linewidth]{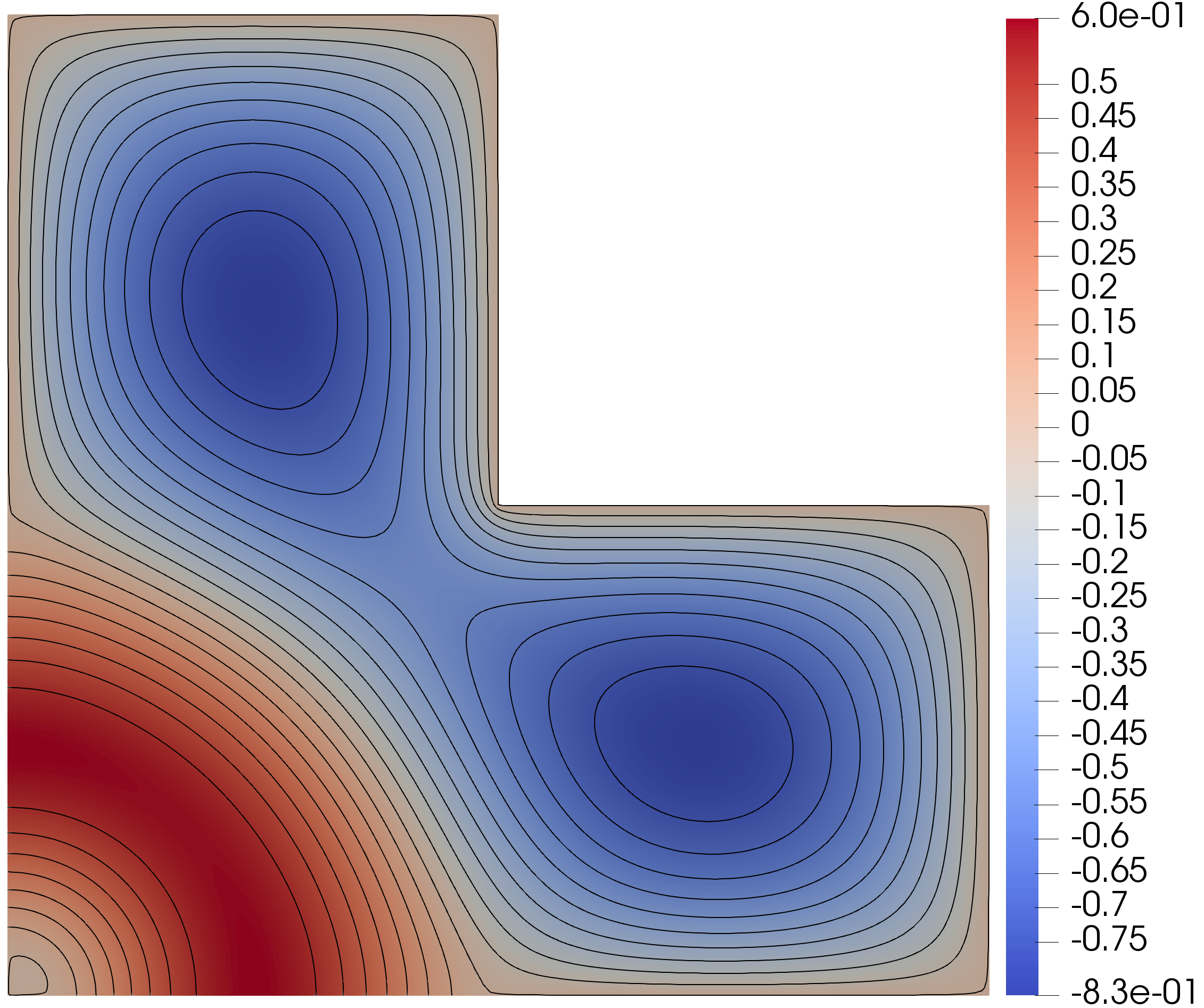}
        \caption{Contours of $U$.
            \label{sol_3}
        }
      \end{subfigure}
    \begin{subfigure}{0.19\textwidth}
        \includegraphics[width=\linewidth]{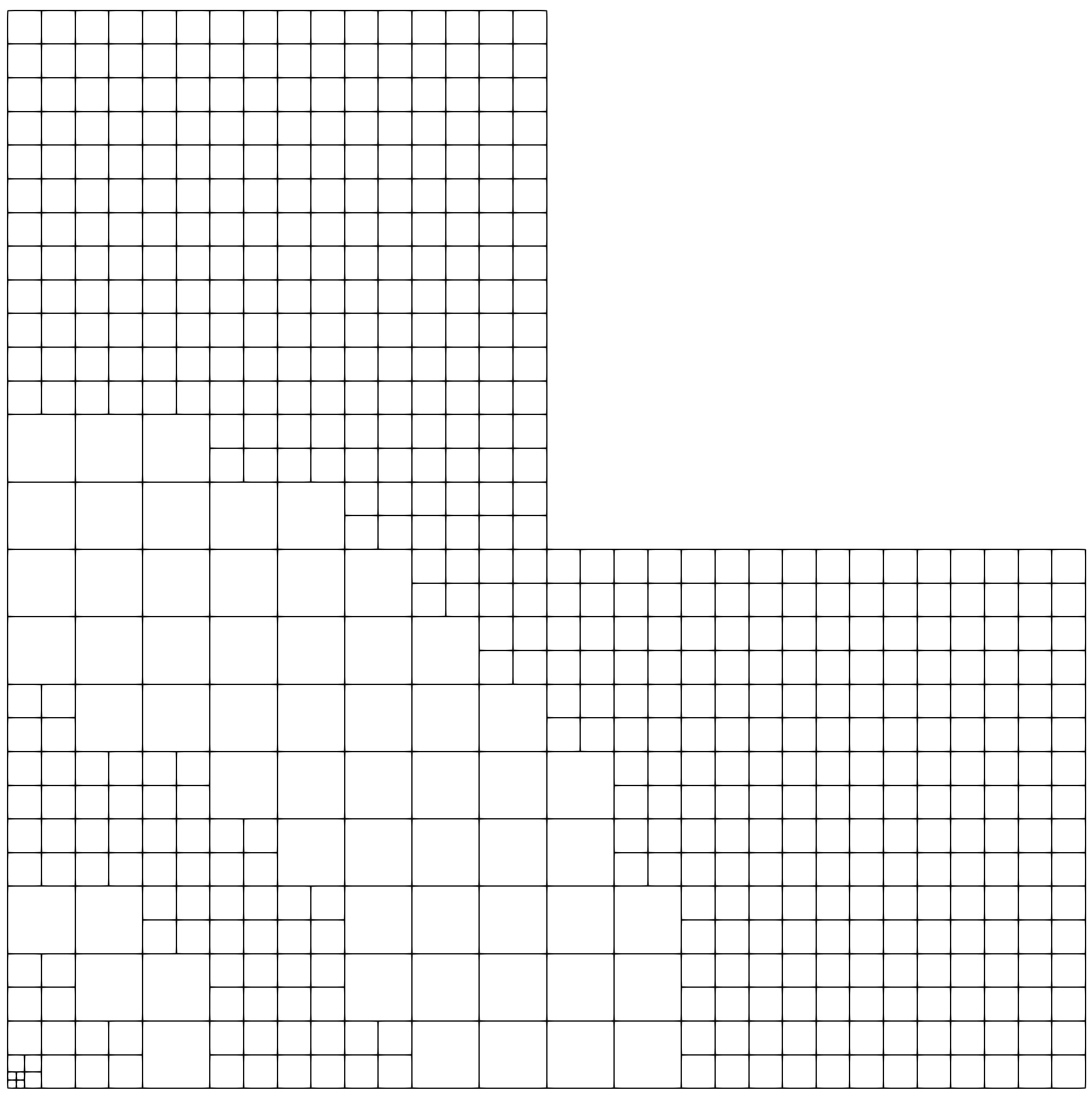}
        \caption{$\mathcal{T}^3$
            \label{ex_3_mesh1}}
    \end{subfigure}\hfil 
    \begin{subfigure}{0.19\textwidth}
        \includegraphics[width=\linewidth]{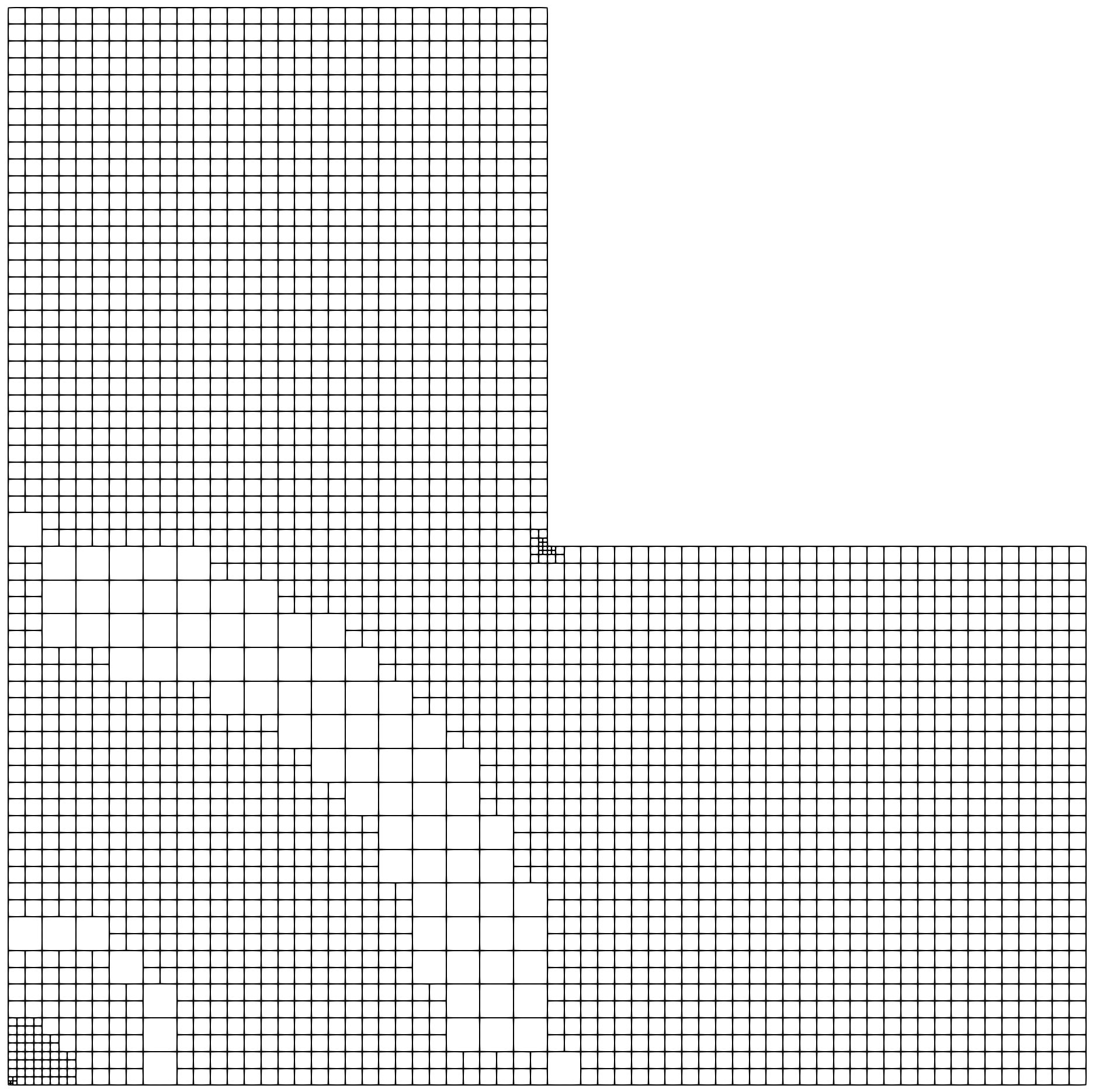}
        \caption{$\mathcal{T}^8$}
    \end{subfigure}\hfil 
    \begin{subfigure}{0.19\textwidth}
        \includegraphics[width=\linewidth]{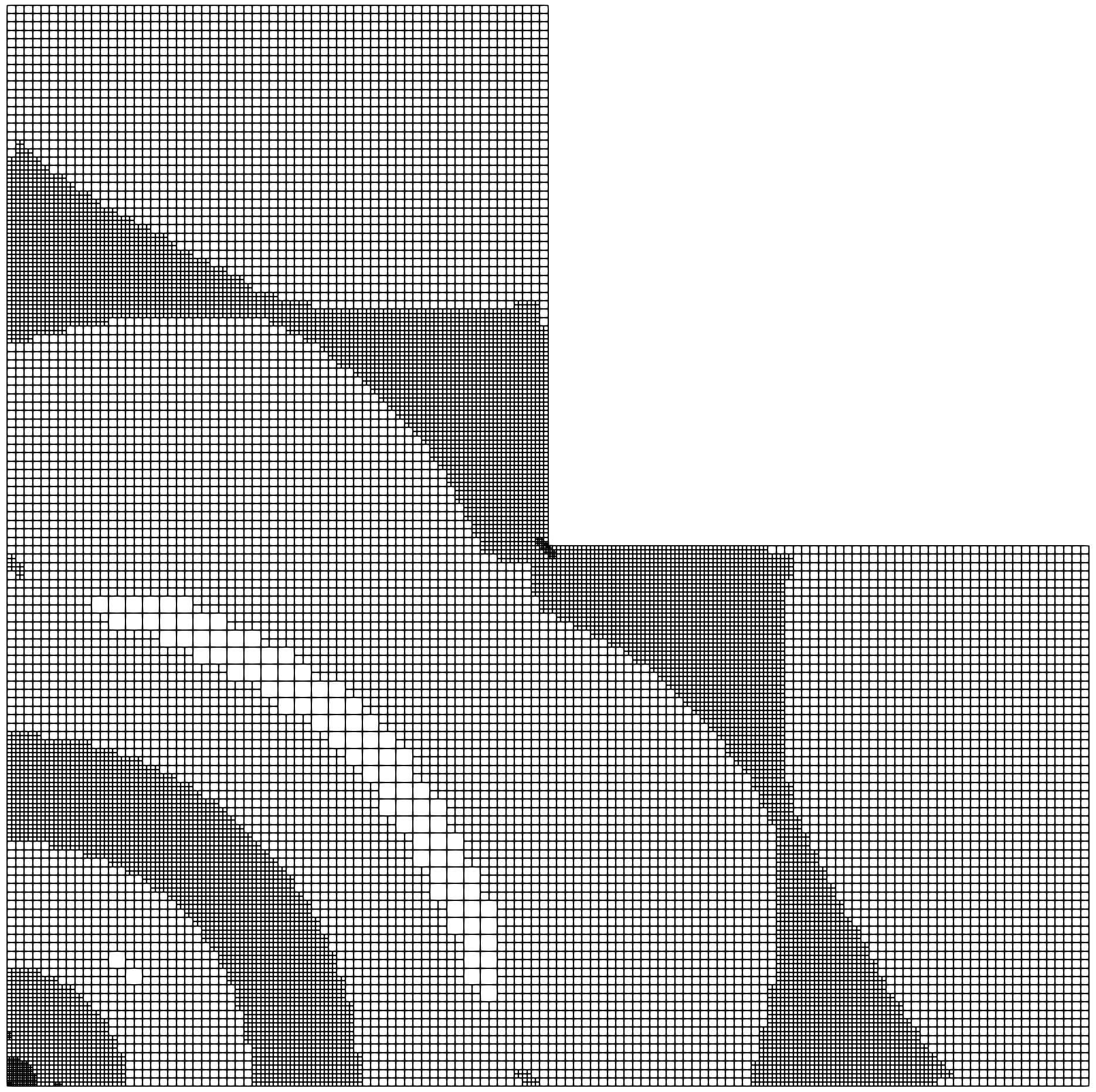}
        \caption{$\mathcal{T}^{12}$}
    \end{subfigure}\hfil 
    \begin{subfigure}{0.19\textwidth}
        \includegraphics[width=\linewidth]{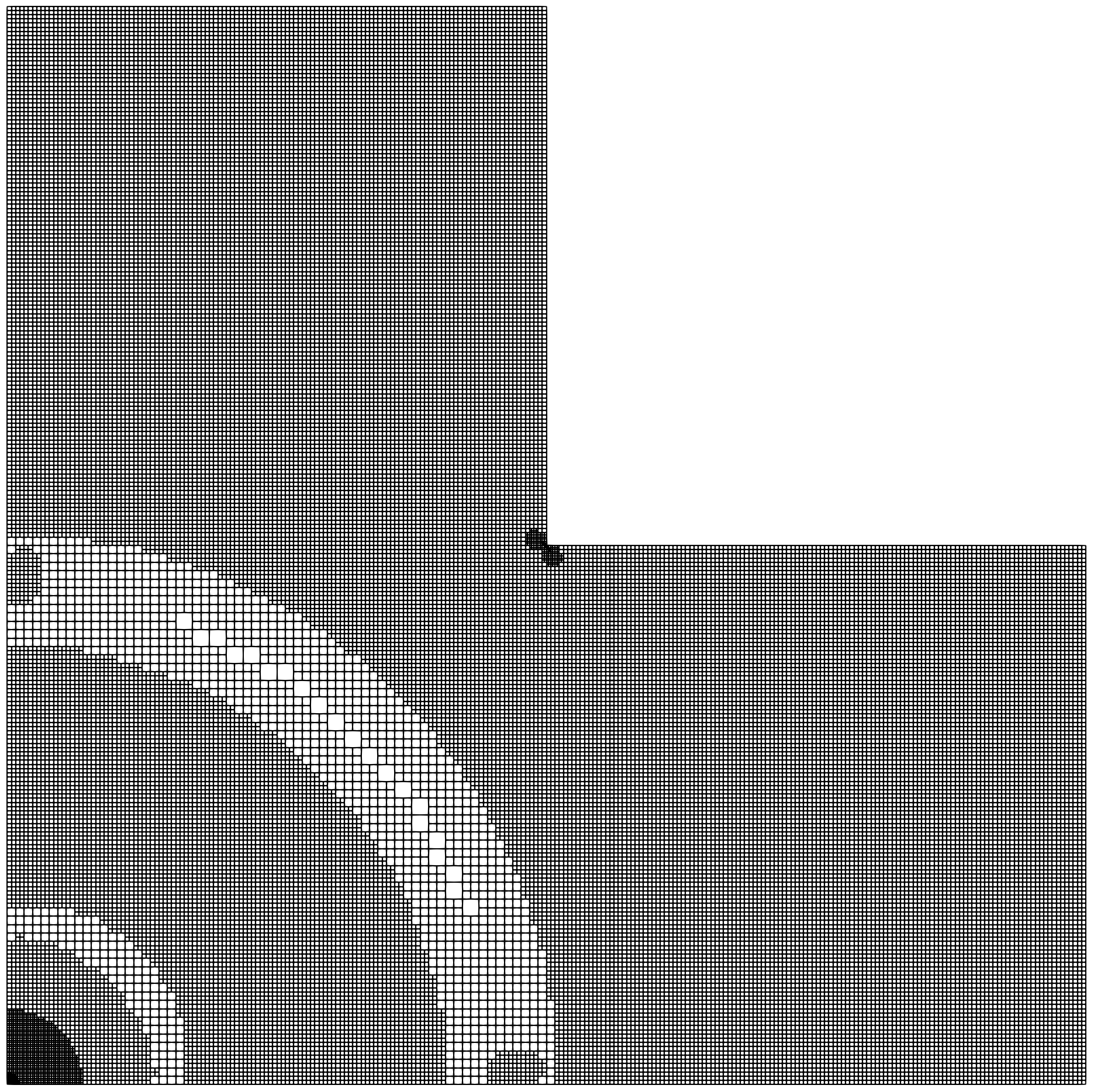}
        \caption{$\mathcal{T}^{15}$\label{ex_3_mesh4}}
    \end{subfigure}\hfil
    \caption{Example 2 \S\ref{sec:l4:ex_3}, contour plot and various iterations of the adaptive mesh $\mathcal{T}^i$ of an approximation to a function $u \in \sob{2}{(4 - \varepsilon) \slash 3}(\W) \backslash \sobh{2}(\W)$. The gradient appears to be discontinuous at the corner. In this case the mesh refines around the reentrant corner as well as along qualitative features of the solution and where the boundary conditions change type. }
\end{figure}

\begin{figure}[ht!]
  \centering
  \begin{subfigure}{0.3\textwidth}
      \includegraphics[width=\linewidth]{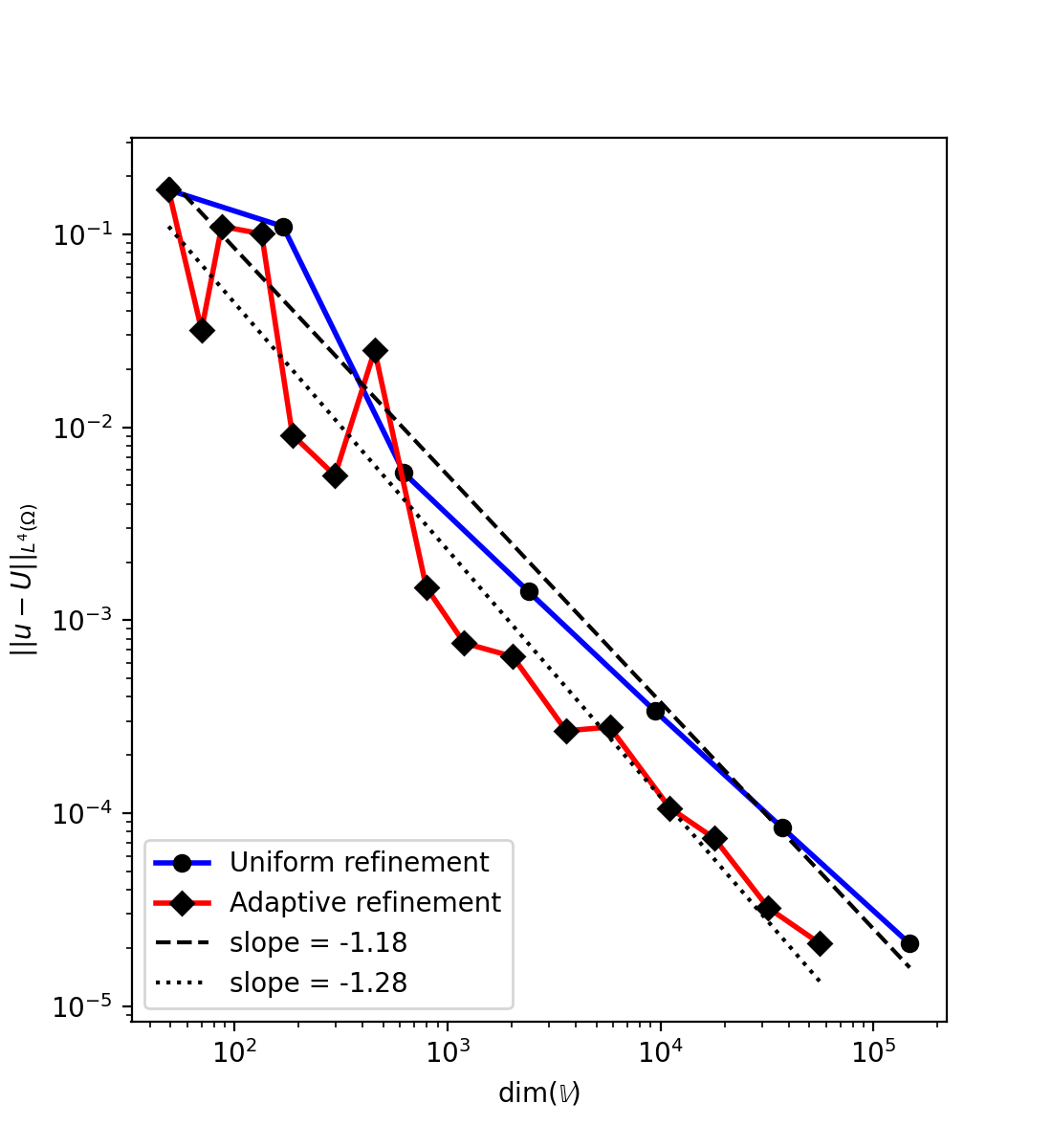}
      \caption{Example 1. Expected slope from theory is -1.}
     \label{eq:adaptive_uniform_1}
   \end{subfigure}\hfil
   \begin{subfigure}{0.3\textwidth}
    \includegraphics[width=\linewidth]{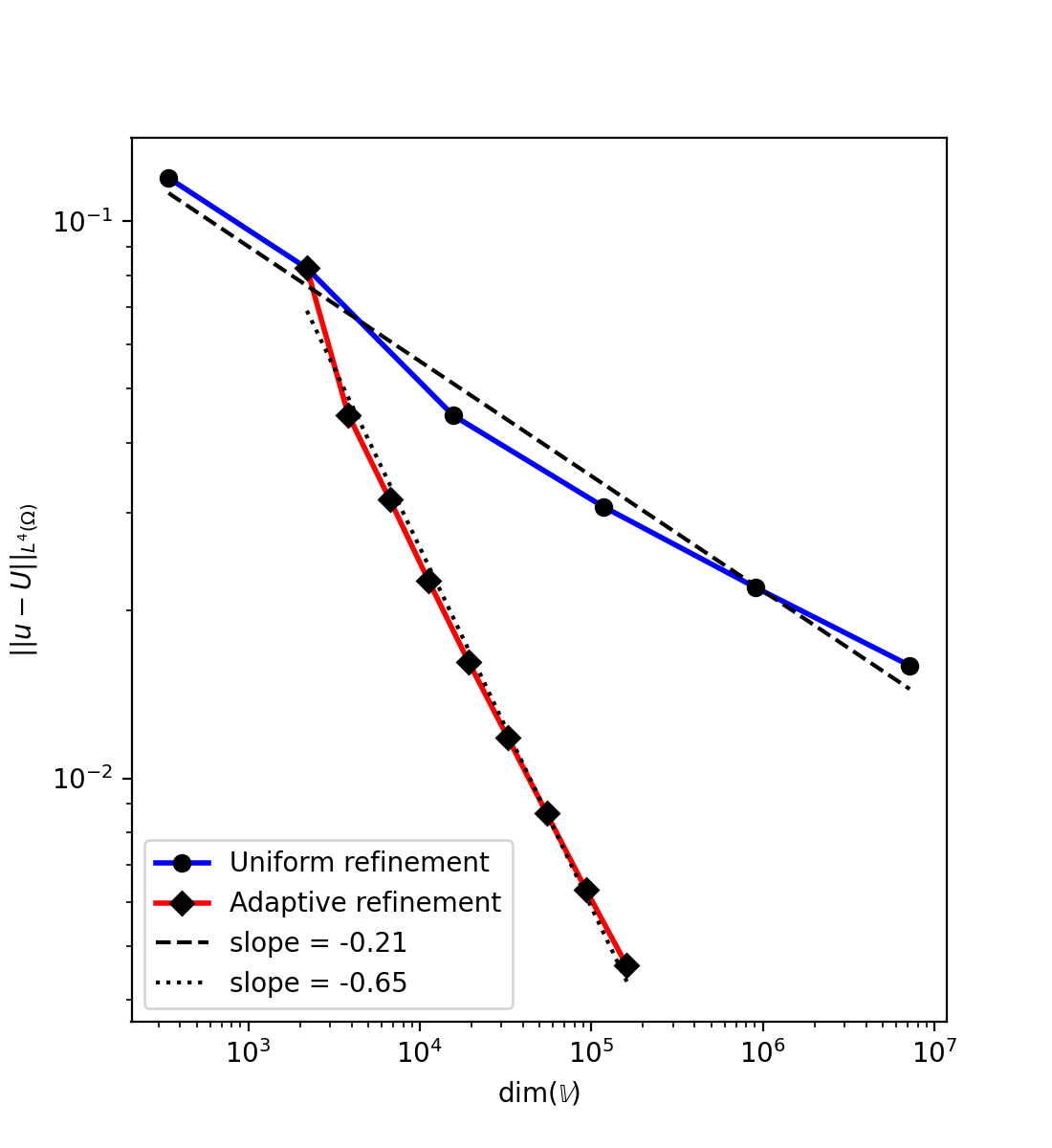}
    \caption{Example 3. Expected slope from theory is $- \tfrac 2 3$.}
    \label{eq:adaptive_uniform_2}
  \end{subfigure}
   \caption{Comparison of $\leb{4}(\W)$ error computed against exact solutions for uniform and adaptive meshes for those examples for which an exact solution is available.
   The orders of convergence of the error estimate in each
   case (that is, the slope of the error values) is approximated from the log-scaled data by performing standard least squares linear
   regression.}
   \label{fig:adaptive_beats_uniform}
\end{figure}

\begin{figure}[ht!]
  \centering
  \begin{subfigure}{0.3\textwidth}
    \includegraphics[width=\linewidth]{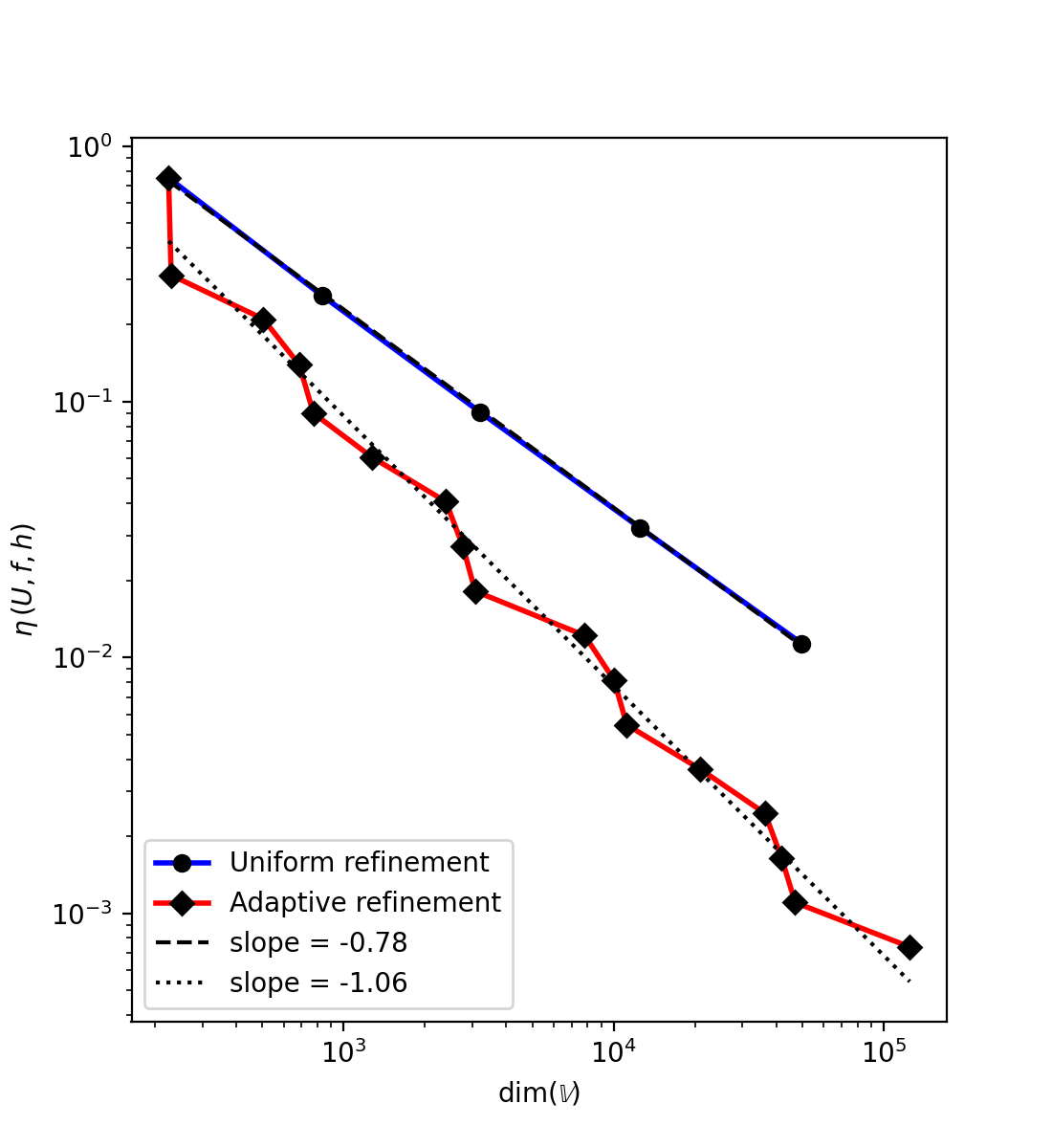}
    \caption{Example 2. Expected slope from theory is $-1$.}
   \end{subfigure}\hfil
 \begin{subfigure}{0.3\textwidth}
    \includegraphics[width=\linewidth]{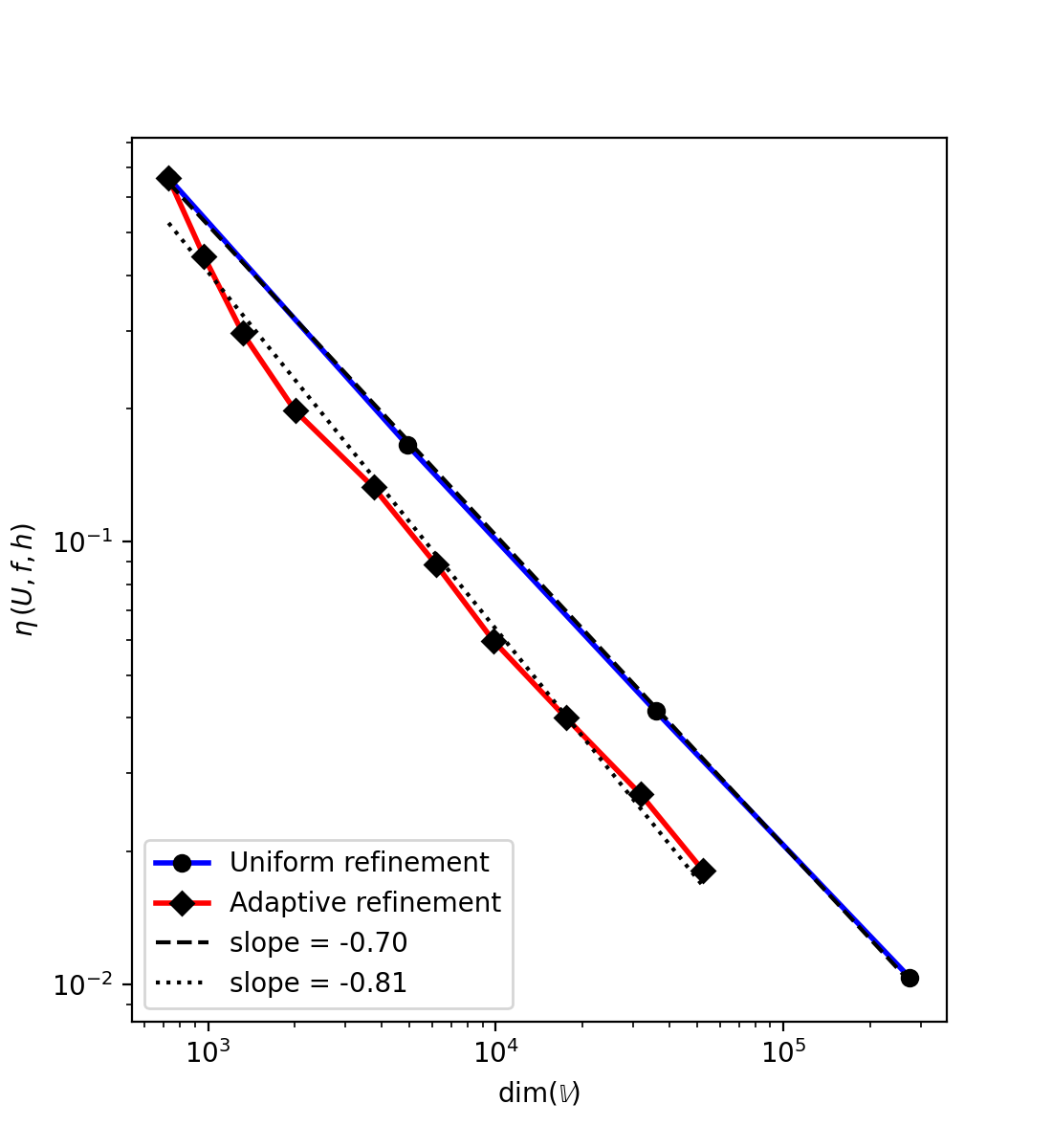}
    \caption{Example 3. Expected slope from theory is $- \tfrac 2 3$.}
    \label{eq:3d_estimate}
    \end{subfigure}
    \caption{ Double logarithmic plot of error estimate
        against number of degrees of freedom under
        uniform and adaptive mesh refinement for examples 2 and 3, that is, the examples that do not meet the necessary assumptions for the theory above to hold. The orders of
        convergence of the error estimate in each
        case is approximated from the log-scaled data by performing standard least squares linear
        regression.}
    \label{suboptimal_convergence_2}
\end{figure}

\subsection{Example 3, \(d = 3\).}\label{sec:l4:ex_4}

Working in three dimensions and in spherical polar coordinates, centred at $(0.5, 0, 0.5)$,  we now consider a test analogous to example 1.
Let $(r, \theta, \phi)$ denote spherical polar coordinates centred at $(0.5,0,0)$.
We note that the function

\begin{equation} \label{eq:3d_sol}
   u=  - 10 \psi(r) r^{3\slash 2} \sin\left( \frac{3}{2} \varphi\right)
    \sin\left(\frac{3}{4}
    \theta\right)
\end{equation}
satisfies appropriate constraints on the boundary and so solves \eqref{eq:diff_eq}, \eqref{eq:constraints} for appropriately defined $f$.
Contours of \eqref{eq:3d_sol} are shown in Figure \ref{fig:3d_slices}.
Again we use D\"orfler marking with refine fraction of $0.8$ and no coarsening.

Adaptive meshes from several stages of the adaptive algorithm are displayed in \ref{fig:3d_meshes} (note that these are slices of a three-dimensional mesh).
We observe that mesh resolution is greater close to the boundary where the Signorini constraints are active (in this case the plane defined by $y=0$) and provides good resolution of the contact set.
Refinement is also present around key features of the solution, as observed for example 1, Figures \ref{eq:sol_1} and \ref{ex_1_mesh1}-\ref{ex_1_mesh4}

\begin{figure}[ht!]
  \centering
    \includegraphics[width=0.25\linewidth]{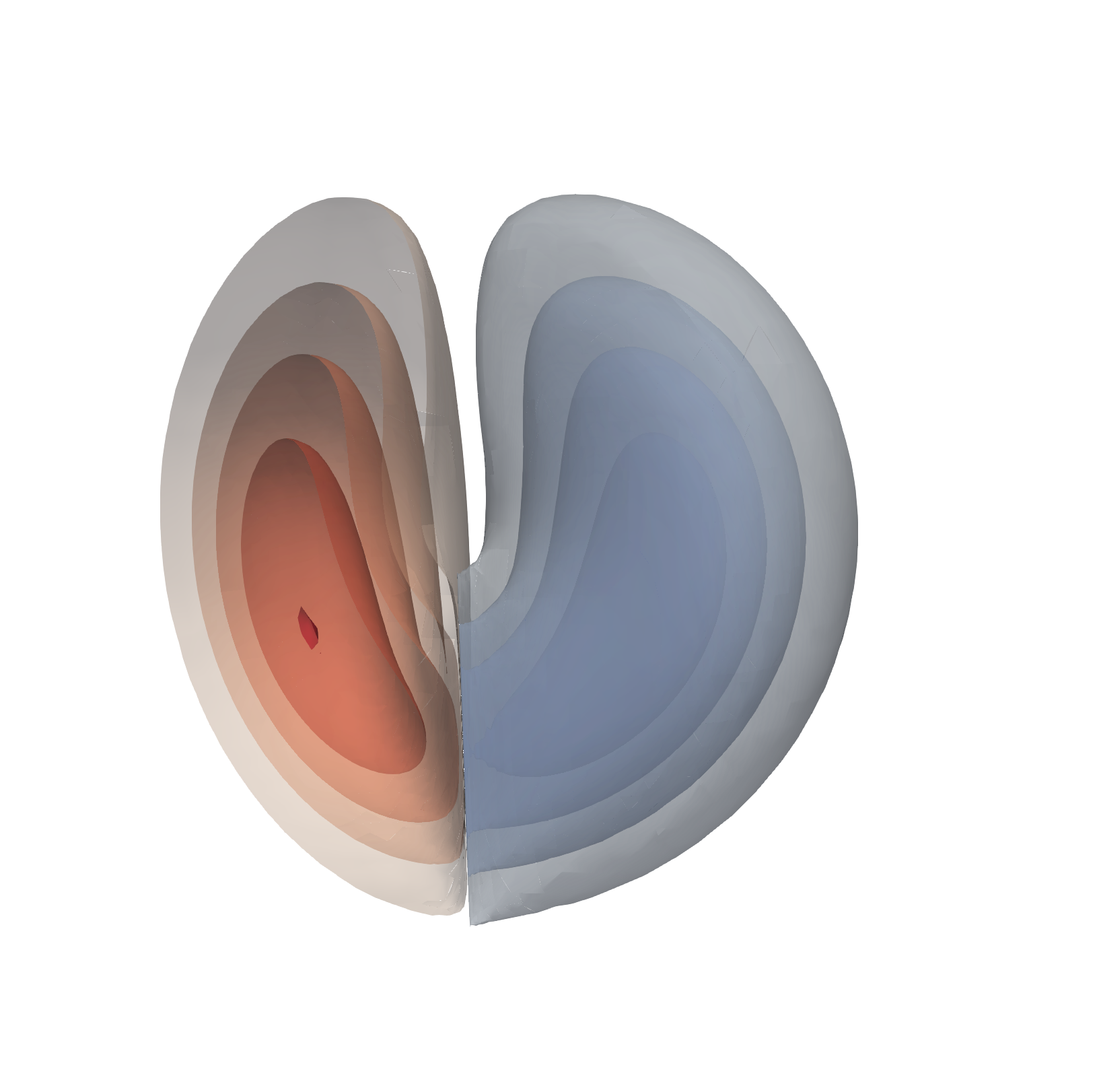}
    \includegraphics[width=0.25\linewidth]{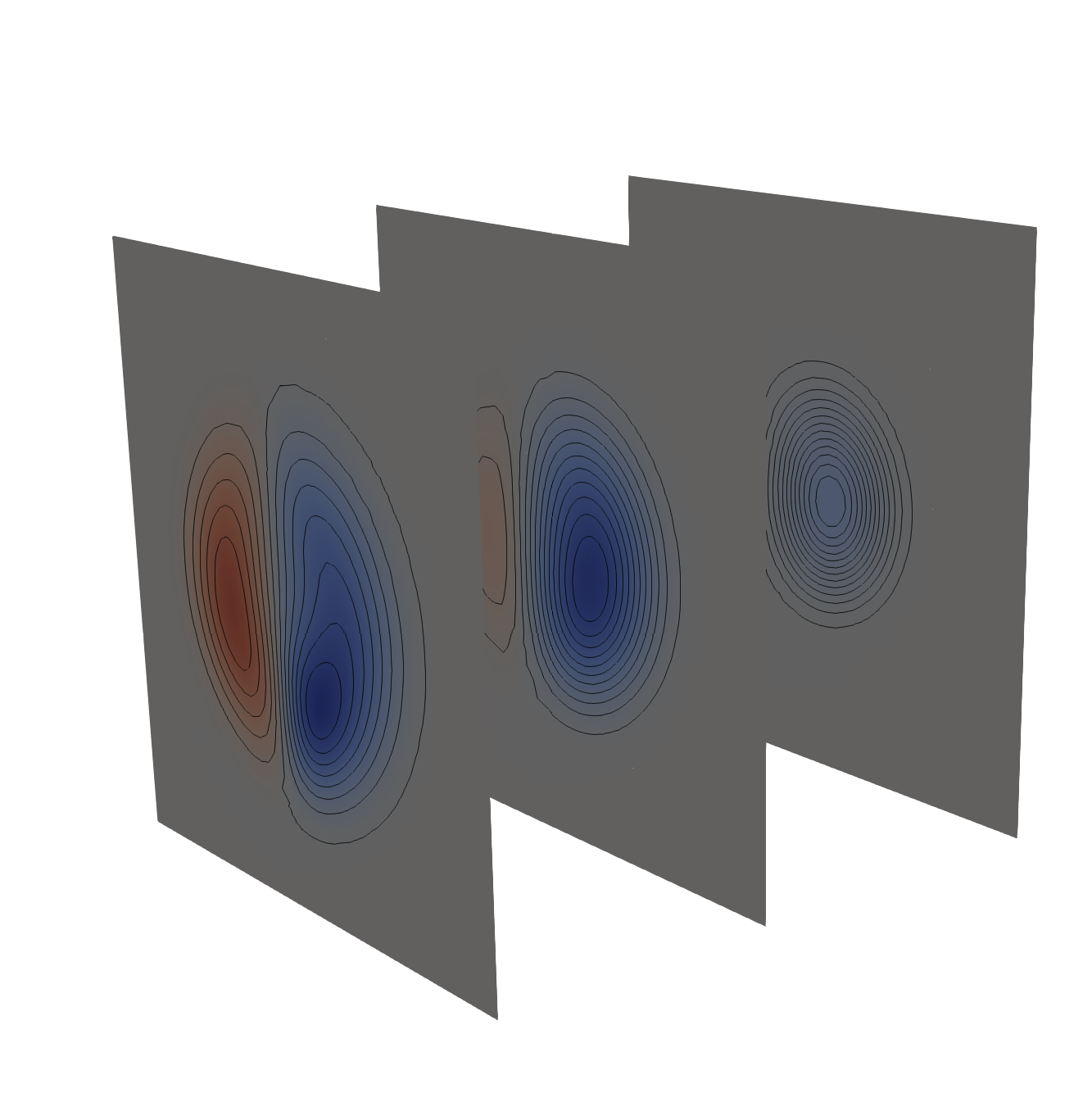} \\
    \includegraphics[width=0.5\linewidth]{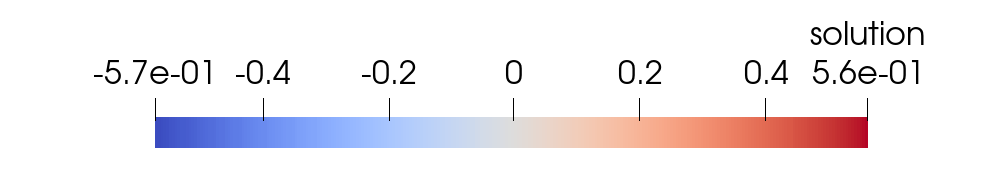}
    \caption{Left: contours of the numerical solution to example 3 \S\ref{sec:l4:ex_4}, Right: slices
    of the numerical solution parallel to planes $y
    = c$ for $c=0.1, 0.2$ and 0.3. Note that for visualisation purposes the right
    hand plot is not to scale; the planes have been translated along the $y$-axis.}
    \label{fig:3d_slices}
\end{figure}

\begin{figure}[ht!]
  \centering
    \includegraphics[width=0.15\linewidth]{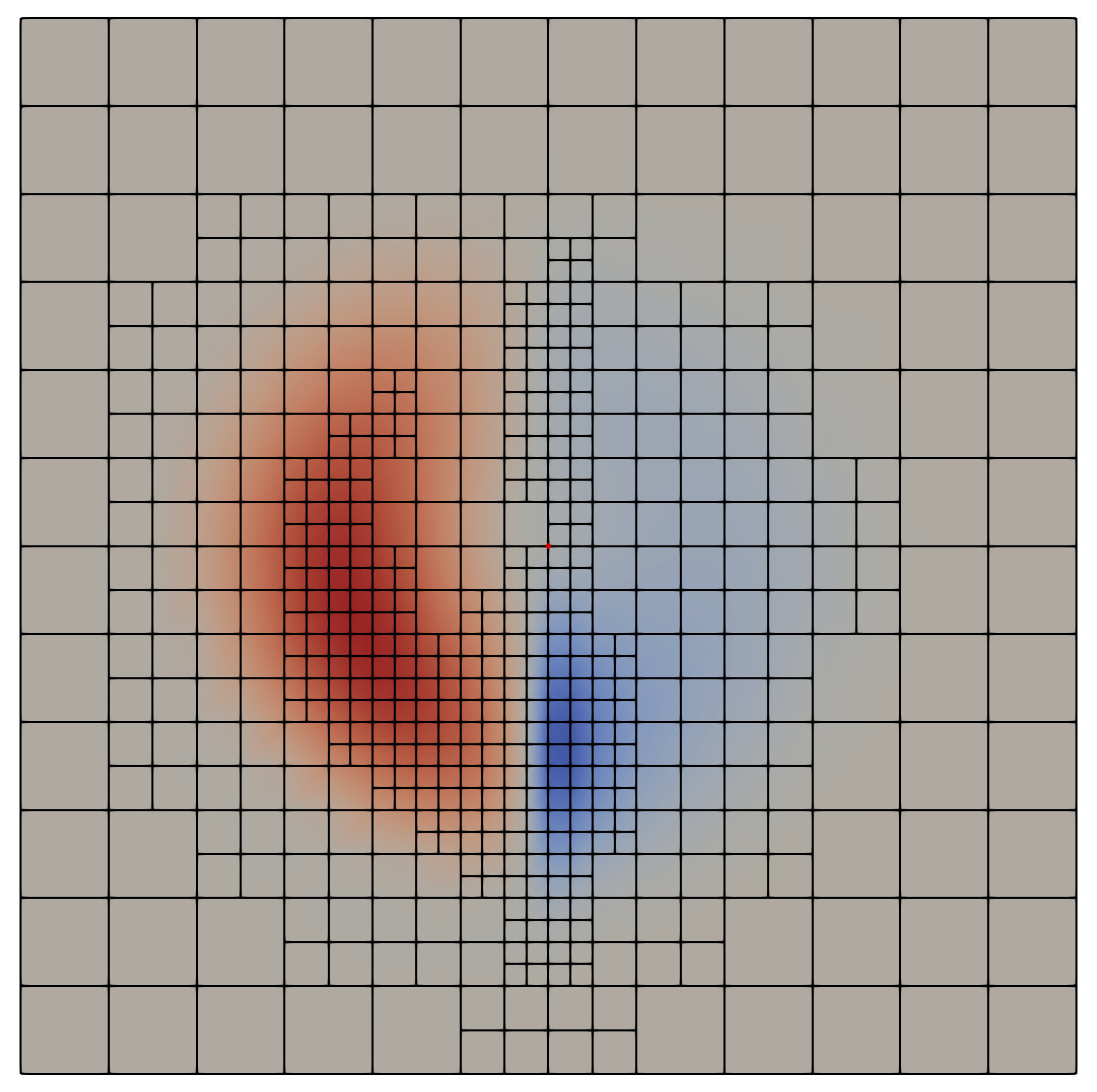}
    \includegraphics[width=0.15\linewidth]{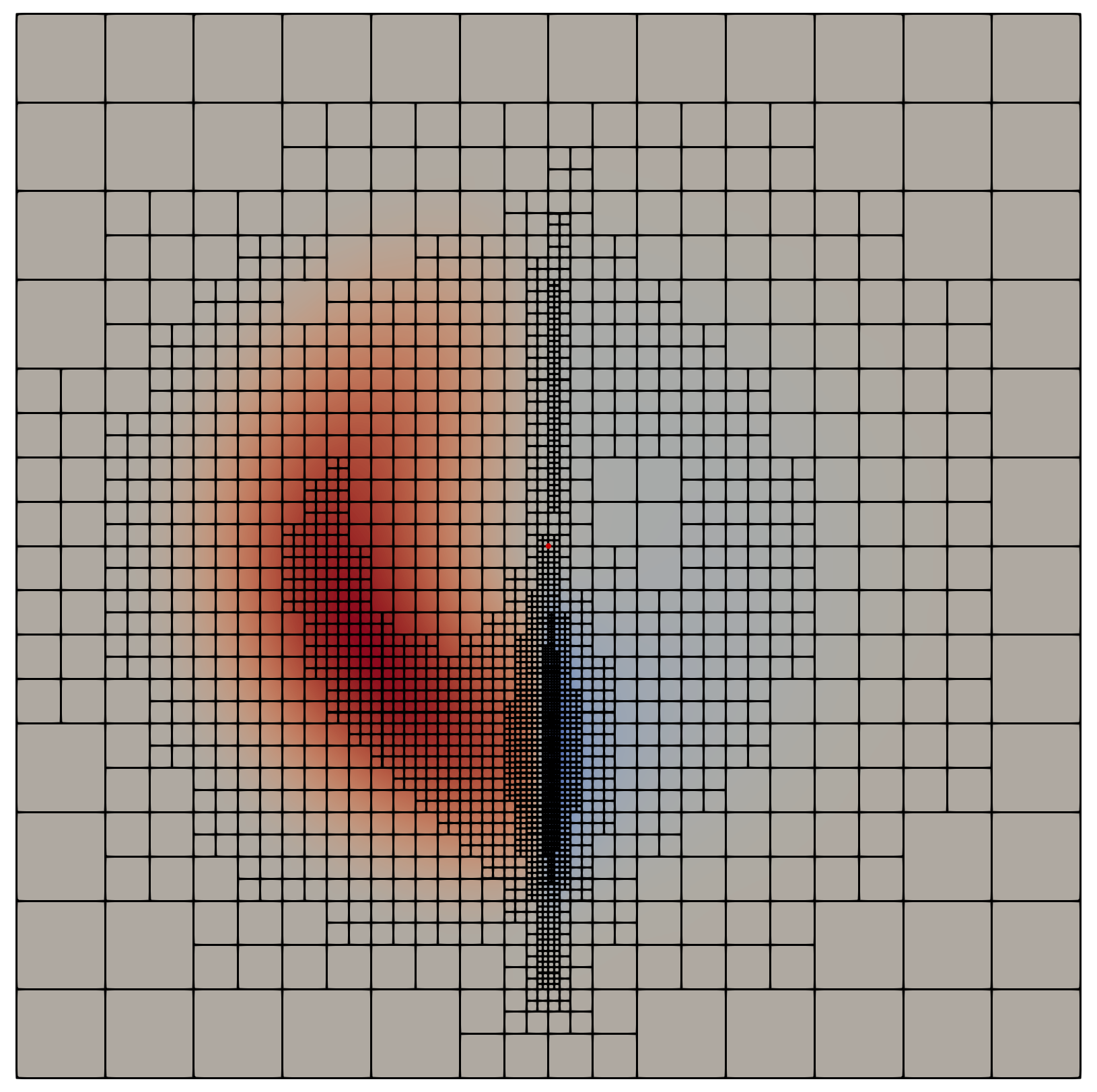}
    \includegraphics[width=0.15\linewidth]{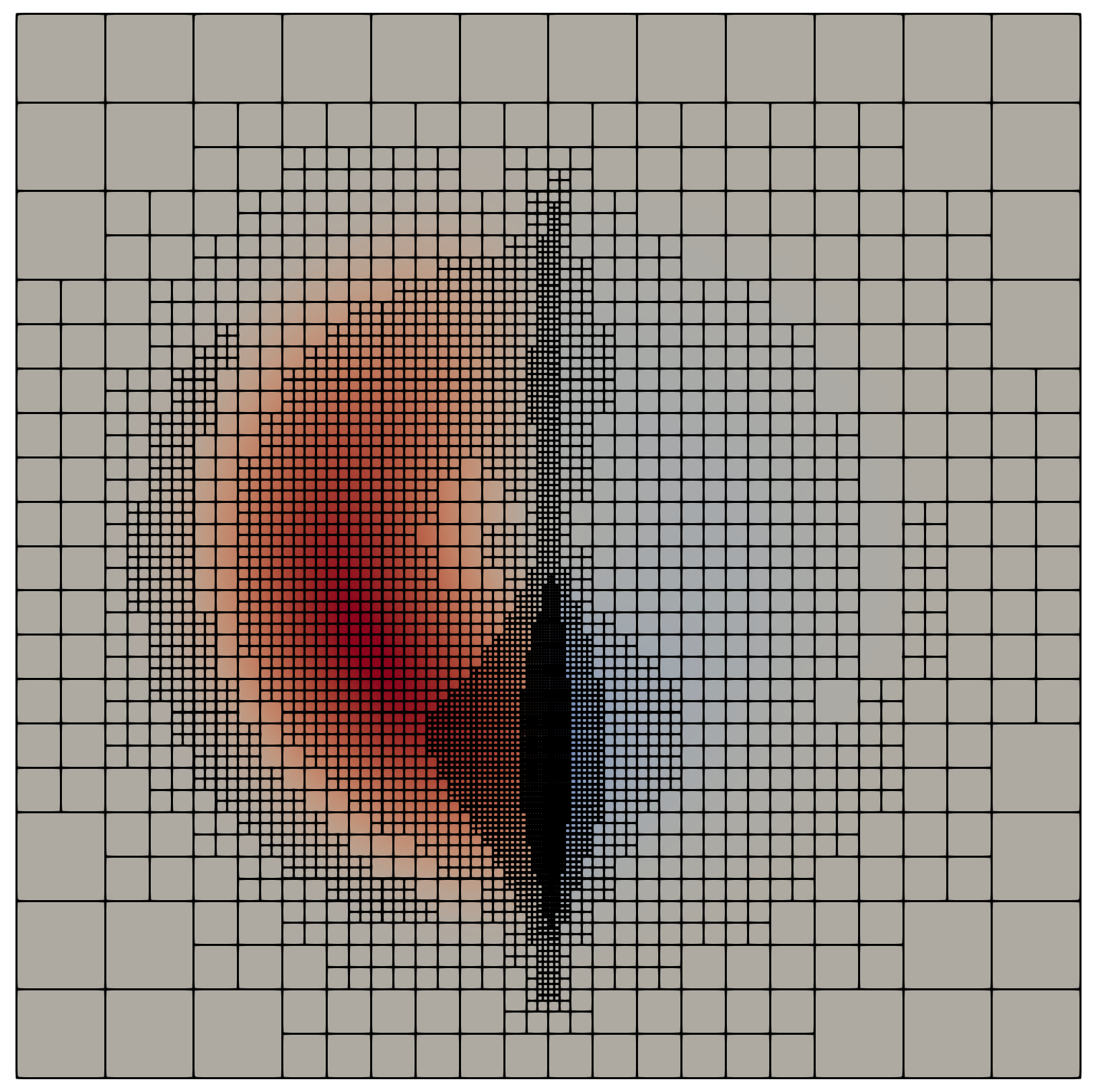} \\
    \includegraphics[width=0.15\linewidth]{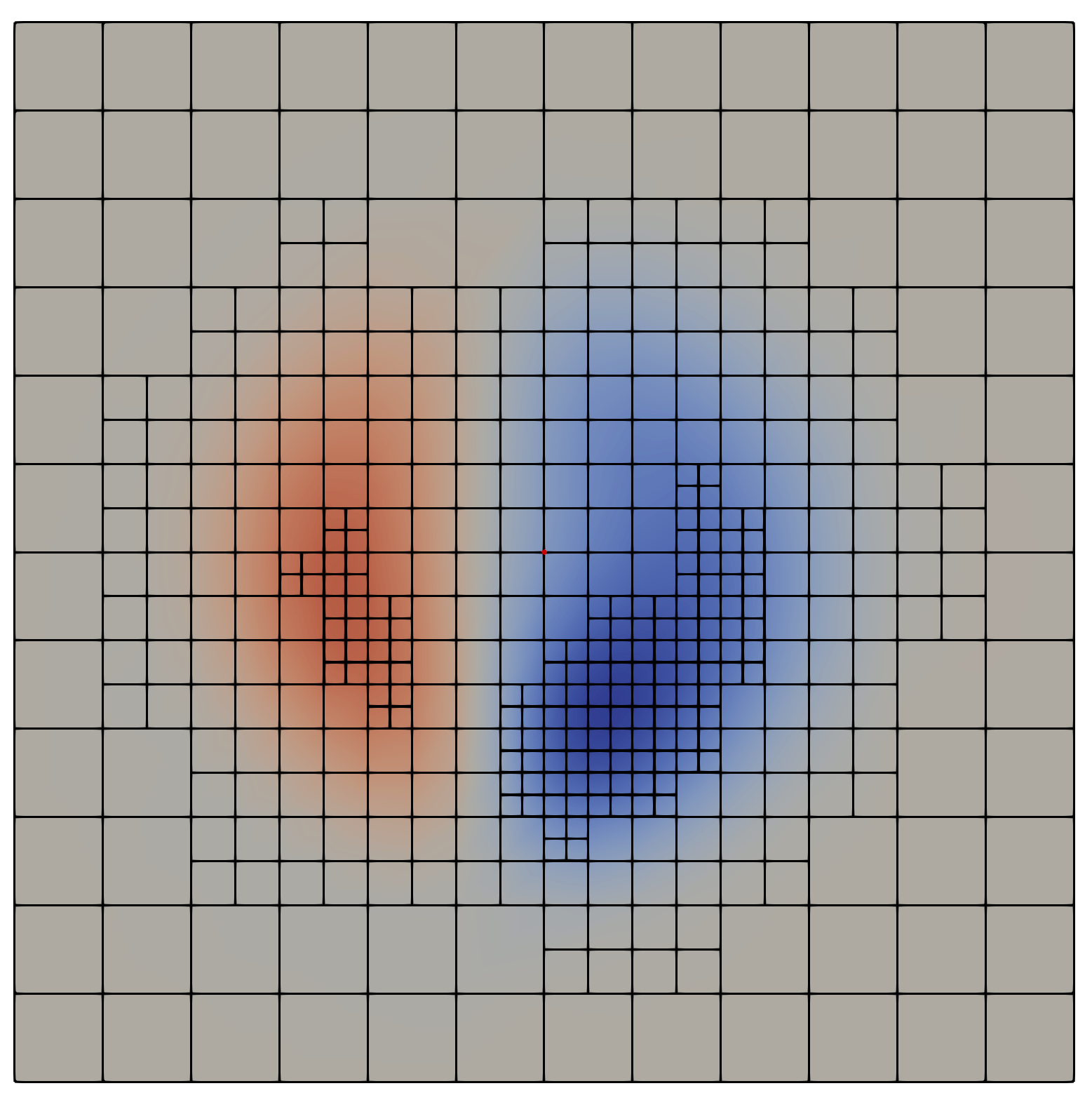}
    \includegraphics[width=0.15\linewidth]{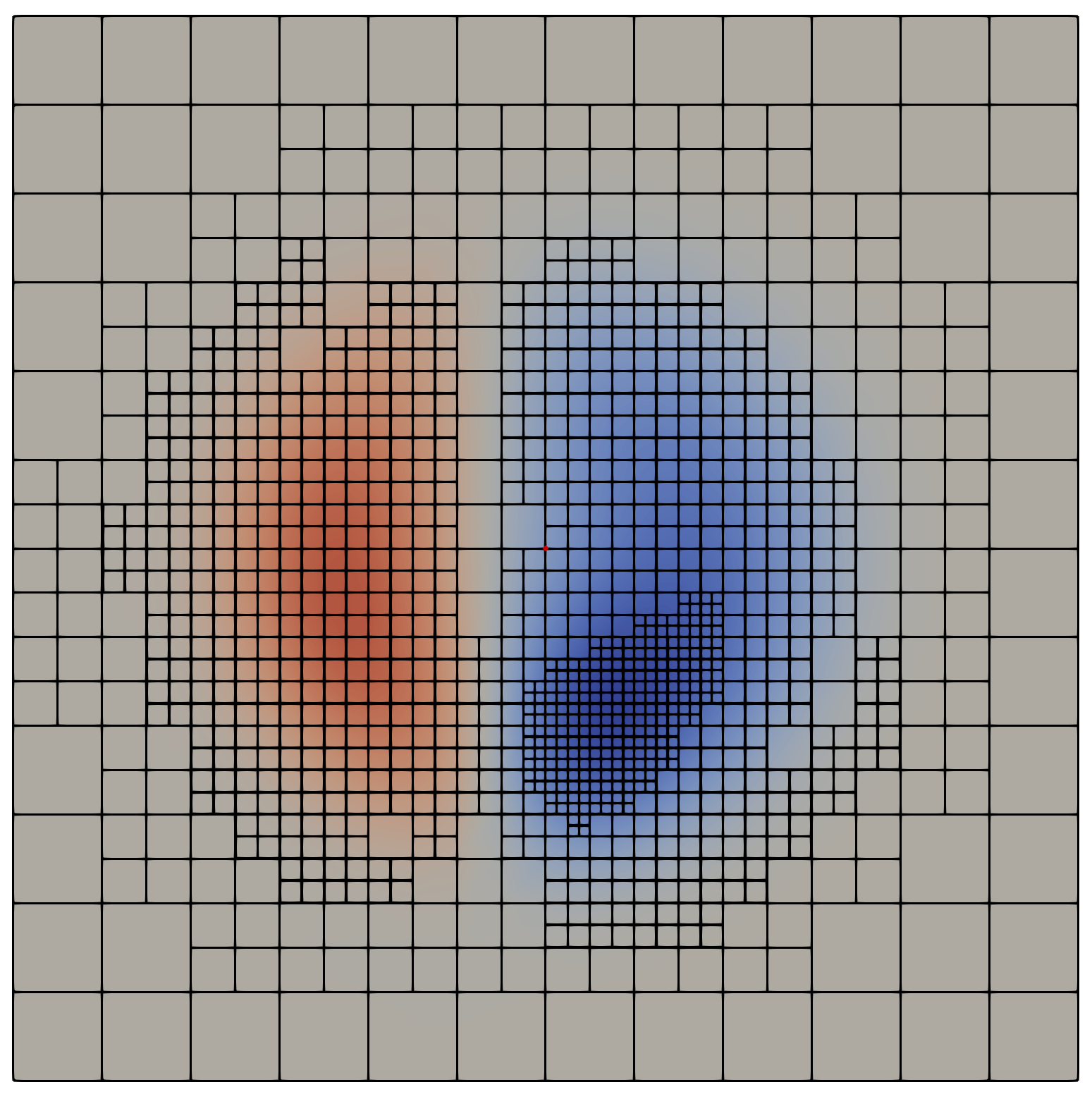}
    \includegraphics[width=0.15\linewidth]{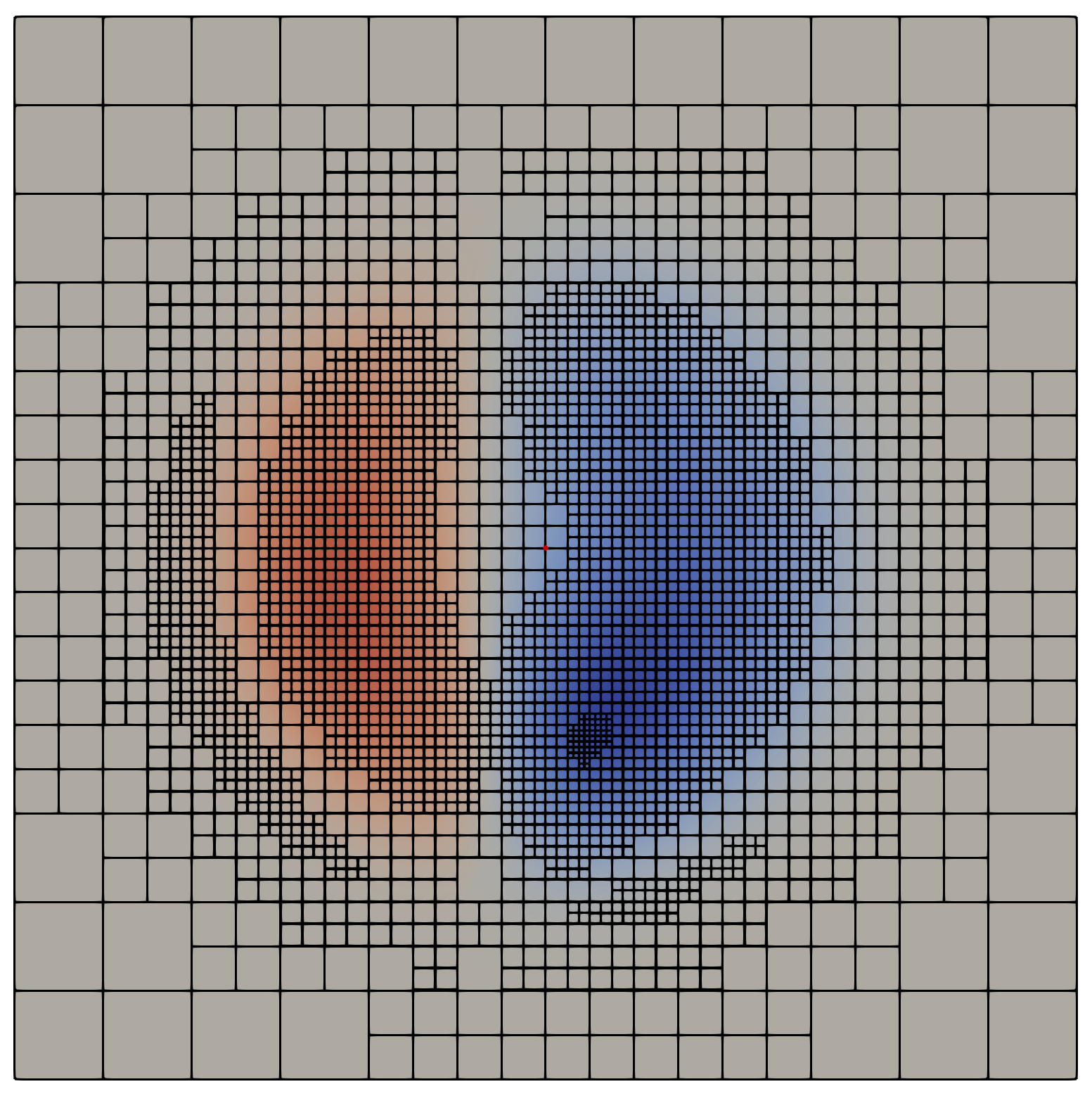} \\
    \includegraphics[width=0.15\linewidth]{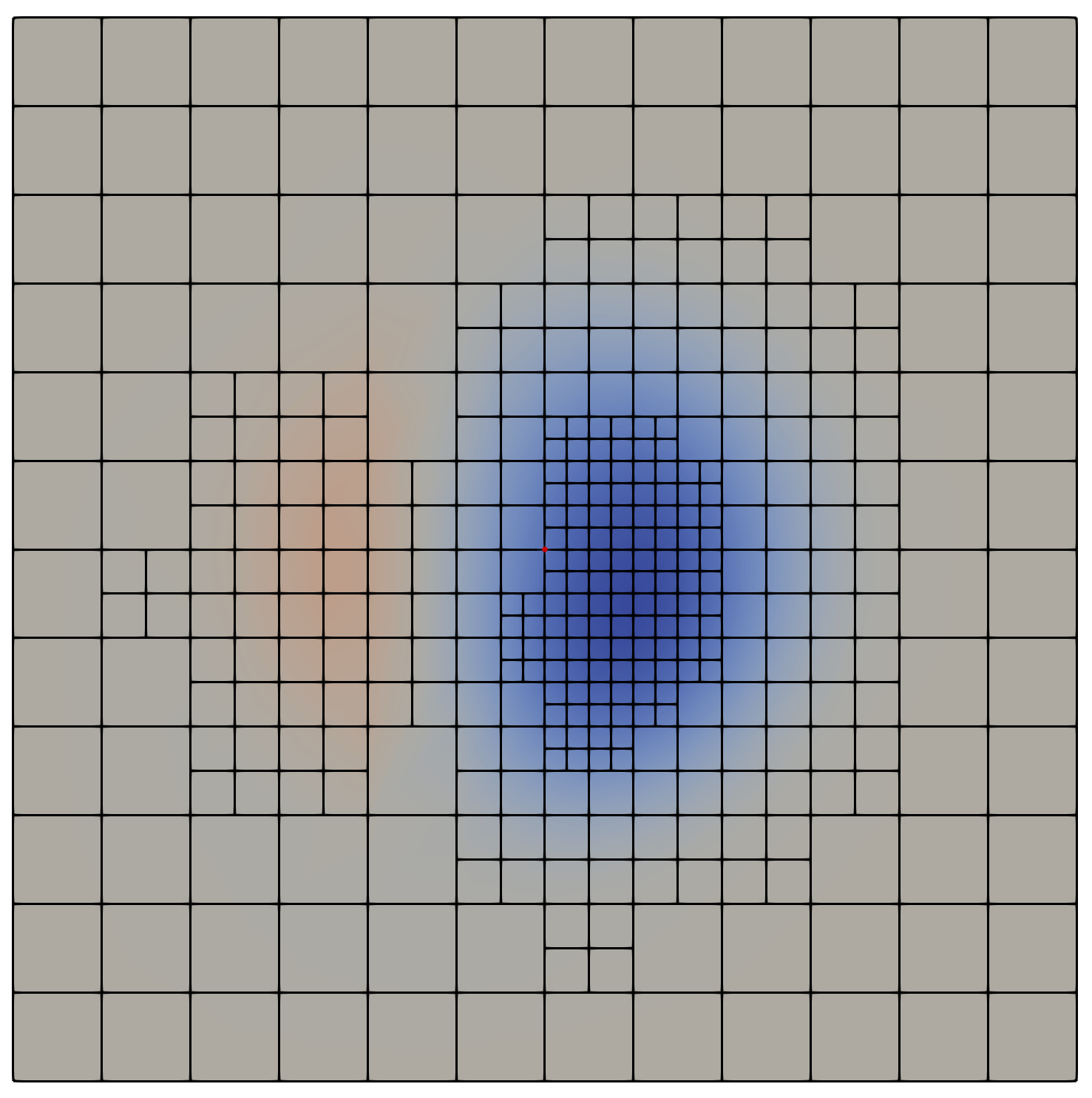}
    \includegraphics[width=0.15\linewidth]{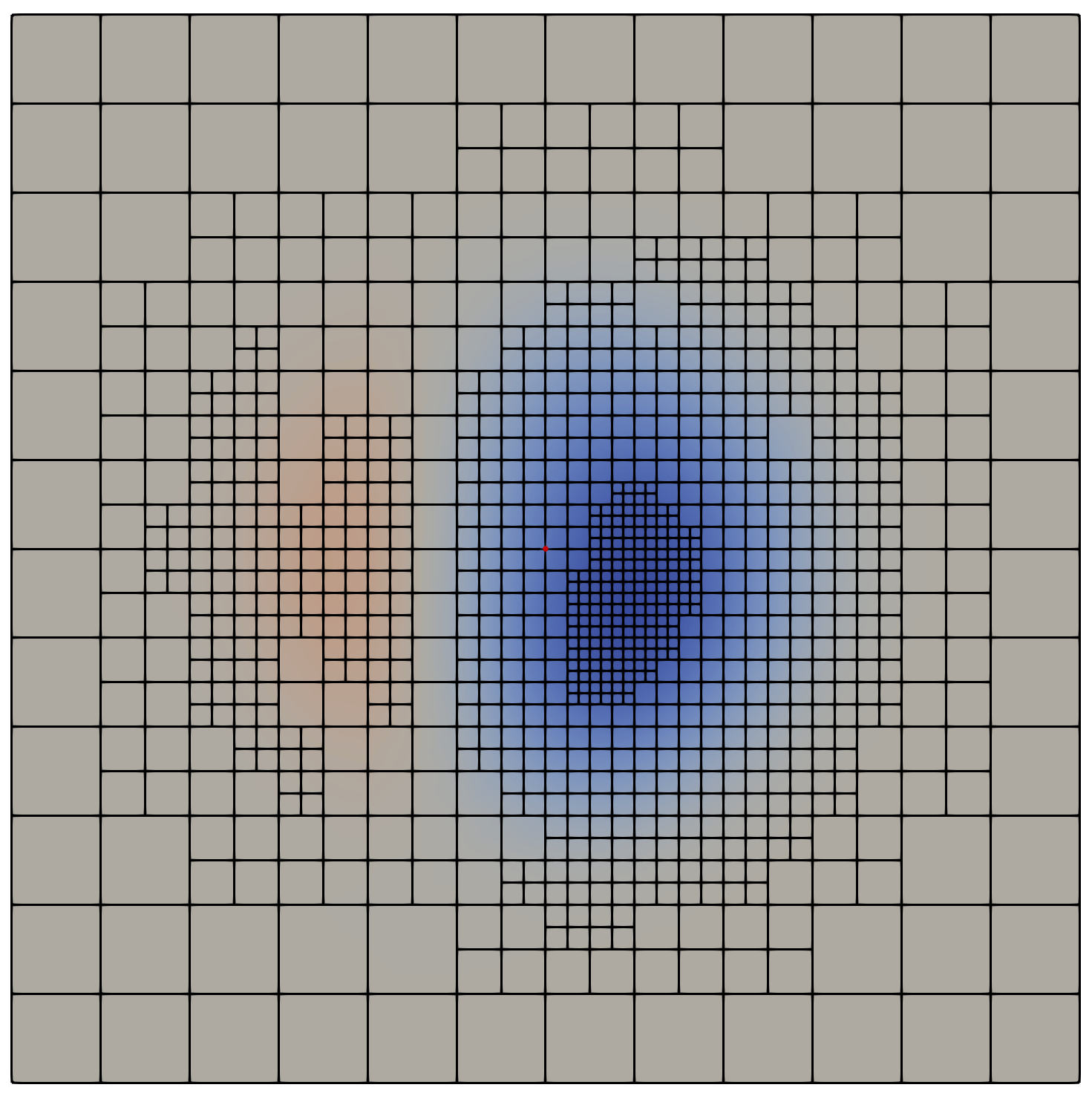}
    \includegraphics[width=0.15\linewidth]{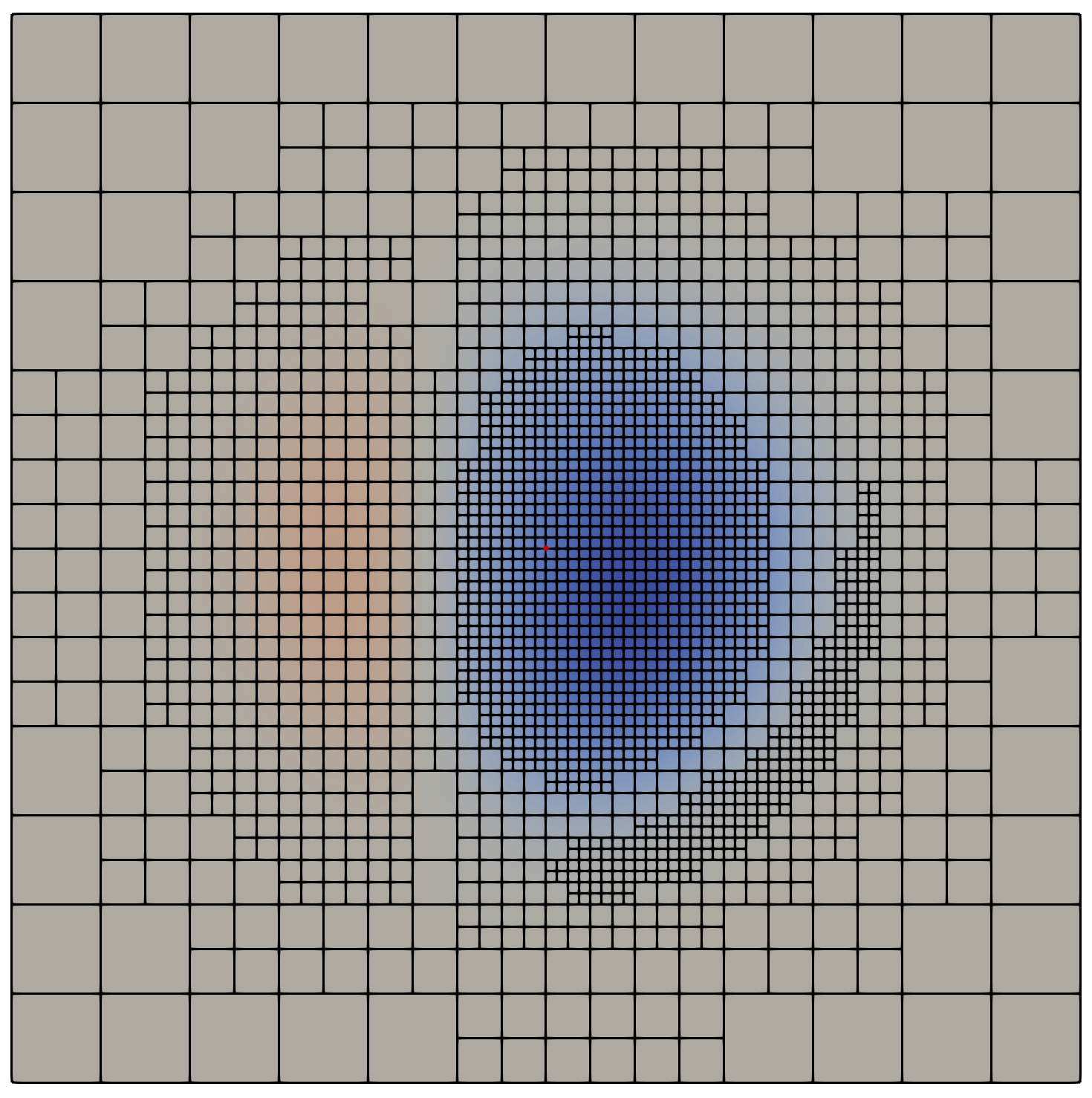}
    \caption{Adaptively generated meshes for example 3. The slices shown are
    the intersections of the mesh with the planes $ y = 0.01$ (top row), $y =
    0.1$, (middle row) and $y = 0.2$ (bottom row) and show the mesh after
    refinement cycles 2 (left hand column), 5 (middle column) and 8 (right
    hand column). The mesh is heavily refined along the $z$ axis due to both
    active boundary constraints and gradient singularities in the solution.}
    \label{fig:3d_meshes}
\end{figure}

\section{Conclusions \& discussion}

In this article, rigorous error estimates were derived for
the scalar Signorini problem in non-energy norms, specifically $\leb
p(\W)$ with $p\in(4,\infty)$, using a duality argument.  The a
posteriori error estimates were benchmarked against known exact
solutions and shown to have the same asymptotic rates as the a
priori analysis, and to be sharp to a satisfactory degree 
(overestimation factor of approximately 20).  Adaptive mesh
refinement was shown to decrease error for a given number of degrees
of freedom compared to uniform meshes, particularly for a
challenging three-dimensional problem.  Furthermore, since
asymptotic convergence is suboptimal in $\leb p(\W)$ for large $p$
we demonstrated that adaptive schemes are able to regain optimal
rates in terms of the number of degrees of freedom.

The error estimate was tested on more challenging test
cases, both in situations where the assumptions of the main theorem
were satisfied, and in a case where they were not (three spatial
dimensions, re-entrant corners).  In the three-dimensional case
where the theory does not hold (see Section
\ref{sec:l4:ex_4} and Figure
\ref{eq:3d_estimate}), we remark that the error estimate is too
optimistic in the sense that the error estimate decreases at the
expected rate under uniform refinement, whereas the error does not
(see Figure \ref{eq:adaptive_uniform_2}).  This means that although
adaptivity did produce superior error reduction in this case, it can
only be used as an indicator, not an estimator.  

This work presents many avenues for further investigation
which include the analysis in higher spatial dimensions, as already
indicated. Further work is underway on nontrivial obstacles as well
as the generalisation to higher polynomial degrees. While we do not
believe the bilateral interpolation result can be true globally,
which is critical to the analysis, we note the recent work
\cite{barrenechea2023nodally} where the authors give an alternative
nodal interpretation that may shed some light on the quest for
higher order methods. 

\section*{Acknowledgements}
B.A. was supported through a PhD scholarship awarded by the EPSRC
Centre for Doctoral Training in the Mathematics of Planet Earth at
Imperial College London and the University of Reading EP\slash
L016613\slash 1. T.P. is grateful for partial support from the EPSRC
programme grant EP\slash W026899\slash 1. Both authors were supported
by the Leverhulme RPG-2021-238.

\begingroup
\setstretch{0.8}
\setlength\bibitemsep{0pt}
\printbibliography
\endgroup


\end{document}